\crefname{hypothesis}{Hypothesis}{Hypotheses}
\title{Function reconstruction using rank-1 lattices and lower sets
\thanks{Submitted to the editors DATE.
\funding{This work was funded by the Fog Research Institute under contract no.~FRI-454.}}}
\author{A. Chkifa, \thanks{um6p (\email{abdellah.chkifa@um6p.ma}).}
\and A. Moussa\thanks{um6p (\email{abdelqoddous.moussa@um6p.ma}).}
}
\DeclareMathOperator{\diag}{diag}
\DeclareMathOperator{\supp} { supp}
\newcommand{\opnorm}{\@ifstar\@opnorms\@opnorm}
\newcommand{\@opnorms}[1]{%
  \left|\mkern-1.5mu\left|\mkern-1.5mu\left|
   #1
  \right|\mkern-1.5mu\right|\mkern-1.5mu\right|
}
\newcommand{\@opnorm}[2][]{%
  \mathopen{#1|\mkern-1.5mu#1|\mkern-1.5mu#1|}
  #2
  \mathclose{#1|\mkern-1.5mu#1|\mkern-1.5mu#1|}
}
\newcommand{\<}{\langle}
\renewcommand{\>}{\rangle}
\newcommand{\be}{\begin{equation}}
\newcommand{\ee}{\end{equation}}
\def\Chi{\raise .3ex \hbox{\large $\chi$}} 
\def\lsima{\hbox{\kern -.6em\raisebox{-1ex}{$~\stackrel{\textstyle<}{\sim}~$}}\kern -.4em}
\def\lsim{\hbox{\kern -.2em\raisebox{-1ex}{$~\stackrel{\textstyle<}{\sim}~$}}\kern -.2em}
\newcommand{\C}{\mathbb{C}}
\newcommand{\N}{\mathbb{N}}
\newcommand{\R}{\mathbb{R}}
\newcommand{\Z}{\mathbb{Z}}
\def\cB{{\cal B}}
\def\cC{{\cal C}}
\def\cD{{\cal D}}
\def\cE{{\cal E}}
\def\cF{{\cal F}}
\def\cK{{\cal K}}
\def\cM{{\cal M}}
\def\cP{{\cal P}}
\def\cS{{\cal S}}
\newcommand{\ba}{{\boldsymbol a}}
\newcommand{\bh}{{\boldsymbol h}}
\newcommand{\bk}{{\boldsymbol k}}
\newcommand{\bi}{{\boldsymbol i}}
\newcommand{\bj}{{\boldsymbol j}}
\newcommand{\bone}{{\boldsymbol 1}}
\newcommand{\bzero}{{\boldsymbol 0}}
\newcommand{\kr}{{\boldsymbol e}}
\newcommand{\bl}{{\boldsymbol l}}
\newcommand{\bt}{{\boldsymbol t }}
\newcommand{\bw}{{\boldsymbol w}}
\newcommand{\bx}{{\boldsymbol x}}
\newcommand{\by}{{\boldsymbol y}}
\newcommand{\bz}{{\boldsymbol z}}
\newcommand{\bsigma }{{\boldsymbol \sigma}}
\begin{document}

\maketitle

\begin{abstract}
This is an example SIAM \LaTeX\ article. This can be used as a
template for new articles.  Abstracts must be able to stand alone
and so cannot contain citations to the paper's references,
equations, etc.  An abstract must consist of a single paragraph and
be concise. Because of online formatting, abstracts must appear as
plain as possible. Any equations should be inline.
\end{abstract}

\begin{keywords}
example, \LaTeX
\end{keywords}

\begin{MSCcodes}
68Q25, 68R10, 68U05
\end{MSCcodes}

\section{Introduction and Main Results}
\label{intro}

In the realm of numerical mathematics and computer science, the challenge of determining the optimal generator vector $\bz$ and associated integer $n$ for a given set $\Lambda$ stands as a fundamental problem with diverse applications. This problem lies at the core of integration, reconstruction, and approximation, playing a pivotal role in algorithms, computational efficiency, and large-scale problem-solving. The quest for efficient solutions to this complex problem has spurred extensive research.

This paper addresses this challenge by presenting  innovative methodologies rooted in mathematical principles. Our focus centers on the intricate domain of rank-1 lattices, offering a robust foundation for the integration and reconstruction of functions supported on finite index sets.

\subsection{Main Results}

The primary contribution of this paper lies in the introduction of new theoretical sharp lower bounds on the minimal number of points needed for reconstruction ($n$) and the development of a novel algorithm for constructing the corresponding optimal generator vector $\bz$ that facilitates the reconstruction of a given set $\Lambda$. We capitalize on the inherent lower structure of $\Lambda$ to optimize our approach, deconstructing the problem into more manageable components. By employing exhaustive search techniques and meticulous verification steps, we ensure the fulfillment of admissibility conditions. Notably, our algorithm surpasses existing methods in terms of computation time and memory usage, making it exceptionally well-suited for tackling lower sets at larger scales.

\subsection{Outline of the Paper}

In the subsequent sections, we delve into the mathematical formulation that underpins our algorithm. We introduce crucial definitions and lemmas, elucidating the theoretical foundations of our approach. In Section~\ref{lower bound}, we establish sharp lower bounds and provide minimality examples for different plans in the general $\Lambda$ set. Section~\ref{methodology} offers an in-depth exploration of our algorithm, delineating the steps involved in the search for the optimal $\bz$ and $n$. We intricately detail the verification process, highlighting the pivotal role played by the lower set structure. Moreover, we validate the efficiency of our algorithm through experimental results in Section~\ref{results}. Finally, our paper concludes in Section~\ref{conclusion}, where we summarize our findings and engage in discussions regarding potential avenues for future research.

\newpage

\subsection {Outline of the paper}
The paper is concerned with integration, reconstruction and approximation.
In \S \ref{sec:pre}

{\bf Notation.} 
As usual $\Z$ and $\R$ denote the sets of integers and real numbers. 
We let $\lfloor x \rfloor := \max \{p\in\Z: p\leq x\}$ be the integer part of $x$ 
and $\lceil x\rceil := \lfloor x \rfloor+1$. We denote by $x_+,x_-$ the positive/negative part 
of $x$, i.e. $x_+ = \max(x,0)$, $x_- = \max(-x,0)$ and by 
$|x|=x_++x_-$ its absolute value. The letter $d$ is always reserved for the 
underlying dimension in $\R^d$, $\Z^d$. Bold symbols are reserved for vectors 
and associated normal symbols for the coordinates; $\bx \in \R^d$ has coordinates 
$x_1,\dots,x_d$. Standard unit vectors are denoted 
by $\kr_1,\dots,\kr_d$ and we use $\bzero$ and $\bone$ to denote the 
zero and all-ones vectors in $\R^d$. Usual operations are meant coordinate-wise 
for vectors, for instance $\bx_+,\bx_-,|\bx|$, $\bx\leq \by$.
The symbol $\bh$, $\bk$ is reserved for vectors of integers while 
$\bx,\by,\bz,\bw$ for vectors of reals. We denote with $\bx \cdot \by$, 
$\bx^\top \by$, or $\<\bx, \by\>$ the usual Euclidean inner product in $\R^d$. 
The norms $\|\cdot\|_p$, $p=2,1,\infty$ are the $\ell_2$-norm, $\ell_1$-norm and 
$\ell_\infty$-norm in $\R^d$, 
$\|\bx\|_2 =\sqrt{\<\bx, \bx\>}$, $\|\bx\|_1 = |x_1|+\dots+|x_d|$ and 
$\|\bx\|_\infty = \max(|x_1|,\dots,|x_d|)$.
For $j\geq1$, we use notation $[j]=\{1,\dots,j\}$. We use leading sub-vector 
notation as follows: for $\bx \in \R^{d}$, and $1\leq j\leq d$,
$\bx_{[j]} := (x_1,\dots,x_j)^\top \in \R^{j}$. 

\newpage

\section{Preliminaries}
\label{sec:pre}
\subsection{Approximation spaces}


We consider the following classical spaces
\be
\cF^{\rm Four}:= \Big\{ f \in L^2\;|\; f: [0,1]^d \mapsto \C, 
f(x)=\sum_{\bk \in \Z^d} {\widehat f}_\bk e_\bk(\bx),~ 
\sum_{\bk \in \Z^d} |{\widehat f}_\bk| < \infty  \Big\},
\ee
\be
\cF^{\rm cos}:= \Big\{ f \in L^2\;|\; f: [0,1]^d \mapsto \R, 
f(x)=\sum_{\bk \in \N_0^d} {\widehat f}_\bk \phi_\bk(\bx),~\sum_{\bk \in \N_0^d} |{\widehat f}_\bk| < \infty  \Big\},
\ee
\be
\cF^{\rm Cheb}:= \Big\{ f \in L^2\;|\; f: [-1,1]^d \mapsto \R, 
f(x)=\sum_{\bk \in \N_0^d} {\widehat f}_\bk \eta_\bk(\bx),~
\sum_{\bk \in \N_0^d} |{\widehat f}_\bk| < \infty  \Big\}.
\ee
The underlying measure is the Lebesgue measure $d\bx$ for the first
two spaces and the tensor product Chebychev measure
$\prod_{j=1}^d \frac{dx_j}{\pi\sqrt{1-x_j^2}}$ for the third space. 
For each space, the basis used in the expansion is a complete 
orthonormal system. They all have a tensor product structure
$e_\bk = \otimes_{j=1}^d e_{k_j}$,
$\phi_\bk = \otimes_{j=1}^d \phi_{k_j}$,
$\eta_\bk = \otimes_{j=1}^d \eta_{k_j}$,
where $e_0\equiv1$, $\phi_0\equiv1$, $\eta_0\equiv1$ 
otherwise $e_{k}(x) = e^{2i k x}$, 
$\phi_{k}(x) = \sqrt2 \cos (\pi k x)$ and $\eta_{k}(x) = \sqrt2 T_k (x)$.
We recall the following 
\begin{itemize}
\item $(e_k)_{k\in\Z}$ is a complete orthonormal 
system for $L^{2} ([0,1],dx)$,
\item $(\phi_{k})_{k\in\N_0}$ is a complete 
orthonormal system for $L^2([0,1],dx)$
\item $(\eta_{k})_{k\in\N_0}$ is a complete 
orthonormal system for $L^2([-1,1],dx/{\pi\sqrt{1-x^2}})$.
\end{itemize}

The first space is the Fourier space, the space of periodic function with 
absolutely converging Fourier series. The second is called the cosine space, 
the space of non-periodic function with absolutely converging cosine series.
The third is the space of functions with absolutely converging Chebyshev series.

Given an arbitrary finite set $\Lambda$ of indices in $\mathbb{Z}^d$ 
(respectively in $\mathbb{N}_0^d$), the subspace $\cF_\Lambda^{\rm Four}$ 
(respectively $\cF_\Lambda^{\rm cos}$, $\cF_\Lambda^{\rm Cheb}$) is 
the space of all functions whose Fourier (respectively cosine, Chebyshev) 
series is supported solely on $\Lambda$,  i.e., $\widehat{f}_\bk=0$
for $\bk \notin \Lambda$. For instance
\be
f \in \cF_{\Lambda}^{\rm Four }: \quad\quad 
f(x)=\sum_{\bk \in \Lambda} 
\widehat{f}_\bk e^{2 \pi \mathrm{i} \bk \cdot \bx}.
\label{Fourier_series}
\ee
\subsection{Approximation requirements}

For the sake of clarity, in this section we will employ a unified 
notation for spaces and bases introduced above. We use notation 
$(\alpha_\bk)_{\bk}$ for the multi-variate complete orthonormal 
systems and use conjugate notation $\overline {\alpha_\bk}$. 
The integral $I(f)$ and inner product $\<f,g\> =I(f\overline{g})$ 
are understood from the context. We will also use $\cF_\Lambda$ 
for truncated spaces, without specifying Fourier, cosine or Chebyshev.

Placing ourselves in one of the three spaces, a cubature (or quadrature) 
is simply defined by $Q_n (f) = \sum_{j=1}^n w_j f(\bt_i)$ for given 
weights $w_j >0$ and given nodes $\bt_i$ belonging to the underlying domain.
One can be interested in the following properties  
\begin{itemize}
\item {\bf Integral exactness}. To demand that the cubature is 
exact for all functions which are supported solely on $\Lambda$, i.e., 
$Q_n(f) = I(f)$ for all $f \in \cF_\Lambda$. This holds if and only if
\be
Q_n(\alpha_\bk) = I(\alpha_\bk) = \delta_{\bk,\bzero}\quad\quad\quad \bk \in \Lambda,
\ee
i.e., the cubature formula integrates exactly all basis functions
$\alpha_\bk$ for $\bk \in \Lambda$.

\item {\bf Function reconstruction}. To demand that an approximation operator
using $Q_n$ reconstruct all functions which are supported solely on $\Lambda$.
We consider 
\be
f^a(\bx)=
\sum_{\bk \in \Lambda} \widehat{f}_\bk^a ~\alpha_\bk(\bx),
\quad\quad
{\widehat f}_\bk^b = Q_n (f \overline {\alpha_{\bk}}),
\ee
where each coefficient 
${\widehat f}_\bk =\< f, \alpha_\bk \> = I(f \overline{\alpha_\bk})$  
is replaced by ${\widehat f}_\bk^a$ obtained by the cubature formula
above. We assume the ``non-aliasing" condition
$$
{\widehat f}_\bk^a={\widehat f}_\bk 
\quad\quad \forall \bk \in \Lambda 
\text { and } f \in \mathcal{F}_\Lambda,
$$
Then the operator defined by $ f\in \cF \mapsto f^a \in F_\Lambda$ 
is a is a reconstruction operator. (If other coefficients 
$\widehat{f}_{\bk'}$ with $\bk' \neq \bk$ contribute to 
$\widehat{f}_\bk^a$ then this is called ``aliasing"). It is 
easily verified ``non-aliasing" holds if and only if 
\be
Q_n (\alpha_\bk \overline {\alpha_{\bk'}}) =
\delta_{\bk, \bk'} =\< \alpha_\bk, \alpha_{\bk'} \>,
\quad\quad\quad \bk, \bk' \in \Lambda.
\ee

\item {\bf Function reconstruction and bi-orthonormality}. 
As far as the reconstruction using cubatures is concerned, 
a different approach based on bi-orthonormality is viable.
More precisely, we assume given
$(\beta_\bk)_{\bk\in\Lambda}$ an 
arbitrary set of functions in $\cF$, which we assume is 
bi-orthonormal to $(\alpha_\bk)_{\bk\in\Lambda}$, i.e. 
$\< \alpha_\bk, \beta_{\bk'}\> = \delta_{\bk,\bk'}$. We 
consider the approximation $f^b$ by
\be
f^b(\bx)=
\sum_{\bk \in \Lambda} {\widehat f}_\bk^b ~\alpha_\bk(\bx),
\quad\quad\quad
{\widehat f}_\bk^b = \frac{Q_n (f \overline {\beta_{\bk}})}{c_\bk},
\ee
for some coefficients $c_\bk \neq 0$. In contrast to approximation 
$f^a$, the coefficient ${\widehat f}_\bk^b$ are not defined using 
the functions $\alpha_k$.
Demanding that this scheme reproduce 
$\cF_\Lambda$, which is also equivalent to 
reproducing all the $\alpha_\bk$ for 
$\bk\in\Lambda$, holds if and only if 
\be
Q_n (\alpha_\bk \overline {\beta_{\bk'}}) =
c_\bk \delta_{\bk, \bk'} 
=c_\bk \< \alpha_\bk, \beta_{\bk'} \>,
\quad\quad\quad \bk, \bk' \in \Lambda.
\ee
\end{itemize}

We can cast the above requirement in a duality 
framework. For integral exactness, one is interested 
in finding a linear form $\alpha_\bzero^* : f \mapsto \alpha_\bzero^*(f)$ 
over $\cF$ satisfying $\alpha_\bzero^*(\alpha_\bk) = \delta_{\bzero,\bk}$
for any $\bk\in\Lambda$, since such form yields $\alpha_\bzero (f) 
= \widehat f_\bzero = I[f]$ for any $f \in \cF_\Lambda$. As for reconstruction, 
one is interested in finding a family of linear forms 
$\alpha_\bh^*: f \mapsto \alpha_\bh^*(f)$ indexed in 
$\Lambda$ satisfying $\alpha_\bh^*(\alpha_\bk) = \delta_{\bh,\bk}$ 
for any $\bh,\bk\in\Lambda$, in which case the operator 
$\sum_{\bh\in\Lambda} \alpha_\bh^*(f)~ \alpha_\bh$
reconstructs all functions in $\cF_\Lambda$. 
The forms may or may not depend on $\Lambda$. The 
linear forms $f\mapsto {\widehat f_\bh}^{\square}$ where $\square \in \{a,b,c\}$ 
introduced later do not have dependence $\Lambda$. They were 
proposed and investigated on \cite{rank1}.

\subsection{Rank-1 lattices rules}

Rank-1 lattices are studied for integration, reconstruction and approximation 
in the Fourier space.
Given the generating vector $\bz \in \Z^d$, the $n$ points of a rank-1 
lattice are specified by
\be
\bt_i=\frac{i \bz \bmod n}{n} \in[0,1]^d \quad \text { for } \quad i=0, \ldots, n-1 .
\ee
For a function $f$ in the Fourier, (respectively Cosine or Chebyshev space), 
the average of function values at the lattice points (respectively 
the {\rm tent-transformed} or {\rm tent-transformed cosine-transformed} lattice points)
are given by 
\be
\frac{1}{n} \sum_{i=0}^{n-1} f\left(\bt_i\right),\quad\quad\quad
\frac{1}{n} \sum_{i=0}^{n-1} f\left(\varphi_{\rm tent}(\bt_i)\right),\quad\quad\quad
\frac{1}{n} \sum_{i=0}^{n-1} f\left(\cos \left(2 \pi \bt_i \right)\right).
\ee
These cubature can be used for approximation objectives discussed 
earlier. The first is called a rank-1 lattice and is used for approximating 
the integral $I(f)=\int_{[0,1]^d} f d\bx$ for $f\in \cF^{Four}$.
Rank-1 lattices have an important property known as the ``character property" 
which states that the cubature sum of the exponential basis functions can only 
take the value of 1 or 0. More precisely 
\be
\frac{1}{n} \sum_{i=0}^{n-1} e^{{\mathrm i}2 \pi  i  {\bh \cdot \bz}/ n}= 
\begin{cases}
1 & \text { if } \bh \cdot \bz \equiv_n 0, \\ 
0 & \text { otherwise }.
\end{cases}
\ee
The character property, and also the property 
$e_\bk \overline{e_\bk'} = e_{\bk - \bk'}$, allows to
reformulate integral exactness and reconstruction 
requirements discussed earlier. 
\begin{lemma} 
Let $\Lambda\subset\Z^d$ be an arbitrary index set. A lattice 
rank-1 rule $Q_n^*$ with $n$ points and generating vector $\bz$ 
\begin{itemize}
\item integrates exactly all functions $f\in\cF^{\rm Four}_\Lambda$
iff
\be
\bh\cdot\bz \not\equiv_n 0\quad\quad \mbox{for all}\quad \bh \in \Lambda \setminus \{\bzero\},
\ee
\item reconstructs exactly the Fourier coefficients of all 
$f\in\cF^{\rm Four}_\Lambda$ by ${\widehat f^a} = Q_n^*(f e_{\bk})$
iff
\be
\bh\cdot\bz \not\equiv_n \bh'\cdot\bz \quad\quad \mbox{for all}\quad 
\bh\neq\bh'  \in \Lambda,
\ee
\end{itemize}
\label{lemmaFourier}
\end{lemma}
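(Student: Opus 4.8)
The plan is to reduce both statements to the two generic characterizations already established in the excerpt—namely that integral exactness for $\cF_\Lambda$ is equivalent to $Q_n(\alpha_\bk) = \delta_{\bk,\bzero}$ for all $\bk \in \Lambda$, and that non-aliasing (hence reconstruction of the Fourier coefficients) is equivalent to $Q_n(\alpha_\bk \overline{\alpha_{\bk'}}) = \delta_{\bk,\bk'}$ for all $\bk,\bk' \in \Lambda$—and then to evaluate the relevant cubature sums explicitly via the character property. Specializing to the Fourier setting we have $\alpha_\bk = e_\bk$ with $I(e_\bk) = \delta_{\bk,\bzero}$, so everything reduces to computing $Q_n^*(e_\bh)$ for integer frequencies $\bh \in \Z^d$.

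First I would compute $Q_n^*(e_\bh) = \frac1n \sum_{i=0}^{n-1} e^{2\pi\mathrm{i}\,\bh\cdot\bt_i}$. The key preliminary observation is that although the lattice point $\bt_i = (i\bz \bmod n)/n$ involves a coordinate-wise reduction modulo $n$, the value $e^{2\pi\mathrm{i}\,\bh\cdot\bt_i}$ is unaffected by it: since $\bh\in\Z^d$, the vector $\big(i\bz - (i\bz \bmod n)\big)/n$ has integer coordinates, so $\bh\cdot\bt_i \equiv i(\bh\cdot\bz)/n \pmod 1$. Hence $Q_n^*(e_\bh) = \frac1n\sum_{i=0}^{n-1} e^{2\pi\mathrm{i}\, i(\bh\cdot\bz)/n}$, and the character property identifies this value with $1$ when $\bh\cdot\bz \equiv_n 0$ and with $0$ otherwise. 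This single evaluation is the engine driving both parts.

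For integral exactness, the characterization requires $Q_n^*(e_\bh) = \delta_{\bh,\bzero}$ for every $\bh\in\Lambda$. The case $\bh=\bzero$ is automatic, since $\bzero\cdot\bz = 0 \equiv_n 0$ gives $Q_n^*(e_\bzero)=1$; for $\bh\in\Lambda\setminus\{\bzero\}$ the requirement $Q_n^*(e_\bh)=0$ reads, by the evaluation above, precisely as $\bh\cdot\bz \not\equiv_n 0$, which is the first stated condition. For reconstruction I would use the multiplicative law $e_\bk\overline{e_{\bk'}} = e_{\bk-\bk'}$ to rewrite the non-aliasing condition as $Q_n^*(e_{\bk-\bk'}) = \delta_{\bk,\bk'}$. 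Again the diagonal $\bk=\bk'$ is automatic, and for $\bk\neq\bk'$ the vanishing $Q_n^*(e_{\bk-\bk'})=0$ is equivalent to $(\bk-\bk')\cdot\bz \not\equiv_n 0$, i.e. $\bk\cdot\bz \not\equiv_n \bk'\cdot\bz$, which is the second stated condition.

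The argument is essentially bookkeeping once the character property is available; the only step demanding genuine care is the preliminary reduction showing that the modular truncation in the definition of $\bt_i$ leaves the integer-frequency exponentials invariant, so that the character property applies verbatim with the integer $\bh\cdot\bz$. The equivalences are then immediate in both directions, because each generic characterization invoked is itself an ``if and only if,'' so no separate converse beyond reading the evaluation both ways is needed.
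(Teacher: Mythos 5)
Your proposal is correct and follows essentially the same route the paper takes: it invokes the generic characterizations $Q_n(\alpha_\bk)=\delta_{\bk,\bzero}$ and $Q_n(\alpha_\bk\overline{\alpha_{\bk'}})=\delta_{\bk,\bk'}$, evaluates the cubature sum via the character property, and uses $e_\bk\overline{e_{\bk'}}=e_{\bk-\bk'}$ to fold reconstruction into the integration case. The paper leaves these steps as an implicit consequence of the preceding discussion, so your write-up (including the careful remark that the coordinate-wise reduction modulo $n$ in $\bt_i$ does not affect integer-frequency exponentials) is simply a fuller version of the same argument.
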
 

Requirements for integral exactness and reconstruction along the same 
lines of Lemma \ref{lemmaFourier} hold with spaces $\cF_\Lambda^{\rm cos}$, 
$\cF_\Lambda^{\rm Cheb}$. The lemma combined with identities
\be
\phi_\bk = 
\frac1{2^{|\bk|_0}}
\sum_{\bsigma \in \cS_\bk} e_{\bsigma(\bk)},\quad\quad
\phi_\bk \phi_{\bk'} =
\frac1{2^{|\bk|_0+|\bk'|_0}}
\sum_{\substack{\bsigma \in \cS_\bk \\ \bsigma' \in \cS_{\bk'}} }
e_{\bsigma(\bk)+\bsigma'(\bk')},
\ee
valid for any $\bk,\bk'\in \N_0^d$, yield the `if' directions in 
Lemma \ref{lemmaCosineExactness} and  
Lemma \ref{lemmaCosineReconstruction} (plan A). 
The `only if' directions also holds and were established in \cite{rank1}.

\begin{lemma} 
Let $\Lambda\subset \N_0^d$ be an arbitrary index set. A tent-transformed lattice 
rule of a rank-1 lattice rule $Q_n^*$ with $n$ points and generating vector $\bz$
integrates exactly all functions $f\in\cF^{\rm cos}_\Lambda$ iff
\be
\bh\cdot\bz \not\equiv_n 0\quad\quad \mbox{for all}\quad \bh \in 
\cM(\Lambda) \setminus \{\bzero\},
\label{requirePlan0}
\ee
\label{lemmaCosineExactness}
\end{lemma}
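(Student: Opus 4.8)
The plan is to reduce the integral-exactness statement for the tent-transformed cosine cubature to the already-established Fourier criterion of Lemma~\ref{lemmaFourier}. The key structural fact is the expansion $\phi_\bk = 2^{-|\bk|_0}\sum_{\bsigma\in\cS_\bk} e_{\bsigma(\bk)}$ stated just before the lemma, which writes each cosine basis function as an average of $2^{|\bk|_0}$ Fourier characters obtained by applying the sign-flips $\bsigma\in\cS_\bk$ to the nonzero coordinates of $\bk$. I expect the set $\cM(\Lambda)$ appearing in the statement to be precisely the collection of all such signed indices $\bsigma(\bk)$ as $\bk$ ranges over $\Lambda$ and $\bsigma$ over $\cS_\bk$; this is the ``mirrored'' set that makes the cosine-space condition equivalent to a Fourier condition on a larger index set. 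So the first step is to make this identification of $\cM(\Lambda)$ explicit.

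\textbf{The ``if'' direction.}
Assume the condition $\bh\cdot\bz\not\equiv_n 0$ for all $\bh\in\cM(\Lambda)\setminus\{\bzero\}$. Integral exactness for $f\in\cF^{\rm cos}_\Lambda$ is equivalent, by linearity, to exactness on each basis function $\phi_\bk$ with $\bk\in\Lambda$, i.e. to showing that the tent-transformed cubature of $\phi_\bk$ equals $I(\phi_\bk)=\delta_{\bk,\bzero}$. Here I would use the defining property of the tent transform: evaluating $\phi_\bk$ at the tent-transformed lattice points reproduces the character-sum evaluation of the Fourier functions $e_{\bsigma(\bk)}$ at the ordinary lattice points, so that the cubature of $\phi_\bk$ equals $2^{-|\bk|_0}\sum_{\bsigma\in\cS_\bk}$ of the rank-1 character sums $\tfrac1n\sum_i e^{2\pi\mathrm{i}\,i\,\bsigma(\bk)\cdot\bz/n}$. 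By the character property each such sum is $1$ if $\bsigma(\bk)\cdot\bz\equiv_n 0$ and $0$ otherwise. For $\bk=\bzero$ every term contributes $1$ and the average is $1$, matching $I(\phi_\bzero)=1$. For $\bk\neq\bzero$, every $\bsigma(\bk)$ lies in $\cM(\Lambda)\setminus\{\bzero\}$, so by hypothesis each character sum vanishes and the total is $0$, as required.

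\textbf{The ``only if'' direction and the main obstacle.}
For the converse one assumes exactness and must derive $\bh\cdot\bz\not\equiv_n 0$ for every $\bh\in\cM(\Lambda)\setminus\{\bzero\}$. The excerpt already flags that this direction was established in \cite{rank1}, so I would cite that for the full rigor; nonetheless the natural argument runs by contradiction. Suppose some $\bh=\bsigma_0(\bk_0)\in\cM(\Lambda)\setminus\{\bzero\}$ satisfies $\bh\cdot\bz\equiv_n 0$. Then in the character-sum expansion for $\phi_{\bk_0}$ at least one term equals $1$, forcing the cubature value to be a strictly positive multiple of $2^{-|\bk_0|_0}$, whereas $I(\phi_{\bk_0})=0$. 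The subtlety—and the step I expect to be the main obstacle—is that different sign-flips $\bsigma$ can send distinct $\bk$ to the \emph{same} character, or a single $\bk_0$ can have several $\bsigma(\bk_0)\cdot\bz\equiv_n 0$, so one must rule out fortuitous cancellations among the $2^{|\bk_0|_0}$ terms. Because all surviving terms are $+1$ and none can be negative, no cancellation is possible within a single $\phi_{\bk_0}$, which resolves the issue; the only genuine care needed is to confirm that $\cS_{\bk_0}$ acts freely enough that the offending index $\bh$ genuinely appears. This positivity of the character-sum terms is exactly what makes the cosine case cleaner than the general reconstruction (plan A) case, and it is the crux of the argument.
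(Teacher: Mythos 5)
Your proposal follows exactly the route the paper takes: the paper derives the ``if'' direction by combining the character property of Lemma~\ref{lemmaFourier} with the identity $\phi_\bk = 2^{-|\bk|_0}\sum_{\bsigma\in\cS_\bk}e_{\bsigma(\bk)}$, and defers the ``only if'' direction to \cite{rank1}, just as you do. Your added observation for the converse --- that all surviving character-sum terms equal $+1$ so no cancellation can hide an offending index $\bh\in\cM(\Lambda)\setminus\{\bzero\}$ with $\bh\cdot\bz\equiv_n 0$ --- is correct and in fact supplies the detail the paper omits.
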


In order to present rigorously the reconstruction results established in \cite{rank1}, 
we need to give an account on the cubature introduced their-in.
Given a rank-1 lattice rule $Q_n^*$  with $n$ points and generating 
vector $\bz$, we associate for $\bk\in\N_0^d$   
\begin{align}
{\widehat f}_\bk^a
&:= Q_n^* \big( (f \circ \varphi_{\text {tent }}) (\phi_\bk \circ \varphi_{\text {tent }}) \big),\\
{\widehat f}_\bk^b
&:= 
Q_n^* \big( (f \circ \varphi_{\text {tent }}) \sqrt{2}^{|\bk|_0} \cos (2 \pi \bk \cdot \bx)\big), \\
{\widehat f}_\bk^c
&:= 
Q_n^* \big( (f \circ \varphi_{\text {tent }}) \sqrt{2}^{|\bk|_0} \cos (2 \pi \bk \cdot \bx)\big)/{c_{\bk}},
\end{align}
where $c_{\bk}:=\# \{ {\bsigma} \in \cS_\bk: \bsigma(\bk) \cdot \bz \equiv_n \bk \cdot \bz \}$.
The reconstruction requirement (plan A) in based on ${\widehat f}_\bk^a$
is relatively straightforward. The requirements can be weakened employing 
the bi-orthonormality trick. By observing that 
\be
\displaystyle
\int_{[0,1]^d} \phi_{\bk'} ( \varphi_{\text {tent }} (\bx)) \sqrt{2}^{|\bk|_0} \cos (2 \pi \bk \cdot \bx) d\bx
=\delta_{\bk',\bk},\quad\quad \bk',\bk\in\N_0^d,
\ee
one can use functions $\beta_\bk = \sqrt{2}^{|\bk|_0} \cos (2 \pi \bk \cdot \bx)$ 
in order to carry the bi-orthonormality trick, which we have briefly discussed.
This observation along the requirements (plan B/C) below where also derived 
and thoroughly discussed in \cite{rank1}. 
\begin{lemma} 
Let $\Lambda\subset\N_0^d$ be an arbitrary index set. A tent-transformed lattice rule 
$Q_n$ of a rank-1 lattice rule $Q_n^*$ with $n$ points and generating vector $\bz$ reconstructs 
exactly the cosine coefficients of all $f\in\cF^{\rm cos}_\Lambda$  

\begin{itemize}
\item (plan A) by ${\widehat f} = {\widehat f^a}$ iff 
\be
\bh\cdot\bz \not\equiv_n \bh'\cdot\bz \quad\quad \mbox{for all}\quad 
\bh\neq\bh'  \in \cM(\Lambda),
\label{requirePlanA}
\ee

\item (plan B) by ${\widehat f} = {\widehat f^b}$ iff 
\be
\sigma(\bh)\cdot\bz \not\equiv_n \bh'\cdot\bz \quad\quad \mbox{for all}\quad 
\bh,\bh'  \in \Lambda,~\sigma \in \cS_{\bh},~ \sigma (\bh)\neq \bh', 
\label{requirePlanB}
\ee

\item (plan C) by ${\widehat f} = {\widehat f^c}$ iff 
\be
\sigma(\bh)\cdot\bz \not\equiv_n \bh'\cdot\bz \quad\quad \mbox{for all}\quad 
\bh,\bh'  \in \Lambda,~ \sigma \in \cS_{\bh}, ~~~\bh \neq \bh', 
\label{requirePlanC}
\ee
\end{itemize}
\label{lemmaCosineReconstruction}
\end{lemma}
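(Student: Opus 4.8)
The plan is to collapse all three equivalences to one computation powered by the character property, exactly as Lemma~\ref{lemmaFourier} handles the Fourier case. Each coefficient functional $f\mapsto{\widehat f}_\bk^\square$, $\square\in\{a,b,c\}$, is linear and $\cF_\Lambda^{\rm cos}$ is spanned by $\{\phi_\bh:\bh\in\Lambda\}$; writing $f=\sum_{\bh\in\Lambda}{\widehat f}_\bh\phi_\bh$ and comparing ${\widehat f}_\bk^\square=\sum_{\bh}{\widehat f}_\bh\,{\widehat{(\phi_\bh)}}_\bk^\square$ with the true coefficient ${\widehat f}_\bk$, plan $\square$ reconstructs every $f\in\cF_\Lambda^{\rm cos}$ if and only if it reproduces each basis function, i.e. ${\widehat{(\phi_\bh)}}_\bk^\square=\delta_{\bh,\bk}$ for all $\bh,\bk\in\Lambda$. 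So the lemma reduces to evaluating these entries and reading off when the diagonal equals $1$ and the off-diagonal vanishes.

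First I would expand each entry into Fourier characters. Composing with $\varphi_{\rm tent}$ and invoking the displayed identities, $\phi_\bh\circ\varphi_{\rm tent}$ is a sum of characters $e_{\bsigma(\bh)}$ over the sign patterns $\bsigma\in\cS_\bh$, while the test function is $\phi_\bk\circ\varphi_{\rm tent}$ for plan A and $\beta_\bk=\sqrt{2}^{|\bk|_0}\cos(2\pi\bk\cdot\bx)=\tfrac12\sqrt{2}^{|\bk|_0}(e_\bk+e_{-\bk})$ for plans B and C. Because products of characters add frequencies, each argument of $Q_n^*$ becomes a combination of characters $e_{\bsigma(\bh)+\bsigma'(\bk)}$ (plan A) or $e_{\bsigma(\bh)\pm\bk}$ (plans B, C), and the character property evaluates $Q_n^*(e_{\bsigma(\bh)+\bsigma'(\bk)})$ as the indicator of $(\bsigma(\bh)+\bsigma'(\bk))\cdot\bz\equiv_n0$ (analogously for B, C). Hence ${\widehat{(\phi_\bh)}}_\bk^\square$ is a nonnegative, suitably normalized count of the sign patterns that realize a congruence between two members of $\cM(\Lambda)$ (plan A) or a congruence $\bsigma(\bh)\cdot\bz\equiv_n\pm\bk\cdot\bz$ (plans B, C); the exact target $\delta_{\bh,\bk}=\langle\phi_\bh,\phi_\bk\rangle$ is the same count with each congruence replaced by an honest vector equality, a target supplied for plans B and C by the bi-orthonormality relation $\int_{[0,1]^d}\phi_{\bk'}(\varphi_{\rm tent}(\bx))\,\beta_\bk\,d\bx=\delta_{\bk,\bk'}$.

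The ``if'' directions then amount to matching the cubature count with the exact count. Condition~\eqref{requirePlanA} separates distinct members of $\cM(\Lambda)$ modulo $n$, so every surviving congruence forces the two members to coincide and no spurious term survives; \eqref{requirePlanB} likewise forces each surviving $\bsigma(\bh)\cdot\bz\equiv_n\pm\bk\cdot\bz$ to be an equality $\bsigma(\bh)=\pm\bk$, and the two counts agree. I expect plan C to be the main obstacle, since \eqref{requirePlanC} forbids collisions only between \emph{distinct} $\bh\neq\bk$ and therefore tolerates, on the diagonal, the self-collisions $\bsigma(\bh)\cdot\bz\equiv_n\bh\cdot\bz$; these are exactly the $c_\bh$ patterns in the definition of $c_\bk$, and the delicate bookkeeping is to show that splitting the two signs and substituting $\bsigma\mapsto-\bsigma$ yields ${\widehat{(\phi_\bh)}}_\bh^b=c_\bh$ while every off-diagonal $b$-entry still vanishes, so that the single division by $c_\bk$ restores ${\widehat{(\phi_\bh)}}_\bk^c=\delta_{\bh,\bk}$.

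The ``only if'' directions are comparatively clean, precisely because each entry is a nonnegative count of congruences and cancellation is impossible: if the relevant separation fails, the witnessing congruence contributes one strictly extra unit to an appropriate entry ${\widehat{(\phi_\bh)}}_\bk^\square$, pushing it above $\delta_{\bh,\bk}$. (That the offending pattern is not already an honest equality uses that $\Lambda\subset\N_0^d$, so a sign flip of $\bh$ can equal $\bk$ only when $\bh=\bk$.) These equivalences and their converses are the reconstruction results established in \cite{rank1}.
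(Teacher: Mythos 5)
Your proposal is correct in outline and follows exactly the route the paper itself indicates: the paper does not actually prove this lemma, but states that the `if' directions follow from the character property together with the identities expressing $\phi_\bk$ and $\phi_\bk\phi_{\bk'}$ as sums of exponentials $e_{\bsigma(\bk)}$, and defers the `only if' directions to \cite{rank1}. Your reduction to reproducing the basis functions, the expansion of each entry as a nonnegative count of congruences, the identification of the diagonal self-collisions with $c_\bk$ for plan~C, and the observation that nonnegativity rules out cancellation in the `only if' direction are all the right ingredients and I see no wrong step; the only caveat is that the plan~B/C bookkeeping (checking that the surviving terms and normalizations give exactly $\delta_{\bh,\bk}$ via the bi-orthonormality relation) is asserted rather than carried out, so as written it is a faithful sketch rather than a complete verification.
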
 

Let us discuss the difference between requirements 
\eqref{requirePlanA}, \eqref{requirePlanB} and 
\eqref{requirePlanC}. First, it is obvious that \eqref{requirePlanA} 
is stronger than \eqref{requirePlanB}. Since $\Lambda \subset \N_0^d$,
then for any $\bh,\bh' \in \Lambda$ and $\sigma\in\cS_h$ one 
has $\sigma (\bh)\neq \bh'$ except if $\bh = \bh'$ and $\sigma = \rm id$. 
Requiring \eqref{requirePlanB} is therefore equivalent to 
requiring \eqref{requirePlanC} and in addition that
$\sigma(\bh)\cdot\bz \not\equiv_n \bh\cdot\bz$ for all 
$\bh\in \Lambda$ and $\sigma \in \cS_{\bh} - \{\rm id\}$. 
In other words, in \eqref{requirePlanC} it is allowed that 
$\bh\cdot\bz$ occur in the set 
$\{\sigma(\bh)\cdot\bz\}_{\sigma \in \cS_{\bh} - \{\rm id\}}$.

For latter use, we introduce the proper notation for admissibility with 
plan A, B and C. Given 
$\Lambda \subset \N_0^d$ an arbitrary set of indices, we let $\cP_0(\Lambda)$
be the set of couples $(n,\bz)\in \N \times \Z^d$ satisfying \eqref{requirePlan0}.
Similarly, we introduce $\cP_{A}(\Lambda)$, $\cP_{B}(\Lambda)$ and $\cP_{C}(\Lambda)$ to be the sets of couples $(n,\bz)\in \N \times \Z^d$ satisfying \eqref{requirePlanA}, \eqref{requirePlanB}, and \eqref{requirePlanC} respectively.
We say that $(n,\bz)$ is admissible for plan $\square$ if
$(n,\bz)\in \cP_{\square}(\Lambda)$.

The discussion above shows that $\cP_{A}(\Lambda) \subset \cP_{B}(\Lambda) \subset \cP_{C}(\Lambda)$.
For the sake of completeness, we reformulate in the following proposition.
\begin{proposition} 
\label{lemmaImplications}
Let $\Lambda \subset \N_0^d$ finite, the following implications hold 
\begin{itemize}
    \item[$$] $(n,\bz)$ is admissible for plan A 
    \item[$\implies$] $(n,\bz)$ is admissible for plan B
    \item[$\implies$] $(n,\bz)$ is admissible for plan C
\end{itemize}
\end{proposition}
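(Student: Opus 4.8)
The statement merely repackages the inclusions $\cP_A(\Lambda) \subset \cP_B(\Lambda) \subset \cP_C(\Lambda)$ recorded in the preceding discussion, so the plan is to verify each inclusion directly from the three requirements \eqref{requirePlanA}, \eqref{requirePlanB} and \eqref{requirePlanC}. The single structural fact that drives both implications is the following consequence of $\Lambda \subset \N_0^d$: for $\bh \in \N_0^d$ and $\sigma \in \cS_\bh$, the vector $\sigma(\bh)$ lies in $\N_0^d$ if and only if $\sigma = \mathrm{id}$, since flipping the sign of any nonzero coordinate of $\bh$ produces a negative entry. Equivalently, for $\bh,\bh' \in \N_0^d$ one has $\sigma(\bh) = \bh'$ precisely when $\bh = \bh'$ and $\sigma = \mathrm{id}$. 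I would isolate this observation first.

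For the implication A $\implies$ B, I would fix an arbitrary triple $(\bh, \bh', \sigma)$ admissible in the quantifier of \eqref{requirePlanB}, that is $\bh, \bh' \in \Lambda$, $\sigma \in \cS_\bh$ and $\sigma(\bh) \neq \bh'$. Since $\cM(\Lambda)$ is the union of the sign-orbits of the elements of $\Lambda$, both $\sigma(\bh)$ and $\bh' = \mathrm{id}(\bh')$ belong to $\cM(\Lambda)$, and they are distinct by hypothesis. Applying \eqref{requirePlanA} to this distinct pair in $\cM(\Lambda)$ yields $\sigma(\bh) \cdot \bz \not\equiv_n \bh' \cdot \bz$, which is exactly the constraint demanded by \eqref{requirePlanB}. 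As the triple was arbitrary, plan A admissibility forces plan B admissibility.

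For B $\implies$ C, I would argue that the family of constraints of \eqref{requirePlanC} is contained in that of \eqref{requirePlanB}. Concretely, take $\bh, \bh' \in \Lambda$ with $\bh \neq \bh'$ and $\sigma \in \cS_\bh$, as quantified in \eqref{requirePlanC}. By the structural observation, $\bh \neq \bh'$ already guarantees $\sigma(\bh) \neq \bh'$, so the same triple satisfies the quantifier of \eqref{requirePlanB}; hence \eqref{requirePlanB} supplies $\sigma(\bh) \cdot \bz \not\equiv_n \bh' \cdot \bz$, which is the assertion of \eqref{requirePlanC}. Thus plan B admissibility implies plan C admissibility, completing the chain.

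The argument is entirely elementary and I anticipate no serious obstacle; the only point requiring care is the non-negativity observation, which is what makes the quantifier ``$\sigma(\bh) \neq \bh'$'' in plan B collapse to ``$\bh \neq \bh'$ or $\sigma \neq \mathrm{id}$'' and thereby separates plan B from plan C exactly by the extra constraints $\sigma(\bh) \cdot \bz \not\equiv_n \bh \cdot \bz$ for $\sigma \neq \mathrm{id}$, as already noted in the text preceding the proposition.
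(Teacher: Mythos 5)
Your proof is correct and follows essentially the same route as the paper: the paper establishes the proposition via the discussion immediately preceding it, which rests on exactly your two observations — that $\sigma(\bh)$ and $\bh'$ are distinct elements of $\cM(\Lambda)$ (giving A $\implies$ B), and that for $\bh,\bh'\in\Lambda\subset\N_0^d$ one has $\sigma(\bh)\neq\bh'$ unless $\bh=\bh'$ and $\sigma=\mathrm{id}$ (giving B $\implies$ C). Your write-up is simply a more explicit version of that argument, with the quantifier bookkeeping spelled out.
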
 
%

\begin{remark}
\label{Rem: admissibility}
We introduce notation  $\cS_{\bh}^* = \cS_{\bh}-\{\rm id\}$,
and note that $\cS_{\bzero} = \{\rm id\}$ and $\cS_{\bzero}^* = \emptyset$.
Since $\Lambda \subset \N_0^d$, one can easily verify that the extra 
requirement on admissibility condition of every plan with respect to others 
is as outlined below. By O-O, we simply mean $\bh\in\Lambda\mapsto\bh\cdot\bz$ 
is one-to-one.
$$
\begin{array}{|l l|c|c|c|c|}
\hline
\text{Requirement}  & & \text{O-O} & \text{C} & \text{ B} & \text{A} \\ \hline
\bh'\cdot\bz \not \equiv_n \bh\cdot\bz & \bh',\bh  \in \Lambda,~ \bh'\neq \bh 
& \checkmark & \checkmark & \checkmark & \checkmark \\
\bh'\cdot\bz \not \equiv_n \sigma(\bh)\cdot\bz & \bh',\bh  \in \Lambda,~ \bh'\neq \bh,~\sigma \in \cS_{\bh}^*
&  & \checkmark & \checkmark&\checkmark  \\
 \bh\cdot\bz~\not  \equiv_n \sigma(\bh)\cdot\bz & \bh  \in \Lambda,~\sigma \in \cS_{\bh}^* 
&  &  &\checkmark&\checkmark \\
\sigma'(\bh')\cdot\bz \not\equiv_n \sigma(\bh)\cdot\bz &  \bh',\bh  \in \Lambda,~ \bh'\neq \bh,~\sigma \in \cS_{\bh}^*
,~\sigma' \in \cS_{\bh'}^*. 
&  &&&\checkmark  \\  \hline    
\end{array}
$$
\end{remark}

\newpage
For any given $\bz\in \Z^d$, we let $n^*_0(\Lambda,\bz)$ be the smallest possible integer 
$n$ s.t. $(n,\bz) \in \cP_{0}(\Lambda)$. By convention $n^*_0(\Lambda,\bz)=+\infty$ if 
no such $n$ exist, e.g. the case $\bz=\bzero$. We define similarly the integers 
$n^*_A(\Lambda,\bz)$, $n^*_B(\Lambda,\bz)$ and $n^*_C(\Lambda,\bz)$. 

We introduce a dedicated notation $n^*_0(\Lambda)$ for the smallest possible integer $n$ 
such that $(n,\bz) \in \cP_{0}(\Lambda)$ for some $\bz$. We define similarly the notation 
$n^*_A(\Lambda)$, $n^*_B(\Lambda)$ and $n^*_C(\Lambda)$. Obviously 
$n^*_{\square}(\Lambda) \geq2$ where $\square$ is $0$ or A/B/C. Upper 
bounds $n_{\square}^*(\Lambda) \leq p_{\square}^*(\Lambda)$ are provided in 
\cite{rank1}. Namely 
\be
\label{eq: upper bounds}
\begin{array} {cl}
p_0^*(\Lambda) &\text{smallest prime }\geq \max \left\{{\#(\cM(\Lambda)\backslash \{0\}}+2, 2q\right\} /2, \\
p_A^*(\Lambda) &\text{smallest prime }\geq \max \left\{{\#(\cM(\Lambda)\oplus \cM(\Lambda))}+2, 2q\right\} /2, \\
p_B^*(\Lambda) &\text{smallest prime }\geq \max \left\{{\#(\Lambda\oplus \cM(\Lambda))}, 2 q\right\}, \\
p_C^*(\Lambda) &\text{smallest prime }\geq \max \left\{{\#(\Lambda) \#(\cM(\Lambda))}, 2q \right\}, 
\end{array}
\ee
where $q=\max \{\|\bk\|_{\infty}: \bk \in\Lambda\}$. 

In the next section, we derive new lower bounds on $n$, the number of rank-$1$ lattice 
points needed for exact integration and the exact reconstruction using plan A, plan B, or plan C. 
%
%
%

%
%
%

\newpage
\section{Necessary condition for the exact reconstruction}
\label{sec:lower-bound}
\subsection{Necessary condition for plans A/B/C}

In this section, we derive lower bounds on the numbers
$n^*_A(\Lambda)$, $n^*_B(\Lambda)$ and $n^*_C(\Lambda)$ of
minimal rank-$1$ lattice points needed for exact reconstruction 
over $\cF^{\rm cos}_\Lambda$ using plan A, plan B, or plan C. 
\begin{lemma}
\label{Th_necessary_A}
We let $\Lambda\subset\N_0^d$ be an arbitrary index set of 
finite cardinality and denote $m=\#(\cM({\Lambda}))$. For any 
value $n < m$, there exists no $\bz\in\mathbb{Z}^d$ s.t.
\be
\bh \cdot \bz \not \equiv_n \bh' \cdot \bz 
\quad \mbox{ for all } 
\bh\neq \bh' \in  \cM(\Lambda).
\ee 
If $\bzero\not\in\Lambda$, the conclusion also holds for $n=m$. 
\end{lemma}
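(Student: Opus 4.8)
The plan is to reinterpret the displayed requirement as the injectivity of a single residue map and then argue by cardinality, treating the boundary value $n=m$ as a separate, slightly finer case. For a fixed $\bz \in \Z^d$ I would introduce
$$
\psi_\bz : \cM(\Lambda) \longrightarrow \Z/n\Z, \qquad \bh \longmapsto (\bh \cdot \bz) \bmod n,
$$
and observe that the condition ``$\bh\cdot\bz \not\equiv_n \bh'\cdot\bz$ for all $\bh \neq \bh' \in \cM(\Lambda)$'' is, verbatim, the injectivity of $\psi_\bz$. Since $\#\cM(\Lambda)=m$ and $\#(\Z/n\Z)=n$, an injection can exist only when $n \geq m$; hence for every $n<m$ and every $\bz$ the map $\psi_\bz$ fails to be injective, which settles the first assertion. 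This step is a plain pigeonhole argument and presents no difficulty.

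For the case $n=m$ under the hypothesis $\bzero \notin \Lambda$, I would first record two structural features of the mirrored set $\cM(\Lambda)$, which collects the sign patterns $\bsigma(\bk)$ of $\bk\in\Lambda$, $\bsigma\in\cS_\bk$: it is stable under the involution $\bh\mapsto -\bh$ (flip every chosen sign), and $\bsigma(\bk)=\bzero$ forces $\bk=\bzero$, so that $\bzero\in\cM(\Lambda)$ iff $\bzero\in\Lambda$; in particular $\bzero\notin\cM(\Lambda)$ here. I would also note that $\psi_\bz$ is odd, $\psi_\bz(-\bh) = -\psi_\bz(\bh)$ in $\Z/n\Z$.

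Then I would argue by contradiction: suppose some $\bz$ makes $\psi_\bz$ injective with $n=m$. As $\#\cM(\Lambda)=m=n=\#(\Z/n\Z)$, such a $\psi_\bz$ is forced to be a bijection, so the class $0$ has a preimage $\bh_0\in\cM(\Lambda)$. By negation-stability of $\cM(\Lambda)$ and oddness of $\psi_\bz$, the vector $-\bh_0\in\cM(\Lambda)$ is a second preimage of $0$, so injectivity forces $\bh_0=-\bh_0$, i.e.\ $\bh_0=\bzero$, contradicting $\bzero\notin\cM(\Lambda)$. I expect this involution-plus-bijection step to be the only nontrivial point: it is exactly the assumption $\bzero\notin\Lambda$ that removes the unique fixed point of $\bh\mapsto -\bh$ and thereby sharpens the bound from $n<m$ to $n\leq m$. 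Everything else is bookkeeping about $\cM(\Lambda)$ and the meaning of admissibility.
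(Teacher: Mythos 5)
Your proposal is correct and follows essentially the same route as the paper: pigeonhole for $n<m$, and for $n=m$ the observation that an injective residue map must hit $0$, whose preimage $\bh_0$ then collides with $-\bh_0\in\cM(\Lambda)$ unless $\bh_0=\bzero$, which is excluded when $\bzero\notin\Lambda$. The only cosmetic difference is that you invoke ``injection between equal finite cardinalities is a bijection'' where the paper splits into the two cases of the image being proper or full; the substance is identical.
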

\begin{proof}
We consider $\bz\in\Z^d$ arbitrary and the associated map 
$$\bh \in \cM(\Lambda) \mapsto \bh \cdot \bz ~{\rm mod}~ n \in \{0,\dots,n-1\}.$$
If $n < \#(\cM({\Lambda}))$, then by the Pigeonhole principle the map can not be injective,
which proves the first claim. We assume $n = \#(\cM({\Lambda}))$ but 
$\bzero\not\in\Lambda$. Two situations are to be considered. If the image
of $\cM({\Lambda})$ is strictly included in $\{0,\dots,n-1\}$, then again by the 
Pigeonhole principle the map can not be injective. Otherwise, all the remainders 
$0,1,\cdots,n-1$ are obtained by the map. In particular, there exists 
$\bh\in \cM({\Lambda})$ satisfying $\bh \cdot \bz \equiv_n 0$,
and by assumption 
$\bh \neq \bzero$. This implies that $\bh \cdot \bz \equiv_n -\bh \cdot \bz$ 
which again make the map not injective since $-\bh\in \cM({\Lambda})$ and $\bh\neq -\bh$.
\end{proof}

\begin{lemma}
\label{Th_necessary_B}
We let $\Lambda\subset\N_0^d$ be an arbitrary index set of 
finite cardinality. Then, 
\begin{itemize}
\item for any value $n < \#(\{\bh,-\bh\}_{\bh\in\Lambda})$, there 
exists no $\bz\in\mathbb{Z}^d$ s.t.
\be
\sigma(\bh)\cdot\bz \not\equiv_n \bh'\cdot\bz \quad\quad \mbox{for all}\quad 
\bh,\bh'  \in \Lambda,~\sigma \in \cS_{\bh},~ \sigma (\bh)\neq \bh',
\label{requirementB}
\ee
and if $\bzero\not\in\Lambda$, the conclusion also holds for $n=\#(\{\bh,-\bh\}_{\bh\in\Lambda})$,
\item for any value $n < \#(\Lambda)$, there exists no 
$\bz\in\mathbb{Z}^d$ s.t.
\be
\sigma(\bh)\cdot\bz \not\equiv_n \bh'\cdot\bz \quad\quad \mbox{for all}\quad 
\bh,\bh'  \in \Lambda,~ \sigma \in \cS_{\bh}, ~~~\bh \neq \bh'. 
\label{requirementC}
\ee
\end{itemize}

\end{lemma}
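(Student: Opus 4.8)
The plan is to reduce both claims to an injectivity (Pigeonhole) argument, exactly in the spirit of Lemma~\ref{Th_necessary_A}: in each case I exhibit a finite set $S\subset\Z^d$ on which the stated requirement forces the map $\bg\mapsto \bg\cdot\bz \bmod n$ into $\{0,\dots,n-1\}$ to be injective, and then conclude that $n\geq \#S$, i.e. that no admissible $\bz$ can exist once $n<\#S$. I would dispose of the second bullet (plan C) first, since it is immediate: specializing \eqref{requirementC} to $\sigma=\mathrm{id}\in\cS_\bh$ gives $\bh\cdot\bz\not\equiv_n\bh'\cdot\bz$ for all $\bh\neq\bh'\in\Lambda$, which says precisely that $\bh\mapsto\bh\cdot\bz \bmod n$ is injective on $\Lambda$. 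Hence $\#\Lambda\leq n$, and for $n<\#\Lambda$ no $\bz$ can satisfy \eqref{requirementC}.

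For the first bullet (plan B) the correct set is the full symmetrization $S=\Lambda\cup(-\Lambda)=\{\bh,-\bh\}_{\bh\in\Lambda}$, and the core step is to show that \eqref{requirementB} forces $\bg\mapsto\bg\cdot\bz \bmod n$ to be injective on $S$. I would prove this by a case analysis on two distinct elements $\bg_1,\bg_2\in S$, writing each as $\pm\bh$ with $\bh\in\Lambda$. Here I use two structural facts: first, since $\Lambda\subset\N_0^d$, the sign change $\sigma\in\cS_\bh$ that flips \emph{all} nonzero coordinates satisfies $\sigma(\bh)=-\bh$ (zero coordinates being unaffected in both $\sigma(\bh)$ and $-\bh$), so every element of $S$ is of the form $\sigma(\bh)$ for suitable $\bh\in\Lambda$, $\sigma\in\cS_\bh$; and second, $a\equiv_n b \iff -a\equiv_n -b$. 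Concretely: if $\bg_1=\bh_1,\bg_2=\bh_2$ are both in $\Lambda$, take $\sigma=\mathrm{id}$; if $\bg_1=\bh_1\in\Lambda$ and $\bg_2=-\bh_2$, apply \eqref{requirementB} with $\bh=\bh_2$, $\sigma$ the full flip (so $\sigma(\bh_2)=-\bh_2=\bg_2$) and $\bh'=\bh_1$, the side condition $\sigma(\bh)\neq\bh'$ becoming exactly $\bg_2\neq\bg_1$; and the case $\bg_1=-\bh_1,\bg_2=-\bh_2$ reduces to the first case by negating both sides mod $n$. In every case $\bg_1\neq\bg_2$ yields $\bg_1\cdot\bz\not\equiv_n\bg_2\cdot\bz$, so the map is injective on $S$ and $n\geq\#S$; thus no $\bz$ exists when $n<\#S$.

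It remains to handle the boundary value $n=\#S$ under the hypothesis $\bzero\notin\Lambda$, which forces $\bzero\notin S$ (as $-\bzero=\bzero$). Suppose for contradiction that some $\bz$ satisfies \eqref{requirementB}. By the previous paragraph the map is injective on $S$, and since $\#S=n$ it is a bijection onto $\{0,\dots,n-1\}$; in particular some $\bg\in S$ satisfies $\bg\cdot\bz\equiv_n 0$. Because $\bzero\notin S$ we have $\bg\neq\bzero$, hence $-\bg\neq\bg$, while $-\bg\in S$ and $(-\bg)\cdot\bz\equiv_n 0$ as well, contradicting injectivity. I expect the main obstacle to be purely organizational: the bookkeeping in the sign case analysis, and in particular checking that the side condition $\sigma(\bh)\neq\bh'$ of \eqref{requirementB} translates exactly into $\bg_1\neq\bg_2$ in each case together with the zero-coordinate subtlety ensuring $\sigma(\bh)=-\bh$ for the full flip. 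Once this matching is verified carefully, the Pigeonhole conclusion and the boundary argument are routine.
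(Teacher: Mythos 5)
Your proposal is correct and follows essentially the same route as the paper: plan C via $\sigma=\mathrm{id}$ and the Pigeonhole principle, plan B via injectivity of $\bg\mapsto\bg\cdot\bz\bmod n$ on the symmetrized set $\Lambda^{\pm}=\{\bh,-\bh\}_{\bh\in\Lambda}$ (using $\sigma=\mathrm{id}$, the full sign flip, and negation mod $n$), and the boundary case $n=\#(\Lambda^{\pm})$ handled by the surjectivity-onto-residues argument already used for Lemma~\ref{Th_necessary_A}. Your explicit check that the side condition $\sigma(\bh)\neq\bh'$ reduces to $\bg_1\neq\bg_2$ is a slightly more careful rendering of what the paper states in passing, but the substance is identical.
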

\begin{proof}
The requirement \eqref{requirementB} implies that $\bh \cdot\bz \not\equiv_n \bh'\cdot\bz$ 
for all $\bh \neq \bh'  \in \Lambda$ (by letting $\sigma = id$) and it also implies 
$- \bh \cdot\bz \not\equiv_n \bh'\cdot\bz$ for all $\bh, \bh'  \in \Lambda$ (by 
letting $\sigma = -id$ and checking that $-\bh \neq \bh'$ holds except if $\bh= \bh' = \bzero$).
Also, since $x \equiv_n y$ 
if and only if $-x \equiv_n -y$, the requirement implies 
$- \bh\cdot\bz \not\equiv_n -\bh'\cdot\bz$ for all $\bh \neq \bh'  \in \Lambda$. We 
introduce the set $\Lambda^{\pm} = \{\bh,-\bh\}_{\bh\in\Lambda}$. 
We therefore verify that the requirement implies the map 
$\bh \in \Lambda^{\pm} \mapsto \bh \cdot \bz ~{\rm mod}~ n$
is injective. But, this can not hold if $n < \#(\Lambda^{\pm})$. 
If $\bzero\not\in\Lambda$, the same proof used for 
Lemma \ref{Th_necessary_A} applies since as for $\cM(\Lambda)$, the set 
$\Lambda^{\pm}$ satisfies $\bh\in\Lambda^{\pm}\Longrightarrow -\bh\in\Lambda^{\pm}$.
Requirement \eqref{requirementC} implies clearly that the map 
$\bh \in \Lambda \mapsto \bh \cdot \bz ~{\rm mod}~ n$ is injective
(by letting $\sigma = id$). We can not have this if $n < \#(\Lambda)$.


\end{proof}


The lemmas provide lower bounds on the number of points that are 
necessary to perform exact reconstruction over $\cF^{\rm cos}_\Lambda$ 
using plan A, plan B, and plan C. In particular, we have proved 
for any set $\Lambda$
\be
\label{eq: lower bounds}
\begin{array} {l}
n^*_A(\Lambda) \geq l^*_A(\Lambda) :=  \#(\cM({\Lambda})) + 1_{\bzero\not\in\Lambda}, \\
n^*_B(\Lambda) \geq l^*_B(\Lambda) := 2\#(\Lambda) + \delta_{\Lambda,\bzero}, \\
n^*_C(\Lambda) \geq l^*_C(\Lambda) :=\#(\Lambda). 
\end{array}
\ee
We have used that $\#(\Lambda^{\pm})$ is equal to 
$2 \#(\Lambda)-1$ if $\bzero\in\Lambda$ and to
$2 \#(\Lambda)$ otherwise, and introduced accordingly 
$\delta_{\Lambda,\bzero}$ equal to 1 or -1 depending 
if $\bzero\not\in\Lambda$ or $\bzero\in\Lambda$.

{\color{red}
In the subsequent subsection, we demonstrate that $l^*_A(\Lambda) = \#(\cM({\Lambda})) + 
1_{\bzero\not\in\Lambda}$ is an optimal lower bound, as it is attained for all dimensions for 
$\Lambda$ of block type. It is worth noting that while the other lower bounds are not optimal, 
they provide a favorable initial guess to initiate algorithms. 
}

\subsection{Sharpness of the necessary condition}
In this section, we use notation 
$\cS_{\bone,k}=\{\bh\in\N_0^2: h_1 + h_2 \leq k\}$
for the simplex in dimension $2$, parametrized by $k\in\N$. The 
associated space ${\rm span}\{x_1^{h_1}x_2^{h_2}\}_{\bh\in\cS_{\bone,k}}$
is the so-called space of bivariate polynomials of total degree less or equal than $k$. 
We establish in this section that full reconstruction is achievable using plan A, 
with exactly $n=\#(\cM(\cS_{\bone,k}))$ and well chosen generators 
$\bz$ of the rank-1 lattice. 

\subsubsection{Sharpness for the simplex in dimension 2 }

The simplex $\cS_{\bone,k}$ consists in indices $\bh$ s.t. 
$h_1=l$ and $h_2\leq k-l$ for $l=0,\dots k$. Its cardinality is 
therefore equal to $\sum_{l=0}^k (k-l+1) = (k+2)(k+1)/2$. The 
mirrored simplex is given by $\cM(\cS_{\bone,k})=\{\bh\in\Z^2: |h_1| + |h_2| \le k\}$. 
It consists in indices $\bh$ s.t. $h_1=\pm l$ and $|h_2|\leq k-l$ for $l=0,\dots k$.
The cardinality of $\cM(\Lambda)$ is therefore equal to 
$(2k+1) + 2 \sum_{j=0}^{k-1} (2j+1) =(2k+1)+2k^2$. 
\begin{proposition}
Let $k\in\N$, $\cS_{\bone,k}$ be associated simplex and $n=\#(\cM(\cS_{\bone,k}))$. 
With $\bz=(1,2k+1)$ there holds 
\be
\bh \cdot \bz \not \equiv_n \bh' \cdot \bz 
\quad \mbox{ for all } 
\bh\neq \bh' \in  \cM(\cS_{\bone,k}).
\ee 
\end{proposition}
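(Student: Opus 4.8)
The plan is to prove the statement by showing directly that the linear form $\bh \mapsto \bh\cdot\bz \bmod n$ is injective on $\cM(\cS_{\bone,k})$, which is precisely the non-aliasing congruence to be verified. With $\bz = (1,2k+1)$ this form is $\bh\cdot\bz = h_1 + (2k+1)h_2$. Given two distinct indices $\bh,\bh' \in \cM(\cS_{\bone,k})$, I would introduce $a := h_1 - h_1'$ and $b := h_2 - h_2'$, so that the congruence to be excluded becomes
\[
a + (2k+1)\,b \equiv_n 0, \qquad (a,b)\neq(0,0),\qquad n = 2k^2 + 2k + 1.
\]

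The structural key is the description $\cM(\cS_{\bone,k}) = \{\bh : |h_1|+|h_2|\le k\}$, so that both indices satisfy $|h_1|+|h_2|\le k$. The triangle inequality then yields the crucial diameter bound $|a|+|b| \le (|h_1|+|h_2|) + (|h_1'|+|h_2'|) \le 2k$. From this I would deduce, for $v := a + (2k+1)b$, the estimate $|v| \le (2k+1)(|a|+|b|) \le 2k(2k+1) = 4k^2 + 2k < 2n$. Consequently the only multiples of $n$ that $v$ could possibly equal are $-n$, $0$, and $n$, and the whole problem reduces to excluding these three values under the constraints $|a|+|b|\le 2k$ and $(a,b)\neq(0,0)$.

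The value $v=0$ is disposed of immediately: if $b\neq 0$ then $|a| = (2k+1)|b| \ge 2k+1 > 2k$, contradicting the diameter bound; hence $b=0$, forcing $a=0$, which contradicts $(a,b)\neq(0,0)$. The heart of the argument is the case $v = \pm n$, and by the symmetry $(a,b)\mapsto(-a,-b)$ it suffices to treat $v=n$. Here I would exploit the division identity $n = (2k+1)\,k + (k+1)$ to analyze the integer solutions $a = n - (2k+1)b$ of $a + (2k+1)b = n$. Splitting into the ranges $b\le 0$, $1\le b\le k$, and $b\ge k+1$, and computing $|a|+|b|$ in each (it equals $n-(2k+2)b$, $n-2kb$, and $(2k+2)b-n$ respectively), I expect to find that the minimum of $|a|+|b|$ over all solutions is exactly $2k+1$, attained only at $(a,b)=(k+1,k)$ and $(a,b)=(-k,k+1)$. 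Since $2k+1 > 2k$, no solution is compatible with the diameter bound, and the proof is complete.

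The main obstacle, and the only place where the specific choice $\bz=(1,2k+1)$ and the sharp value $n=\#(\cM(\cS_{\bone,k}))$ genuinely enter, is this last step: the generator is tuned so that the shortest $\ell_1$ representation of $\pm n$ in the form $a+(2k+1)b$ just barely exceeds the diameter $2k$ imposed by the simplex. Everything preceding it is soft — a triangle inequality and a coarse magnitude bound reducing matters to finitely many residues — whereas the arithmetic of the case $v=\pm n$ is where the optimality of the count is actually forced.
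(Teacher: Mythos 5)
Your proposal is correct, and its skeleton coincides with the paper's: both bound $v=\bh\cdot\bz-\bh'\cdot\bz$ by $4k^2+2k<2n$ to reduce to $v\in\{-n,0,n\}$, dispose of $v=0$ by the same divisibility observation, and use the $(a,b)\mapsto(-a,-b)$ symmetry to treat only $v=n$. The one place you genuinely diverge is the treatment of $v=\pm n$. The paper works with the individual coordinates: it shows $h_1'-h_1+k$ must be a multiple of $2k+1$, extracts the two candidate difference vectors $(h_1'-h_1,h_2'-h_2)\in\{(-k,k+1),(k+1,-k)\}$, and then rules each out by a sign analysis ($h_1\geq 0$, $h_2\leq -1$, etc.) feeding into the $\ell_1$ constraint on $\bh$ and $\bh'$ separately. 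You instead record the full diameter bound $|a|+|b|\leq 2k$ (the paper only ever uses the weaker $|a+b|\leq 2k$ in deriving $|q|<2$) and compute that every integer solution of $a+(2k+1)b=n$ has $|a|+|b|\geq 2k+1$, with equality exactly at $(k+1,k)$ and $(-k,k+1)$; your three-range computation of $|a|+|b|$ is correct and closes the case in one stroke, with no sign analysis of the original coordinates. Your route is somewhat cleaner and makes the sharpness mechanism explicit --- the minimal $\ell_1$-representation of $\pm n$ in the lattice $\Z(1,0)+\Z(0,2k+1)$ exceeds the $\ell_1$-diameter of $\cM(\cS_{\bone,k})$ by exactly one --- whereas the paper's version, though correct, retraces information (the candidate difference vectors already violate $|a|+|b|\leq 2k$ on sight) that the weaker bound it carries does not let it exploit directly.
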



\begin{proof} 
We assume that $\bh \cdot \bz \equiv_n \bh' \cdot \bz$ 
for two distinct elements $\bh$, $\bh'$ in $\cM(\Lambda)$.
We have 
$h_1'+h_2'(2k+1) =  h_1+h_2(2k+1)+n q$ for some integer $q$, which we 
reformulate as $nq =(h_1'+h_2')-(h_1+h_2) +2k(h_2'-h_2)$, hence 
$n|q| \leq k+k+2k \times 2k = 4k^2 + 2k$. In view of 
$n=2k^2 + 2k+1$, we deduce that $|q| < 2$. 
We will first exclude $q=\pm 1$. We remark that up to consider 
$-\bh$ and $-\bh'$, excluding $q=-1$ is equivalent to excluding 
$q=1$. In the latter case, we reformulate as 
$h_1' -  h_1 + k = (h_2 - h_2')(2k+1)+ (k+1)(2k+1)$, hence
$(h_1' -  h_1 + k)$ is a multiple of $(2k+1)$.
Since $|h_1|\leq k$ and $|h_1'|\leq k$, then necessarily 
$h_1'-h_1 = - k $ or $ h_1'-h_1= k+1$, to which correspond
$h_2' - h_2= k+1$ and $h_2' - h_2= -k$ respectively.
In the first case, necessarily $h_1\geq0$ and $h_2\leq-1$ in order to have 
$\bh' \in \cM(\Lambda)$, implying
$$
k\geq \|\bh\|_{\ell_1} =h_1-h_2 =h_1'+k +k+1 -h_2' \geq 2k+1 - \|\bh'\|_{\ell_1}>k.
$$
The second case is similar, leading both a contradiction. We 
consider the case $q=0$.
%
It implies $(2k+1)(h_2-h_2') = h_1'-h_1 \in \{-k,\dots,k\}$ which 
holds if and only if $h_2'-h_2=0$ and $h_1'-h_1=0$. We reach again a contradiction. 
We conclude that for all $\bh\neq \bh'$ in 
$\cM(\cS_{\bone,k})$, one must have
$\bh \cdot \bz \not\equiv_n \bh' \cdot \bz $. 
\end{proof}

\begin{remark}
With $\bz=(k,k+1)$, the same result as above holds. In addition, the proof is 
much simpler as we quickly derive that $q=0$ and conclude. {\bf provide 
reference where this $\bz$ is used}
\end{remark}

\subsubsection{Sharpness for the rectangular block}
We let $\cB_{\bk}$ be the rectangular block associated 
with $\bk\in\N_0^d$, defined by $\cB_{\bk}=\{\bh\in\N_0^d: \bh \leq \bk\}$. 
The block is the $d$-cartesian product of $\{0,\dots,k_j\}$ for $j=1,\dots,d$.
Its cardinality is $(k_1+1)\dots (k_d+1)$. 
The mirrored block is given by $\cM(\cB_{\bk})=\{\bh\in\Z^d: - \bk \leq \bh \leq \bk \}$,
which is the $d$-cartesian product of $\{-k_j,\dots,k_j\}$ for $j=1,\dots,d$.
The cardinality of $\cM(\cB_{\bk})$ is $(2k_1+1)\dots (2k_d+1)$. 


\begin{lemma}
Let $\cB_{\bk}$ a block and $n_\bk = \#(\cB_{\bk})$. The vector 
$\bz=(z_1,\cdots,z_d)$ defined by $z_1=1$ and 
$z_i=\prod_{j=1}^{i-1}(k_j+1)$ for $i=2,\dots,d$, 
yields 
\be
\bh \to \bh \cdot \bz,\quad\quad \mbox{is injective over $\cB_{\bk}$}.
\ee 
Moreover, $|\bh' \cdot \bz - \bh \cdot \bz| < n_{ \bk}$ for any 
$\bh,\bh' \in \cB_{\bk}$.
\end{lemma}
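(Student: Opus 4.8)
The vector $\bz$ is precisely the list of place values of a \emph{mixed-radix} positional numeral system, in which the $i$-th coordinate $h_i\in\{0,\dots,k_i\}$ plays the role of a digit in base $k_i+1$ with weight $z_i$. The whole argument rests on a single telescoping identity, which I would record first. Setting $z_{d+1}:=\prod_{j=1}^d(k_j+1)=n_{\bk}$, the definition of $\bz$ gives $z_{j+1}=(k_j+1)z_j$ for all $1\le j\le d$, hence $k_j z_j=z_{j+1}-z_j$. Summing this telescopes to
\be
\sum_{j=1}^{i-1} k_j z_j = z_i - z_1 = z_i - 1,\qquad 1\le i\le d+1,
\ee
and in particular $\sum_{j=1}^d k_j z_j = n_{\bk}-1$. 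Both assertions of the lemma will follow from this identity almost immediately.

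For the bounded-difference claim I would note that $\bh\cdot\bz=\sum_{i=1}^d h_i z_i$ is a sum of nonnegative terms, so $\bh\cdot\bz\ge 0$, while $h_i\le k_i$ forces $\bh\cdot\bz\le \sum_{i=1}^d k_i z_i = n_{\bk}-1$. Thus the image of $\cB_{\bk}$ under $\bh\mapsto\bh\cdot\bz$ lies in $\{0,1,\dots,n_{\bk}-1\}$, and consequently $|\bh'\cdot\bz-\bh\cdot\bz|\le n_{\bk}-1<n_{\bk}$ for all $\bh,\bh'\in\cB_{\bk}$. This settles the second statement.

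For injectivity I would compare the most significant differing digit. Suppose $\bh\neq\bh'$ in $\cB_{\bk}$, and let $i$ be the largest index with $h_i\neq h_i'$; after possibly swapping $\bh$ and $\bh'$ (which is harmless by symmetry) assume $h_i>h_i'$. Since $h_j=h_j'$ for $j>i$, we have $\bh\cdot\bz-\bh'\cdot\bz=\sum_{j=1}^i (h_j-h_j')z_j$. The leading term satisfies $(h_i-h_i')z_i\ge z_i$, while the lower-order terms obey $\sum_{j=1}^{i-1}(h_j-h_j')z_j\ge -\sum_{j=1}^{i-1}k_j z_j = -(z_i-1)$ by the telescoping identity. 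Adding these yields $\bh\cdot\bz-\bh'\cdot\bz\ge z_i-(z_i-1)=1>0$, so the images differ and the map is injective.

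The argument is entirely elementary; the one point that genuinely needs checking — and which I regard as the crux — is that the accumulated contribution of all lower-order coordinates can never cancel a unit change in a higher-order digit. This is exactly guaranteed by the \emph{strict} inequality $\sum_{j<i}k_j z_j=z_i-1<z_i$: the place value $z_i$ strictly dominates the largest magnitude attainable from all coordinates below it. I do not anticipate any real obstacle beyond ensuring this domination is strict (so the $-1$ in the identity is what does the work) and confirming that the reduction to the case $h_i>h_i'$ is legitimate, which it is since the claimed inequality $|\bh'\cdot\bz-\bh\cdot\bz|<n_{\bk}$ and the injectivity conclusion are both symmetric in $\bh,\bh'$.
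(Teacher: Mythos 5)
Your proof is correct. The paper establishes the same lemma by induction on the dimension: it writes $\cB_{\bk}=\{0,\dots,k_1\}\times\cB_{\widehat\bk}$ with $\widehat\bk=(k_2,\dots,k_d)$ and $\bz=(1,(k_1+1)\widehat\bz)$, so that $\bh'\cdot\bz-\bh\cdot\bz=(h_1'-h_1)+(k_1+1)\bigl(\widehat\bh'\cdot\widehat\bz-\widehat\bh\cdot\widehat\bz\bigr)$; the bound $|h_1'-h_1|\le k_1$ combined with the inductive hypothesis $|\widehat\bh'\cdot\widehat\bz-\widehat\bh\cdot\widehat\bz|<n_{\widehat\bk}$ yields the difference bound via $k_1+(k_1+1)(n_{\widehat\bk}-1)=n_{\bk}-1$, and when the two inner products coincide it forces $h_1'=h_1$, reducing injectivity to the lower-dimensional block. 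You replace this recursion by the closed-form telescoping identity $\sum_{j<i}k_jz_j=z_i-1$ together with a most-significant-differing-digit comparison, which is non-inductive and isolates explicitly the strict place-value domination that the paper's induction uses only implicitly, one coordinate at a time. Both arguments rest on the same mixed-radix idea, so the difference is chiefly one of presentation: the paper's peeling-off-one-coordinate formulation mirrors the component-by-component structure it exploits algorithmically later, whereas your version is slightly more self-contained and pins down the single inequality ($z_i$ strictly exceeds the maximal contribution of all lower place values) on which everything hinges.
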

\begin{proof}
For the sake of clarity, we distinguish the cases 
$d=1$, $d=2$ and $d\ge3$. This also allows us
to build up the intuition that led us to the particular 
choice of $\bz$. 

\item \quad $\bullet$ Case $d=1$:  
$\cB_{k} = \{0,\dots,k\}$ and $n_k=k+1$. Every integer 
$z_1\neq0$ ensures the map is injective but only 
$z_1=\pm1$ ensure the second conclusion.
\item \quad $\bullet$ Case $d=2$:  
$\cB_{\bk} = \{0,\dots,k_1\} \times \{0,\dots,k_2\}$ and $n_\bk = (k_1+1)(k_2+1)$. 
We have $\bh \cdot \bz =h_1 z_1+ h_2 z_2$. 
For $h_2$ varying in 
$\{0,\dots,k_2\}$, the values $h_2 z_2$ make jumps of length 
$|z_2|$. By setting $z_1=1$, so that $h_1 z_1=h_1$ takes the 
contiguous values $0,\dots,k_1$, it is sufficient to set $z_2 = k_1+1$ 
in order to ensure $\bh \to \bh \cdot \bz$ is injective over 
$\cB_{\bk}$. Moreover 
$|\bh' \cdot  \bz -  \bh \cdot \bz| \le  |h'_1 -h_1| + (k_1+1) |h_2' - h_2| \le 
k_1 + (k_1+1) k_2 = n_\bk - 1$ for any $\bh,\bh' \in \cB_{\bk}$.

\item \quad $\bullet$ Case $d\ge3$: we have
$\cB_{\bk} = \{0,\dots,k_1\} \times \cB_{\widehat \bk}$, 
and $n_{\bk} = (k_1+1) n_{\widehat \bk}$, where 
$\widehat \bk = (k_2,\dots,k_d)$. In particular,
$n_{\bk}-1 = k_1 + (k_1+1) (n_{\widehat \bk}-1)$.
The idea is to exploit the above properties and set $\bz$ 
to a choice that allow us to unroll an induction and derive the 
desired conclusions. We set 
$\bz = (1,(k_1+1) \widehat \bz)$, then 
$$
\bh' \cdot \bz  - \bh  \cdot \bz = (h_1'-h_1) + (k_1+1) 
(\widehat \bh' \cdot \widehat \bz - \widehat \bh \cdot \widehat \bz ).
$$
For $\bh,\bh' \in \cB_{\bk}$, 
$|h_1'-h_1| \leq k_1$. If in addition we assume  
$|\widehat \bh' \cdot \widehat \bz - \widehat \bh \cdot \widehat \bz | <n_{\widehat \bk}$,
we obtain
$|\bh' \cdot \bz -  \bh \cdot \bz| \leq k_1 + (k_1+1) (n_{\widehat \bk}-1) = n_{\bk}-1$. 
Moreover if $\bh' \cdot \bz =  \bh \cdot \bz $ then 
$$
(h_1'-h_1) + (2k_1+1) 
(\widehat \bh' \cdot \widehat \bz - \widehat \bh \cdot \widehat \bz ) = 0,
$$
which in view of $|h_1'-h_1| \leq k_1$ implies 
$h_1' = h_1 $ and $\widehat \bh' \cdot \widehat \bz = \widehat \bh \cdot \widehat \bz$.
Defining $\widehat \bz$ from $\widehat \bk$ in a similar fashion to 
$\bz$ from $\bk$ allows us to re-iterate the previous arguments 
and yield $h_2' = h_2,\dots,~h_d' = h_d$. The choice $\bz$ suggested 
in the lemma satisfy the desired properties. The proof is complete. 
\end{proof}

The lemma is not exclusive to blocks anchored to $\bzero$. Over all 
translated block $\ba + \cB_\bk$ for $\ba\in \N_0^d$, the same 
dependence of $\bz$ in $\bk$ yields the exact same conclusions. 
In particular, since $\cM(\cB_\bk) = -\bk + \cB_{2\bk}$, we have the 
following. 

\begin{lemma}
Let $\cB_{\bk}$ be a block and $m_\bk = \#(\cM(\cB_{\bk}))$. 
The vector $\bz=(z_1,\cdots,z_d)$ defined by $z_1=1$ and 
$z_i=\prod_{j=1}^{i-1}(2k_j+1)$ 
for $i=2,\dots,d$, yields
\be
\bh \to \bh \cdot \bz,\quad\quad \mbox{is injective over $\cM(\cB_{\bk})$}.
\ee 
Moreover, $|\bh' \cdot \bz - \bh \cdot \bz| < m_{ \bk}$ for any 
$\bh,\bh' \in \cM(\cB_{\bk})$.
\label{lemma:InjectivitySymmetrized}
\end{lemma}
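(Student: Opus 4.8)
The plan is to reduce this statement to the preceding lemma on blocks anchored at $\bzero$, exploiting the translation identity $\cM(\cB_{\bk}) = -\bk + \cB_{2\bk}$ already noted in the text. Indeed, in each coordinate $\cM(\cB_{\bk})$ ranges over $\{-k_j,\dots,k_j\}$, which is exactly $\{0,\dots,2k_j\}$ shifted by $-k_j$; hence $\bh \in \cM(\cB_{\bk})$ if and only if $\bg := \bh + \bk \in \cB_{2\bk}$.

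First I would observe that the generator $\bz$ prescribed here is precisely the one the previous lemma assigns to the block $\cB_{2\bk}$: writing $\widetilde k_j = 2k_j$ for the parameters of $\cB_{2\bk}$, the recipe $z_1 = 1$, $z_i = \prod_{j=1}^{i-1}(\widetilde k_j + 1) = \prod_{j=1}^{i-1}(2k_j+1)$ coincides with the stated $\bz$. Consequently the preceding lemma (and the remark that it applies verbatim to translated blocks) guarantees both that $\bg \mapsto \bg \cdot \bz$ is injective over $\cB_{2\bk}$ and that $|\bg' \cdot \bz - \bg \cdot \bz| < n_{2\bk} = \#(\cB_{2\bk})$ for all $\bg, \bg' \in \cB_{2\bk}$.

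The key step is then to transfer these two properties across the translation. For $\bh, \bh' \in \cM(\cB_{\bk})$ set $\bg = \bh + \bk$ and $\bg' = \bh' + \bk$ in $\cB_{2\bk}$; since
$$
\bh' \cdot \bz - \bh \cdot \bz = (\bg' - \bk)\cdot \bz - (\bg - \bk)\cdot \bz = \bg' \cdot \bz - \bg \cdot \bz,
$$
the shift $-\bk \cdot \bz$ cancels, so $\bh \mapsto \bh \cdot \bz$ is injective over $\cM(\cB_{\bk})$ exactly when $\bg \mapsto \bg \cdot \bz$ is injective over $\cB_{2\bk}$, and the difference bound is identical. Finally, because $\cM(\cB_{\bk})$ and $\cB_{2\bk}$ differ only by a translation they have the same cardinality, $m_\bk = \#(\cM(\cB_{\bk})) = \#(\cB_{2\bk}) = n_{2\bk} = \prod_{j=1}^d (2k_j+1)$, which turns the bound $|\bh' \cdot \bz - \bh \cdot \bz| < n_{2\bk}$ into the claimed $< m_\bk$.

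There is no real obstacle here: the statement is essentially a corollary of the previous lemma, and the only thing requiring care is verifying the translation identity $\cM(\cB_{\bk}) = -\bk + \cB_{2\bk}$ coordinatewise and checking that the prescribed $\bz$ matches the generator the earlier lemma produces for $\cB_{2\bk}$. I would also note that, although the earlier remark phrases translation only for shifts in $\N_0^d$, the cancellation above is valid for the shift $-\bk$ as well, since differences of the linear form $\bh \cdot \bz$ are invariant under any common integer translation. Once these bookkeeping facts are in place the conclusion is immediate.
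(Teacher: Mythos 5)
Your argument is correct and is exactly the route the paper takes: the paper states this lemma as an immediate consequence of the preceding block lemma applied to $\cB_{2\bk}$, using the translation identity $\cM(\cB_\bk) = -\bk + \cB_{2\bk}$ and the invariance of differences $\bh'\cdot\bz - \bh\cdot\bz$ under a common shift. Your write-up just makes explicit the bookkeeping (matching of the generator, cardinality $m_\bk = n_{2\bk}$, and the fact that the shift $-\bk$ is allowed) that the paper leaves implicit.
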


In the previous lemmas, many others choices of $\bz$ yield the 
same conclusions. By building $\bz$ as suggested but in some 
order $z_{\pi(1)},\dots,z_{\pi(d)}$ from 
$(k_{\pi(1)},\dots,K_{\pi(d)})$
where $\pi$ a permutation of 
$\{1,\dots,d\}$, we obtain a different $\bz$. Therefore, at least 
$d!$ choice for $\bz$ are suitable.

\newpage

\newpage

\subsection{Number of nodes for polynomials of order $k$.}
The following lemma gives the optimal number of nodes for the 
reconstruction on the blocks as per the definition of lower sets. This 
example is relevant since all lower sets are union of blocks.

\begin{remark}
This case is also an example of the minimality of the necessary condition 
of \ref{Th: cond necessary n} on all dimensions $d\geq 1.$ It also gives a 
number of nodes $n\geq \#\{\Lambda_{\bk}\oplus\Lambda_{\bk}\},$ without 
$n$ being necessarily prime.  
\end{remark}


\begin{proposition}

Integral exactness is possible if $\bh.\bz$ have different remainders for all $\bh\in\Lambda$. $\bz$ can be constructed using component-by-component algorithm for n prime $n>card(\Lambda)$.

\end{proposition}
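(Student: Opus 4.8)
The plan is to reduce the claim to the characterization of integral exactness from Lemma~\ref{lemmaFourier} and then to realise the construction of $\bz$ by a greedy, component-by-component (CBC) sweep over the $d$ coordinates. Recall that a rank-$1$ rule with $n$ points and generator $\bz$ integrates exactly every $f\in\cF^{\rm Four}_\Lambda$ if and only if $\bh\cdot\bz\not\equiv_n 0$ for all $\bh\in\Lambda\setminus\{\bzero\}$ (and, in the cosine setting of Lemma~\ref{lemmaCosineExactness}, the same with $\Lambda$ replaced by $\cM(\Lambda)$). The condition that the residues $\bh\cdot\bz\bmod n$ be pairwise distinct is visibly stronger: if $\bzero\in\Lambda$ it forces $\bh\cdot\bz\not\equiv_n\bzero\cdot\bz=0$ for every nonzero $\bh$, hence integral exactness. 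So it suffices to produce a single $\bz$ for which $\bh\mapsto\bh\cdot\bz\bmod n$ avoids the value $0$ on $\Lambda\setminus\{\bzero\}$.

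First I would record a reduction exploiting the lower (block-union) structure: for a lower set $\Lambda$ with $\#(\Lambda)<n$, every coordinate value occurring in $\Lambda$ is bounded by $\#(\Lambda)-1<n$, so no nonzero $\bh\in\Lambda$ can satisfy $\bh\equiv\bzero\pmod n$ coordinatewise. Consequently each nonzero $\bh$ has a well-defined largest index $t(\bh)=\max\{j:\ h_j\not\equiv_n 0\}$, and $\bh\cdot\bz\equiv_n\sum_{j\le t(\bh)}h_j z_j$ depends only on $z_1,\dots,z_{t(\bh)}$.

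The core of the proof is the CBC sweep, fixing $z_1,z_2,\dots,z_d$ in turn. Having chosen $z_1,\dots,z_{s-1}$, consider the indices $\bh\in\Lambda\setminus\{\bzero\}$ with $t(\bh)=s$. For each such $\bh$ the constraint $\bh\cdot\bz\not\equiv_n 0$ reads $h_s z_s\not\equiv_n-\sum_{j<s}h_j z_j$; since $n$ is prime and $0<|h_s|<n$, $h_s$ is invertible modulo $n$, so this excludes \emph{exactly one} residue class for $z_s$. Hence the set of residues forbidden at step $s$ has cardinality at most $\#\{\bh\in\Lambda\setminus\{\bzero\}:\ t(\bh)=s\}$, and these counts sum over $s$ to $\#(\Lambda\setminus\{\bzero\})<n$. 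In particular at every step strictly fewer than $n$ residues are forbidden, so a legal $z_s\in\{0,\dots,n-1\}$ always remains; and once $z_s$ is fixed the indices with $t(\bh)=s$ are permanently safe because $\bh\cdot\bz$ no longer changes. After the sweep, $\bh\cdot\bz\not\equiv_n 0$ for all $\bh\in\Lambda\setminus\{\bzero\}$, the desired admissibility. An equivalent, non-constructive existence proof is the averaging bound: each nonzero $\bh$ rules out the hyperplane $\{\bz\in\Z_n^d:\ \bh\cdot\bz\equiv_n 0\}$ of exactly $n^{d-1}$ generators, and $\#(\Lambda\setminus\{\bzero\})\,n^{d-1}<n^d$ whenever $n>\#(\Lambda\setminus\{\bzero\})$, which is guaranteed by $n>\#(\Lambda)$.

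The step I expect to be the main obstacle — and the reason primality and the lower structure both matter — is controlling the number of forbidden residues per component. The clean ``one forbidden class per index'' bookkeeping relies on $h_s$ being a unit modulo $n$, which is where $n$ prime is used; without it a single index could forbid several residues and the per-step budget of $n$ could be exceeded. I would also flag that the same CBC scheme applied to the \emph{reconstruction} (pairwise-distinct-residue) requirement forbids one residue per colliding \emph{pair}, so its per-step budget is governed by $\binom{\#(\Lambda)}{2}$ rather than $\#(\Lambda)$; obtaining $n>\#(\Lambda)$ is therefore special to integral exactness, and matching the reconstruction lower bounds of \eqref{eq: lower bounds} would require the sharper, structure-based generators built for blocks in Lemma~\ref{lemma:InjectivitySymmetrized} rather than this greedy argument.
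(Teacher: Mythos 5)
There is a genuine gap: your argument never engages with the mirrored set $\cM(\Lambda)$, which is where the paper's own proof does all of its work. In the tent-transformed cosine setting that this section is concerned with, integral exactness over $\cF^{\rm cos}_\Lambda$ is characterized by $\bh\cdot\bz\not\equiv_n 0$ for all $\bh\in\cM(\Lambda)\setminus\{\bzero\}$ (Lemma~\ref{lemmaCosineExactness}, condition \eqref{requirePlan0}), not merely for $\bh\in\Lambda\setminus\{\bzero\}$. Your justification of the first assertion --- that distinct residues on $\Lambda$ together with $\bzero\in\Lambda$ force $\bh\cdot\bz\not\equiv_n\bzero\cdot\bz= 0$ for nonzero $\bh\in\Lambda$ --- only covers the indices of $\Lambda$ itself and says nothing about a mixed-sign index such as $(1,-1)\in\cM(\Lambda)$. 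The paper's proof is exactly the bridge you are missing: for $\bh\in\cM(\Lambda)\setminus\{\bzero\}$ write $\bh=\bh^+-\bh^-$ with $\bh^+=\max(\bh,\bzero)$ and $\bh^-=\max(-\bh,\bzero)$; because $\Lambda$ is lower (implicit in this subsection) both parts lie in $\Lambda$, and they are distinct since their difference is $\bh\neq\bzero$, so injectivity of $\bh'\mapsto\bh'\cdot\bz\bmod n$ on $\Lambda$ yields $\bh\cdot\bz=\bh^+\cdot\bz-\bh^-\cdot\bz\not\equiv_n 0$. That decomposition is the entire content of the first sentence of the proposition, and it is absent from your write-up.

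The CBC portion of your proposal is a correct and standard counting argument, but it constructs a generator for the weaker per-index condition $\bh\cdot\bz\not\equiv_n 0$ on $\Lambda\setminus\{\bzero\}$, not one satisfying the proposition's stated hypothesis (pairwise distinct residues on $\Lambda$); as you yourself observe, the latter forbids one residue per colliding pair and cannot be guaranteed within a budget of $n>\#(\Lambda)$. Even if one targets exactness directly rather than via the hypothesis, the relevant index set is $\cM(\Lambda)\setminus\{\bzero\}$, so the per-step budget in your sweep becomes $\#(\cM(\Lambda))-1$ rather than $\#(\Lambda)-1$, and the threshold $n>\#(\Lambda)$ is not what your count delivers. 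In short, what you prove is Fourier-space exactness over $\cF^{\rm Four}_\Lambda$ for $n$ prime exceeding $\#(\Lambda)$, which is a different and weaker claim than the one the proposition makes; to repair the proof you should either insert the $\bh^+-\bh^-$ decomposition to pass from $\Lambda$ to $\cM(\Lambda)$, or run your CBC count over $\cM(\Lambda)$ with the correspondingly larger value of $n$.
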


The idea is for all $\bh\in\cM(\Lambda)$ we write 
$\bh=\bh^+ - \bh^-$ where $\bh^+,\bh^-\in \Lambda.$ Therefore 
under the condition of the proposition, 
$\bh \cdot \bz=\bh^+ \cdot \bz - \bh^- \bz \not\equiv_n 0$ 
because $\bh^+ \cdot \bz\not\equiv_n$ $\bh^- \cdot\bz$.


\textcolor{red}{before th 1.1 say few words why you consider the cosine basis (and not $\eta_k$.). Also it seems that the lower set case is treated in references [4,9,10] of \cite{rank1}, see page 21, proof of th23.}
\textcolor{blue}{Ok, we will show that the cosine function allows to do the trick that is helpful to have a better bound. Abdellah asked to add the  propositions about lower sets here for the general knowledge, and also in the hope to have better bounds on their cardinality (further work). }

\newpage
\section{Lower sets}
\subsection{Definition and examples}
An index set $\Lambda\subset\N_0^d$ is called lower 
\footnote{also called downward closed} if it does satisfy 
the following property
\be
\bk \in \Lambda \quad \mbox{ and }\quad \bh \leq \bk \quad \implies \quad \bh \in\Lambda.
\ee
The comparison $\bh \leq \bk$ is meant component-wise, $h_i\leq k_i$ for all $1 \leq i \leq d$.
A lower set $\Lambda$ can equivalently be defined by the property
\be
\bk \in\Lambda \quad {\rm and}\quad k_i\geq 1 
\quad \implies \quad \bk -e_i\in\Lambda.
\ee
Any lower set contains the null multi-index $\bzero$. 
For $\Lambda$ lower, there holds
$\Lambda = \cup_{\bk \in\Lambda}\cB_\bk$.

Typical examples of lower sets are the isotropic subsets of $\N_0^d$
$\left\{\bk: k_j \leq r ~\forall j\right\}$, 
$\{\bk: \sum_{j=1}^d k_j \leq r \}$, and
$\{\bk: \sum_{j=1}^d k_j^2 \leq r^2 \}$, parametrized 
by $r \geq1$. These are the intersections of $\N_0^d$
with balls in $\R^d$ of radius $r$, for the $\ell_\infty$, $\ell_1$ 
and $\ell_2$ norms respectively. Another typical set is the hyperbolic 
cross $\{ \bk : \prod_{j=1}^d (1+k_j) \leq N\}$,
which is the union of all lower sets of cardinality less or equal $N$.

The lower sets, we will refer to as of 
block-type, cross-type or simplex-type, are 
\be
\begin{array}{c}
\displaystyle \cB_\bk := \left\{\bh:\bh\leq\bk\right\},\quad\quad\quad
\cC_\bk := \left\{h_j e_j :0\leq h_j \leq k_j\right\},\\
\cS_{\bw,u} := \left\{\bh:\sum_{j=1}^d w_j h_j \leq u\right\}.
\end{array}
\ee
The first two are parametrized by $\bk\in\N_0^d$ while the 
second by weights $w_1,\dots ,w_d >0$ and upper bound 
$u>0$. These sets are isotropic if $\bk$ (resp. $\bw$) is a 
multiple of $\bone$. 
\subsection{Maximal and extremal elements}

We say that $\bk$ is a {\it maximal} element of $\Lambda\subset\N^d$ if 
it is maximal w.r.t. the partial order $\leq$, i.e. 
$\bh \in \Lambda $ and $\bk \leq \bh $ $\Longrightarrow$ $\bh = \bk$.
This is also equivalent to $\bk+e_i\not\in\Lambda$ for all 
$i=1,\dots,d$. 
We say that $\bk$ is an {\it extremal} element of $\Lambda$ if 
$\bk$ can not be the midpoint of two distinct multi-indices from 
$\Lambda$, i.e. $2\bk = \bh + \bh'$ and $\bh,\bh'\in\Lambda$ 
$\Longrightarrow$ $\bh = \bh' = \bk$. The element $\bzero$ is
clearly extremal. It is obvious that 
if $\bk + e_j \in \Lambda$ for some $j\in\supp(\bk)$,
then $\bk$ is not extremal.

Any finite lower set has at least one maximal element, and is completely 
determined by its maximal elements, i.e.
\be
\label{unionBlocks}
\Lambda = \bigcup_{\bk ~maximal~in~\Lambda } \cB_\bk\;.
\ee

We denote by $\cE(\Lambda)$ (resp. $\cE_{\leq}(\Lambda)$) the set of extremal 
(resp. maximal) elements of $\Lambda$. We also introduce notation $\cK(\Lambda)$ 
for the elements that are maximal along canonical directions, i.e. $\bk \in \cK(\Lambda)$ 
if and only if $\bk = k e_j \in \Lambda$ for some $k\geq1$ but $\bk + e_j \not\in \Lambda$.
A maximal element is not necessarily extremal, however those in $\cK(\Lambda)$ are.

The extremal and maximal elements block/cross/simplex type sets 
are explicit,
\be
\begin{array}{c}
\cE_{\leq}(\cB_\bk) = \{\bk\},\quad\quad
\cE_{\leq}(\cC_\bk) = \{k_j e_j : j ~s.t.~k_j \geq1\},\\
\cE_{\leq}(\cS_{\bw,u}) =\left\{\bh : u- \delta < \sum_{j=1}^d w_j h_j \leq u\right\}, 
\end{array}
\label{maxElementExamples}
\ee
where $\delta = \min(w_1,\dots,w_d)$, and 
\be
\begin{array}{c}
\cE(\cB_\bk) = \{\bh :h_j = 0 ~or~h_j = k_j\},\\
\cE(\cC_\bk) = \{\bzero\} \cup\cE_{\leq}(\cC_\bk), \quad\quad
\cE(\cS_{\bw,u}) = \{\bzero\} \cup \cE_{\leq}(\cS_{\bw,u}).
\end{array}
\ee
Also $\cK(\cB_\bk) =\cK(\cC_\bk) = \cE_{\leq}(\cC_\bk)$ and
\be
\cK(\cS_{\bw,u}) =\{\lfloor u/w_j \rfloor e_j : j ~s.t.~ w_j \leq u \}.
\label{maximalSimplex2}
\ee

\subsection{Sum of lower sets}
It is clear that lower structure is preserved by union and intersection. 
Furthermore, we have the following result. 
\begin{proposition}
Let $\Lambda, \Lambda' \subset \N_0^d$ be two lower sets. Then  
\be
\Lambda\oplus\Lambda'=\{\bh+\bk: \bh \in \Lambda,\bk \in\Lambda'\}
\quad \mbox{is also lower}.
\ee

\end{proposition}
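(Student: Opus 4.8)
The plan is to verify the defining implication of a lower set directly from the Minkowski-sum structure, via an explicit coordinate-wise decomposition. Fix an arbitrary $\bv\in\Lambda\oplus\Lambda'$ and an arbitrary $\bu\in\N_0^d$ with $\bu\le\bv$; the goal is to exhibit a splitting $\bu=\bg+\bg'$ with $\bg\in\Lambda$ and $\bg'\in\Lambda'$, which is exactly what lowerness of $\Lambda\oplus\Lambda'$ demands. By definition of the sum I may write $\bv=\bh+\bk$ with $\bh\in\Lambda$ and $\bk\in\Lambda'$, so that coordinate-wise $u_i\le v_i=h_i+k_i$ for every $i$. Thus the problem reduces, in each coordinate, to splitting an integer that is bounded above by $h_i+k_i$ into two nonnegative parts respecting the caps $h_i$ and $k_i$.

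The key step is this one-dimensional splitting, performed independently in each coordinate. For each $i$ set $g_i:=\min(u_i,h_i)$ and $g_i':=u_i-g_i$. Then $g_i,g_i'\in\N_0$ and $g_i+g_i'=u_i$, while $g_i\le h_i$ holds by construction. Moreover $g_i'\le k_i$: if $u_i\le h_i$ then $g_i'=0\le k_i$, and if $u_i>h_i$ then $g_i'=u_i-h_i\le(h_i+k_i)-h_i=k_i$. Assembling the coordinates yields vectors $\bg,\bg'\in\N_0^d$ with $\bu=\bg+\bg'$, $\bg\le\bh$, and $\bg'\le\bk$. The single choice $g_i=\min(u_i,h_i)$ is what enforces both caps simultaneously, so this is the only point of the argument that requires any care.

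It then remains to invoke lowerness of the two factors: from $\bg\le\bh\in\Lambda$ we get $\bg\in\Lambda$, and from $\bg'\le\bk\in\Lambda'$ we get $\bg'\in\Lambda'$. Hence $\bu=\bg+\bg'\in\Lambda\oplus\Lambda'$, completing the proof. I do not expect a genuine obstacle here; the crux is simply the coordinate split, which is elementary. As an alternative one could instead use the equivalent characterization ``$\bk\in\Lambda$ and $k_i\ge1\implies\bk-\kr_i\in\Lambda$'': if $v_i\ge1$ then $h_i+k_i\ge1$, so at least one summand has positive $i$-th coordinate and may be reduced, giving $\bv-\kr_i\in\Lambda\oplus\Lambda'$, and iterating recovers the general case. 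This is shorter but merely repackages the same idea, so I would present the direct decomposition as the main argument.
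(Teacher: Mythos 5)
Your proof is correct. The paper argues differently: it first notes that $\cB_{\bh}\oplus\cB_{\bk}=\cB_{\bh+\bk}$, then writes $\Lambda=\cup_{\bh\in\Lambda}\cB_{\bh}$ and $\Lambda'=\cup_{\bk\in\Lambda'}\cB_{\bk}$ and uses distributivity of $\oplus$ over $\cup$ to get $\Lambda\oplus\Lambda'=\cup_{\bh,\bk}\cB_{\bh+\bk}$, a union of blocks and hence lower. Your direct coordinate split $g_i=\min(u_i,h_i)$ is in substance the verification of the inclusion $\cB_{\bh+\bk}\subseteq\cB_{\bh}\oplus\cB_{\bk}$ that the paper dismisses as ``easy to verify,'' so you are supplying exactly the detail the paper elides, just packaged as a one-shot elementwise argument rather than routed through the block decomposition. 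Your version is more self-contained and arguably cleaner as a standalone proof; the paper's version buys something extra, namely the structural identity $\Lambda\oplus\Lambda'=\cup_{\bh,\bk\ \mathrm{maximal}}\cB_{\bh+\bk}$ and the consequence $\cE_{\leq}(\Lambda\oplus\Lambda')\subset\cE_{\leq}(\Lambda)\oplus\cE_{\leq}(\Lambda')$, which the paper reuses immediately afterwards (e.g.\ in the cardinality comparison of $\cM(\Lambda)$ with $\Lambda\oplus\Lambda$). Either route is fine for the proposition as stated.
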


The proof is straightforward. It is easy to verify 
that $\cB_\bh \oplus \cB_\bk = \cB_{\bh+\bk}$. Then, since $\oplus$
distributes $\cup$, we deduce
$\Lambda\oplus\Lambda' = (\cup_{\bh\in\Lambda}\cB_\bh)\oplus(\cup_{\bk\in\Lambda'} \cB_\bk)  
=\cup_{\bh\in\Lambda,\bk\in\Lambda'}\cB_{\bh+\bk}$ is indeed lower. 
The proof shows incidentally that
$\cE_{\leq}(\Lambda\oplus\Lambda') \subset \cE_{\leq}(\Lambda)\oplus\cE_{\leq}(\Lambda')$.

As mentioned, the sum $\oplus$ of block-type sets is plain,
i.e. $\cB_\bh \oplus \cB_\bk = \cB_{\bh+\bk}$. 
For cross-type sets, we have
$\cC_\bh \oplus \cC_\bk = \cup_{1\leq i,j\leq d} ~\cB_{h_i e_i + k_j e_j }$.
As for simplex-type sets, there holds
$\cS_{\bw,u_1} \oplus \cS_{\bw,u_2} \subset \cS_{\bw,u_1+u_2}$.



\begin{proposition}
Let $\Lambda\subset \N_0^d$ a finite lower set. There holds
\be
\#(\cM(\Lambda)) = \sum_{\bk\in\Lambda} 2^{|\bk|_0} 
\leq \#(\Lambda\oplus\Lambda). 
\ee
Equality holds if and only if $\Lambda = \cB_{\bk}$ for some $\bk\in \N_0^d$.
\end{proposition}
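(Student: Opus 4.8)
The plan is to separate the claim into the asserted equality $\#(\cM(\Lambda))=\sum_{\bk\in\Lambda}2^{|\bk|_0}$ and the inequality with its equality case, and to prove the latter by induction on the dimension $d$. The equality is the routine part: since $\Lambda\subset\N_0^d$ we have $\cM(\Lambda)=\{\bh\in\Z^d:\ |\bh|\in\Lambda\}$, and the map $\bh\mapsto|\bh|$ carries $\cM(\Lambda)$ onto $\Lambda$ with the fiber over $\bk$ consisting exactly of the $2^{|\bk|_0}$ sign choices on the support of $\bk$. These fibers are disjoint and cover $\cM(\Lambda)$, so summing their sizes gives the identity.

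For the inequality I would induct on $d$, slicing along the last coordinate. Write $\Lambda^{(t)}=\{\bh\in\N_0^{d-1}:(\bh,t)\in\Lambda\}$; because $\Lambda$ is lower these are lower subsets of $\N_0^{d-1}$, nested as $\Lambda^{(0)}\supseteq\Lambda^{(1)}\supseteq\cdots\supseteq\Lambda^{(T)}\neq\emptyset$, where $T$ is the largest last coordinate occurring. Grouping $\cM(\Lambda)$ by $|h_d|$ yields $\#(\cM(\Lambda))=\#(\cM(\Lambda^{(0)}))+2\sum_{t=1}^{T}\#(\cM(\Lambda^{(t)}))$. On the other side, the slice of $\Lambda\oplus\Lambda$ at last coordinate $s$ contains $\Lambda^{(t)}\oplus\Lambda^{(t')}$ for every $t+t'=s$; by nestedness the even slice at $2t$ contains $\Lambda^{(t)}\oplus\Lambda^{(t)}$, and the odd slice at $2t-1$ contains $\Lambda^{(t-1)}\oplus\Lambda^{(t)}\supseteq\Lambda^{(t)}\oplus\Lambda^{(t)}$, while the bottom slice at $0$ equals $\Lambda^{(0)}\oplus\Lambda^{(0)}$. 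The induction hypothesis gives $\#(\Lambda^{(t)}\oplus\Lambda^{(t)})\ge\#(\cM(\Lambda^{(t)}))$, so each even and odd slice with $t\ge1$ has cardinality at least $\#(\cM(\Lambda^{(t)}))$ and the bottom slice at least $\#(\cM(\Lambda^{(0)}))$. Summing these lower bounds over all $2T+1$ slices reproduces $\#(\cM(\Lambda^{(0)}))+2\sum_{t=1}^{T}\#(\cM(\Lambda^{(t)}))=\#(\cM(\Lambda))$, which is exactly the inequality, since the slices partition $\Lambda\oplus\Lambda$. The base case $d=1$ is immediate because every finite lower subset of $\N_0$ equals some $\cB_k=\{0,\dots,k\}$, for which both sides equal $2k+1$.

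For the equality case, the direction $\Lambda=\cB_\bk\Rightarrow$ equality is the direct computation $\#(\cM(\cB_\bk))=\prod_j(2k_j+1)=\#(\cB_{2\bk})=\#(\cB_\bk\oplus\cB_\bk)$, using $\cB_\bh\oplus\cB_\bk=\cB_{\bh+\bk}$ from the previous proposition. For the converse I would trace equality back through the induction: since each summand is bounded below and the totals agree, every slice bound must be tight. Tightness of the bottom slice forces $\#(\Lambda^{(0)}\oplus\Lambda^{(0)})=\#(\cM(\Lambda^{(0)}))$, so $\Lambda^{(0)}$ is a block by the inductive equality case; tightness of the even slice at $2t$ forces $\#(\Lambda^{(t)}\oplus\Lambda^{(t)})=\#(\cM(\Lambda^{(t)}))$, so each $\Lambda^{(t)}=\cB_{\bk^{(t)}}$; and tightness of the odd slice at $2t-1$ forces, through the chain $\Lambda^{(t)}\oplus\Lambda^{(t)}\subseteq\Lambda^{(t-1)}\oplus\Lambda^{(t)}$, the equality of cardinalities, hence of sets, namely $\cB_{\bk^{(t-1)}+\bk^{(t)}}=\cB_{2\bk^{(t)}}$, i.e. $\bk^{(t-1)}=\bk^{(t)}$. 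Thus all slices coincide with one block $\cB_{\bk^{(0)}}$, and $\Lambda=\cB_{(\bk^{(0)},T)}$ is a block.

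The routine identity and the inductive inequality are mechanical; the delicate point is the equality analysis, where I must ensure that tightness of the summed bound propagates to each individual slice and, crucially, that the mixed odd slice forces the two consecutive block slices to be identical rather than merely comparable. Reading off the equality of sets from the equality of cardinalities along the chain $\Lambda^{(t)}\oplus\Lambda^{(t)}\subseteq\Lambda^{(t-1)}\oplus\Lambda^{(t)}\subseteq(\Lambda\oplus\Lambda)^{(2t-1)}$ is the step that demands the most care.
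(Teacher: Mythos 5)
Your proof is correct, but it follows a genuinely different route from the paper's. The paper proves the inequality in a single dimension-free stroke: it defines the set-valued map $F:\bk\mapsto\{2\bk-\varepsilon\}_{\varepsilon\in\cD_\bk}$, where $\cD_\bk$ collects the $0$--$1$ vectors supported on $\supp(\bk)$, checks that the images $F(\bk)$ are pairwise disjoint of size $2^{|\bk|_0}$, and observes that $2\bk-\varepsilon=\bk+(\bk-\varepsilon)\in\Lambda\oplus\Lambda$ by lowerness; the equality case is then handled by comparing $F(\Lambda)=\bigcup\cB_{2\bk}$ with $\Lambda\oplus\Lambda=\bigcup\cB_{\bi+\bj}$ over maximal elements and running a lexicographic-order argument. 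You instead induct on the dimension, slicing both $\cM(\Lambda)$ and $\Lambda\oplus\Lambda$ along the last coordinate and exploiting the nesting $\Lambda^{(0)}\supseteq\Lambda^{(1)}\supseteq\cdots$ to match the $2T+1$ slices of $\Lambda\oplus\Lambda$ against the count $\#(\cM(\Lambda^{(0)}))+2\sum_{t\ge1}\#(\cM(\Lambda^{(t)}))$. Each approach buys something: the paper's injection is uniform in $d$ and yields, as a byproduct it exploits immediately afterwards, an explicit embedding $\bk\mapsto2|\bk|-\mathbf{1}_{\bk<\bzero}$ of $\cM(\Lambda)$ into $\Lambda\oplus\Lambda$, which your counting argument does not produce. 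Conversely, your treatment of the equality case is arguably tighter: tightness propagates slice by slice through the induction, and the odd slice at $2t-1$ cleanly forces $\cB_{\bk^{(t-1)}+\bk^{(t)}}=\cB_{2\bk^{(t)}}$, hence $\bk^{(t-1)}=\bk^{(t)}$, whereas the paper's converse rests on a rather terse argument about lexicographically extremal pairs of maximal elements. The one point you flag as delicate --- reading equality of sets off equality of cardinalities along the chain $\Lambda^{(t)}\oplus\Lambda^{(t)}\subseteq\Lambda^{(t-1)}\oplus\Lambda^{(t)}\subseteq(\Lambda\oplus\Lambda)^{(2t-1)}$ --- is sound, since all three are finite sets squeezed between two equal cardinalities.
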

\begin{proof}
For a multi-index $\bk\in\N_0^d$, we let ${\rm supp}(\bk)$ be the support 
of $\bk$, which is the subset of $\{1,\dots,d\}$ consisting of $j$ s.t. $k_j\neq0$. 
We define $\cD_{\bk}$ to be the set of $\varepsilon \in \{0,1\}^d$ satisfying 
${\rm supp}(\varepsilon) \subset {\rm supp}(\bk)$. 
We then consider a set-valued map over $\N_0^d$ by 
$$
F:\bk \longmapsto \{2\bk - \varepsilon \}_{\varepsilon \in \cD_{\bk}}
$$
This map satisfies $F(\bk) \cap F(\bk') = \emptyset$ if $\bk \neq \bk'$. 
First, it is easily verified $F(\bzero) = \{\bzero\}$ and $\bzero$ does not 
belongs to any other $F(\bk)$. Then, if we are given 
$\bk,\bk'\in \N_0^d-\{\bzero\}$ s.t. $F(\bk)\cap F(\bk')\neq\emptyset$,
then for some $\varepsilon \in \cD_{\bk}$ and 
$\varepsilon' \in \cD_{\bk'}$ we must have 
$2\bk - \varepsilon = 2\bk' - \varepsilon'$. But this 
implies $2(\bk - \bk') = \varepsilon - \varepsilon' $ belongs to 
$\{-1,0,1\}^d$, which is possible only if $\bk - \bk' = \bzero$. 
Now, given $\Lambda$ lower, 
we have for any $\bk\in\Lambda$ and any 
$\varepsilon \in \cD_{\bk}$ that
$2\bk - \varepsilon = \bk + (\bk - \varepsilon)$ thus it belongs to 
$\Lambda\oplus\Lambda$. Therefore 
$\cup_{\bk\in\Lambda} F(\bk) \subset \Lambda\oplus\Lambda$. 
This combined with empty intersection property implies 
$$
\#(\Lambda\oplus\Lambda) \geq  
\sum_{\bk\in\Lambda}  \#(\{2\bk - \varepsilon \}_{\varepsilon \in \cD_{\bk}})
=\sum_{\bk\in\Lambda}  \#(\cD_{\bk})
=\sum_{\bk\in\Lambda}  2^{|\bk|_0}
= \#(\cM(\Lambda)).
$$
The first part of the lemma is complete.
We note that 
$F(\Lambda):=\cup_{\bk\in\Lambda} F(\bk)$ has an explicit formula
for $\Lambda$ lower, i.e. 
$F(\Lambda) = \cup_{\bk \in\Lambda}\cB_{2\bk} = \cup_{\bk \in \Lambda \text{ maximal }}\cB_{2\bk}$.
On the other hand, we have 
$\Lambda\oplus\Lambda = 
\cup_{\bh,\bk \in\Lambda}\cB_{\bh+\bk} 
= \cup_{\bh,\bk \in \Lambda \text{ maximal }}\cB_{\bh+\bk}$.
Clearly, equality holds for block-type sets, with 
$F(\cB_\bh) = \cB_{2\bh} = \cB_{\bh} \oplus \cB_{\bh}$.
In general,
it seems unlikely that for all pairs $\bi,\bj$ of maximal elements 
of $\Lambda$, one has $\bi+\bj \subset F(\Lambda)$. We give a 
proof to this observation. 

We let $\Lambda$ be lower not of block-type and assume 
$\Lambda\oplus \Lambda =F(\Lambda)$. Let 
$\bi^{(1)},\dots,\bi^{(m)}$ be its maximal elements 
ordered in lexicographical order, $\bh \leq_L \bk$ if and only if 
$h_1 < k_1$ or ($h_1 = k_1$ and $\widehat\bh \leq_L \widehat \bk$) 
where $\widehat\bx=(x_2,\dots,x_d)$. By the assumption 
$\Lambda\oplus \Lambda =F(\Lambda)$, we must have
$\bi^{(m-1)} + \bi^{(m)} \leq 2\bi^{(n)}$ for some $n\in \{1,\dots,m\}$.
We can not have $n=m-1$ or $n=m$ since maximal elements are not 
comparable. If $m=2$, we reach already a contradiction. We assume that 
$m\geq3$ and use notation $\nu=\bi^{(m-1)}$, $\mu=\bi^{(m)}$, 
$\lambda=\bi^{(n)}$, hence $\nu + \mu \leq 2 \lambda$. Using the 
lexicographical order assumption, at position $1$ we must have
$\lambda_1\leq \nu_1$ and $\lambda_1\leq \mu_1$, hence $2\lambda_1 \leq \nu_1 + \mu_1$.
If one of the first inequalities is strict, we get $2\lambda_1 < \nu_1 + \mu_1$
contradicting $\nu + \mu \leq 2 \lambda$. Re-iterating this argument shows that 
we must have $\lambda_j = \nu_j$ and $\lambda_j = \mu_j$ for all $j$. 
We again reach a contradiction. The proof is complete.

\end{proof}

With little extra effort, we identify a map that allows to 
embed $\cM(\Lambda)$ into $\Lambda\oplus\Lambda$ 
for any lower set $\Lambda$. In view of $F$ from the 
proof, the following map is suited
\be
\bk \in \Z \mapsto 2|\bk| - {\bf 1}_{\bk <\bzero} \in \N_0^d,
\ee
where $|\bk|$ is component-wise absolut value of $\bk$ and
${\bf 1}_{\bk <\bzero}$ stands for the vector in 
$\{0,1\}^d$ having ones in positions $j$ s.t. $k_j<0$.


\newpage
\section{Equivalence between reconstruction plans}
In this section, we establish some equivalences properties between 
admissibility requirements \eqref{requirePlan0}, \eqref{requirePlanA}
\eqref{requirePlanB} and \eqref{requirePlanC} when $\Lambda$ is 
lower. The first result is straightforward. The following results are
more involved, however rely on the same techniques of proof.

\begin{proposition} 
Let $\Lambda \subset \N_0^d $ be a lower set, $n\in\N$, and $\bz \in \Z^d$.
Then,
\be
\bh\cdot\bz \not\equiv_n \bh'\cdot\bz  ~~~\forall  
\bh\neq\bh' \in \Lambda
\quad\Longrightarrow\quad
\bh\cdot\bz \not\equiv_n 0 ~~~\forall \bh \in \cM(\Lambda)\setminus\{0\}.
\ee
An equivalence holds if $\Lambda = \cB_\bk$ for some $\bk\in \N_0^d$.
\end{proposition}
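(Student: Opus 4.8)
The plan is to exploit the elementary decomposition of any mirrored index into a difference of two indices of $\Lambda$, which is available precisely because $\Lambda$ is lower. For $\bh\in\cM(\Lambda)$ I would use the positive/negative parts $\bh_+,\bh_-$, so that $\bh=\bh_+-\bh_-$ and $\bh_++\bh_-=|\bh|$. Since $\bh\in\cM(\Lambda)$ means $|\bh|\in\Lambda$, and since $\bh_+\le|\bh|$ and $\bh_-\le|\bh|$ coordinate-wise, the downward-closed property of $\Lambda$ forces both $\bh_+\in\Lambda$ and $\bh_-\in\Lambda$. This is the one place where lowerness enters the forward implication.

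For the forward direction I would then argue directly. Fix $\bh\in\cM(\Lambda)\setminus\{\bzero\}$ and write $\bh=\bh_+-\bh_-$ as above. Because $\bh\neq\bzero$ we have $\bh_+\neq\bh_-$, and both lie in $\Lambda$, so the hypothesis $\bh\cdot\bz\not\equiv_n\bh'\cdot\bz$ for distinct $\bh,\bh'\in\Lambda$ applies to the pair $(\bh_+,\bh_-)$ and gives $\bh_+\cdot\bz\not\equiv_n\bh_-\cdot\bz$. Subtracting yields $\bh\cdot\bz=(\bh_+-\bh_-)\cdot\bz\not\equiv_n0$, which is exactly the conclusion. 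No genuine obstacle arises here; the content is entirely in the decomposition.

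For the equivalence in the block case the extra ingredient is that the difference set of a block coincides with its mirror. I would observe that for $\bh,\bh'\in\cB_\bk$ one has $0\le h_i,h_i'\le k_i$, hence $|h_i-h_i'|\le k_i$ in each coordinate, so $\bh-\bh'\in\cM(\cB_\bk)$. Assuming the right-hand condition $\bg\cdot\bz\not\equiv_n0$ for all $\bg\in\cM(\cB_\bk)\setminus\{\bzero\}$, I would take any distinct $\bh,\bh'\in\cB_\bk$, set $\bg:=\bh-\bh'\in\cM(\cB_\bk)\setminus\{\bzero\}$, and conclude $\bh\cdot\bz-\bh'\cdot\bz\not\equiv_n0$, i.e. injectivity of $\bh\mapsto\bh\cdot\bz$ on $\cB_\bk$. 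Together with the forward direction this closes the equivalence.

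The point deserving care — and the reason equivalence is special to blocks — is the relation between the difference set $\Lambda\ominus\Lambda:=\{\bh-\bh':\bh,\bh'\in\Lambda\}$ and $\cM(\Lambda)$. The decomposition step shows $\cM(\Lambda)\subseteq\Lambda\ominus\Lambda$ for every lower set, which is what powers the forward implication; equality $\Lambda\ominus\Lambda=\cM(\Lambda)$ holds for blocks by the coordinate bound above, but fails in general. For the \emph{L}-shaped lower set $\{\bzero,\kr_1,\kr_2\}$, for instance, the difference $\kr_1-\kr_2$ lies in $\Lambda\ominus\Lambda$ but not in $\cM(\Lambda)$, so the right-hand condition no longer forces injectivity on $\Lambda$. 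I would flag this inclusion as the conceptual crux isolating why the reverse implication holds only for block-type sets.
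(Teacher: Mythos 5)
Your proposal is correct and follows essentially the same route as the paper: the forward direction via the decomposition $\bh=\bh_+-\bh_-$ with both parts in $\Lambda$ by lowerness (since $\bh_+,\bh_-\leq|\bh|\in\Lambda$) and $\bh_+\neq\bh_-$, and the block case via the observation that differences of elements of $\cB_\bk$ lie in $\cM(\cB_\bk)$. Your closing remark on the inclusion $\cM(\Lambda)\subseteq\{\bh-\bh':\bh,\bh'\in\Lambda\}$ and the counterexample $\{\bzero,\kr_1,\kr_2\}$ is a useful clarification the paper does not spell out, but it does not change the argument.
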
 
\begin{proof}
Given any $\bh \in \cM(\Lambda)\setminus\{0\}$, we 
can decompose it as $\bh = \bh^+ - \bh^-$ where $\bh^+  = \max (\bh,\bzero)$ and 
$\bh^-  = \max (-\bh,\bzero)$ both in $\N^d$. Since  $\bh^+ , \bh^- \leq |\bk| \in \Lambda$ 
and $\Lambda$ is lower, then $\bh^+ , \bh^- \in\Lambda$. Also by 
construction $\bk^+ \neq \bk^-$, hence
$\bh^-\cdot\bz \not\equiv_n \bh^+\cdot\bz$ impllying
$\bh\cdot\bz \not\equiv_n 0$. If $\Lambda = \cB_k$, we use that
$\bh \leq \bh' \in \Lambda$ implies $\bzero\neq (\bh - \bh') \in \cM(\Lambda)$.

\end{proof}

%

\subsection{Equivalence between plans B and C}

\begin{lemma}
\label{LemmaPlanBLower}
Let $\Lambda \subset \N_0^d $ be a lower set, $n\in\N$, and $\bz \in \Z^d$. 
Then, 
\begin{itemize}
    \item[$$] $(n,\bz)$ is admissible for plan B, 
    \item[$\iff$] $(n,\bz)$ is admissible for plan C and
$2 \bh \cdot\bz ~\not  \equiv_n 0$ for all $\bh \in \cE(\Lambda)\setminus\{0\}$.
\end{itemize}
\end{lemma}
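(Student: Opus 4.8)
The plan is to peel off the part of the plan-B requirement that is \emph{not} already contained in plan C, and then show that, \emph{given} plan C, this residual part concentrates on the extremal elements only. Recall from the discussion following Lemma~\ref{lemmaCosineReconstruction} (summarized in Remark~\ref{Rem: admissibility}) that $(n,\bz)$ is admissible for plan B if and only if it is admissible for plan C \emph{and} in addition $\bh\cdot\bz \not\equiv_n \sigma(\bh)\cdot\bz$ for all $\bh\in\Lambda$ and all $\sigma\in\cS_{\bh}^*$. First I would record that this residual condition is equivalent to the single-index statement $V=\emptyset$, where
$$
V := \{\bk\in\Lambda\setminus\{\bzero\} : 2\bk\cdot\bz\equiv_n 0\}.
$$
Indeed, if $\bk\in V$ then taking $\sigma=-\mathrm{id}\in\cS_{\bk}^*$ gives $\bh\cdot\bz\equiv_n\sigma(\bh)\cdot\bz$ at $\bh=\bk$, violating the residual condition; conversely, a violation at some $\bh$, where $\sigma$ flips the signs on a nonempty set $T\subseteq\supp(\bh)$, produces $\bk:=\sum_{j\in T}h_j\kr_j$, which satisfies $\bk\leq\bh$ (hence $\bk\in\Lambda$ since $\Lambda$ is lower), $\bk\neq\bzero$, and $2\bk\cdot\bz=(\bh-\sigma(\bh))\cdot\bz\equiv_n 0$, i.e. $\bk\in V$.

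The main step, and the place where I expect the real work to be, is the claim that \emph{under plan C every element of $V$ is extremal}, i.e. $V\subseteq\cE(\Lambda)$. I would prove the contrapositive: let $\bk\in\Lambda\setminus\{\bzero\}$ be non-extremal, so by definition $2\bk=\bh+\bh'$ for two distinct $\bh,\bh'\in\Lambda$. The key observation is that the midpoint decomposition converts a prospective ``self-collision'' $2\bk\cdot\bz\equiv_n 0$ into a ``cross-collision'' that plan C already forbids: from $2\bk\cdot\bz\equiv_n 0$ one gets $\bh\cdot\bz\equiv_n -\bh'\cdot\bz=\sigma(\bh')\cdot\bz$ with $\sigma=-\mathrm{id}\in\cS_{\bh'}$, and since $\bh\neq\bh'$ this contradicts requirement~\eqref{requirePlanC}. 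Hence no non-extremal $\bk$ can lie in $V$. This is the crux, since it is exactly what forces the residual condition to reduce to a test over $\cE(\Lambda)$ rather than over all of $\Lambda$.

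Finally I would assemble the pieces. The condition displayed in the lemma, namely $2\bh\cdot\bz\not\equiv_n 0$ for all $\bh\in\cE(\Lambda)\setminus\{\bzero\}$, says precisely $V\cap\cE(\Lambda)=\emptyset$ (recall $\bzero\notin V$). Under plan C we have $V\subseteq\cE(\Lambda)$, so $V\cap\cE(\Lambda)=V$, and therefore this condition is equivalent to $V=\emptyset$, i.e. to the residual plan-B condition above. Chaining the equivalences gives that plan B holds if and only if plan C holds together with $V=\emptyset$, which under plan C is in turn equivalent to plan C together with $2\bh\cdot\bz\not\equiv_n 0$ for all $\bh\in\cE(\Lambda)\setminus\{\bzero\}$ — exactly the stated equivalence. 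The only care needed is the bookkeeping with $\sigma=-\mathrm{id}$ when an index equals $\bzero$ (then $-\mathrm{id}=\mathrm{id}$ and the argument degenerates harmlessly) and the verification $\bk\in\Lambda$ via the lower-set property; both are routine.
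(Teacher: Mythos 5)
Your proposal is correct and follows essentially the same route as the paper's own proof: both isolate the residual plan-B condition $\bh\cdot\bz\not\equiv_n\sigma(\bh)\cdot\bz$ for $\sigma\in\cS_{\bh}^*$, reduce it via the negative part $\bh^-=\sum_{j\in T}h_j\kr_j$ (your $\bk$) and the lower-set property to the single-index condition $2\bh^-\cdot\bz\not\equiv_n 0$, and then use the midpoint decomposition $2\bh^-=\bk+\bk'$ of a non-extremal element to convert any remaining collision into one forbidden by plan C. Your packaging through the set $V$ and the inclusion $V\subseteq\cE(\Lambda)$ is only a cosmetic reorganization of the paper's contradiction argument.
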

\begin{proof}
The direction $\implies$ is immediate since plan B admissibility implies plan C 
admissibility and that $\bh\cdot\bz ~\not  \equiv_n -\bh\cdot\bz$ for all 
$\bh \in \Lambda\setminus\{\bzero\}$, see Remark \ref{Rem: admissibility}.
As for $\Longleftarrow$, we use proof by contradiction, assume that $(n,\bz)$ is 
admissible for plan C and satisfies the identity but $(n,\bz)$ is not admissible 
for plan B. In view of Remark \ref{Rem: admissibility}, there must exists 
$\bh\in \Lambda \setminus \{\bzero\}$ and $\sigma \in \cS_{\bh}^*$ such that 
$\bh \cdot \bz \equiv_n \sigma(\bh) \cdot \bz$.
We have that $\sigma(\bh) = \bh^+ - \bh^-$ and $\bh = \bh^+ +\bh^-$ where
$\bh^+  = \max (\sigma(\bh),\bzero)$ and $\bh^-  = \max (-\sigma(\bh),\bzero)$ both in 
$\N_d^0$. Let us note that $\bh^- \neq \bzero$, since $\bh^- = \bzero$ is only possible if 
$\sigma = id$ which is excluded. The equality $\bh \cdot \bz \equiv_n \sigma(\bh) \cdot \bz$ is reformulated
$$ 
2\bh^- \cdot \bz \equiv_n 0 \equiv_n \bzero \cdot \bz. 
$$
In particular, if $2\bh^- \in \Lambda$ we already reach a contradiction. In general, 
since $\Lambda$ is lower then $\bh^- \in \Lambda$. 
If $\bh^- \not\in \cE(\Lambda)$, then $2\bh^- = \bk+\bk'$ for some $\bk \neq \bk' \in\Lambda$.
Then $2\bh^- \cdot \bz \equiv_n 0 $ implies 
$\bk \cdot\bz = - \bk' \cdot\bz$, which contradicts plan C 
admissibility. 
If $\bh^- \in \cE(\Lambda)$, one contradictions
the identity. The proof is complete
\end{proof}

\begin{remark}
The lemma stays valid if we replace $\cE(\Lambda)$ by its subset
$\cE^{\pm}(\Lambda)$ of
multi-indices $\bh\in\Lambda$ defined by: $2\bh = \bk + \bk'$ and 
$\bk\in\Lambda,\bk'\in\cM(\Lambda)$ $\Longrightarrow$ $|\bk'| = \bk$. 
\end{remark}

The lemma has immediate application if $\cE(\Lambda)$ is explicit.
For example, if $\Lambda$ consists in $\cB_k$, $\cC_k$, or $\cS_{\bone,k}$.
In general, the requirement $2 \bh \cdot\bz ~\not  \equiv_n 0$ for all 
$\bh \in \cE(\Lambda) \setminus\{0\}$ seems quite involved since its entails identifying 
$\cE(\Lambda)$. A plain and simpler condition can yield the implication $\Longleftarrow$ 
of the lemma. 
\begin{lemma}
Let $\Lambda \subset \N_0^d $ be a lower set, $n\in\N$, and $\bz \in \Z^d$. 
    \begin{itemize}
    \item[$$] $(n,\bz)$ is admissible for plan C and $n$ is odd 
    \item[$\Longrightarrow$] $(n,\bz)$ is admissible for plan B.
    \end{itemize}
\end{lemma}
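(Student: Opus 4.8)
The plan is to reduce directly to the preceding Lemma~\ref{LemmaPlanBLower}, which already characterizes plan~B admissibility for a lower set as plan~C admissibility together with the side condition $2\bh\cdot\bz\not\equiv_n 0$ for all $\bh\in\cE(\Lambda)\setminus\{\bzero\}$. Thus it suffices to show that, under the hypotheses that $(n,\bz)$ is admissible for plan~C and that $n$ is odd, this side condition holds automatically; then the $\Longleftarrow$ direction of that lemma delivers plan~B admissibility.

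The key observation, and the only place where oddness is used, is that when $n$ is odd one has $\gcd(2,n)=1$, so $2$ is a unit modulo $n$. Consequently, for any $\bh$ there is the equivalence $2\bh\cdot\bz\equiv_n 0 \iff \bh\cdot\bz\equiv_n 0$. Everything else is bookkeeping built on facts already available in the excerpt.

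Next I would recall that plan~C admissibility, upon specializing $\sigma=\mathrm{id}$ in \eqref{requirePlanC} (equivalently, reading the ``C'' column of Remark~\ref{Rem: admissibility}), forces the map $\bh\in\Lambda\mapsto\bh\cdot\bz \bmod n$ to be injective. Since $\Lambda$ is lower it contains $\bzero$, and $\bzero\cdot\bz\equiv_n 0$; injectivity then yields $\bh\cdot\bz\not\equiv_n 0$ for every $\bh\in\Lambda\setminus\{\bzero\}$. Combining this with the equivalence from the oddness step gives $2\bh\cdot\bz\not\equiv_n 0$ for all $\bh\in\Lambda\setminus\{\bzero\}$, and in particular for all $\bh\in\cE(\Lambda)\setminus\{\bzero\}$ because $\cE(\Lambda)\subset\Lambda$.

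With the side condition verified, Lemma~\ref{LemmaPlanBLower} finishes the argument. There is no genuine obstacle here: the proof is short, and the only subtlety to watch is that the oddness of $n$ is exactly what converts the potentially delicate condition on extremal elements into the trivial consequence $\bh\cdot\bz\not\equiv_n 0$ that plan~C already guarantees. I would also emphasize that oddness is sufficient but not necessary, so no finer structural analysis of $\cE(\Lambda)$ is required for this implication.
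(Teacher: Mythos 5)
Your proof is correct and matches the paper's argument in substance: the paper re-traces the proof of Lemma~\ref{LemmaPlanBLower}, arrives at $2\bh^{-}\cdot\bz\equiv_n 0$, and uses oddness of $n$ to conclude $\bh^{-}\cdot\bz\equiv_n 0$ with $\bzero\neq\bh^{-}\in\Lambda$, contradicting plan~C admissibility --- exactly your key step that $2$ is a unit modulo an odd $n$. Invoking Lemma~\ref{LemmaPlanBLower} as a black box and checking its side condition on $\cE(\Lambda)\setminus\{\bzero\}$ is merely a cleaner packaging of the same idea.
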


The proof follows the same lines as that of Lemma \ref{LemmaPlanBLower}. 
Arriving to $2\bh^- \cdot \bz \equiv_n 0$, we use $n$ is odd
and simplify to $\bh^- \cdot \bz \equiv_n 0$. But since 
$\bzero \neq \bh^- \in \Lambda$, we reach a contradiction and conclude the proof.

The above lemma has more practical convenience than the 
general result.
Since $n^*_C(\Lambda,\bz) \leq n^*_B(\Lambda,\bz)$, 
and $n^*_C(\Lambda) \leq n^*_B(\Lambda)$
for any $\bz$ and any subset $\Lambda \subset \N_0^d$, 
the following holds when we specialize to $\Lambda$ lower, 
\begin{itemize}
\item $n^*_C(\Lambda,\bz)$ is odd $\quad\implies\quad$ $n^*_B(\Lambda,\bz) = n^*_C(\Lambda,\bz)$,
\item $n^*_C(\Lambda)$ is odd $\quad\quad\implies\quad$ $n^*_B(\Lambda) = n^*_C(\Lambda)$.
\end{itemize}


\begin{remark}
In the general case, we cannot ensure $n^*_C(\Lambda)=n^*_B(\Lambda)$, as per the case 
$\Lambda=\{0,1\}$, which is a lower set in dimension $1$. For plan C, we have the condition 
$z\not\equiv_n 0 $, hence $(n=2,\bz= 1)$ is admissible and $n^*_C(\Lambda)=2$. For plan B,
we need $z\not\equiv_n 0 $ and $z\not\equiv_n -z$ which implies that $(n=3,\bz=1)$ is 
admissible and $n^*_B(\Lambda)=3$.
\end{remark}

In the next section, we consider simplex-type sets. For such sets, 
minimal conditions are required in order to have equivalence 
between plans.

\subsection{Equivalence between plans A/B/C}
The following lemma holds.
\begin{lemma}
\label{LemmaEquivSimplex}
Let $u,n\in\N$, and $\bz \in \Z^d$. For $\Lambda = \cS_{\bone,u} \subset \N_0^d $,
\begin{itemize}
\item[$$] $(n,\bz)$ is admissible for plan A 
\item[$\iff$] $(n,\bz)$ is admissible for plan B
\item[$\iff$] $(n,\bz)$ is admissible for plan C and
$2 u z_j \not\equiv_n  0$ for all $j=1,\dots,d$.
\end{itemize}
\end{lemma}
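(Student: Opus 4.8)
The plan is to recast each admissibility condition as a statement about the kernel subgroup $K:=\{\bx\in\Z^d:\bx\cdot\bz\equiv_n 0\}$ and the two difference sets of $\cM(\Lambda)$ and $\Lambda$, and then reduce the three equivalences to elementary facts about the $\ell_1$-ball $\cM(\cS_{\bone,u})=\{\bh\in\Z^d:\|\bh\|_1\le u\}$. Since $\{\sigma(\bh):\bh\in\Lambda,\ \sigma\in\cS_\bh\}=\cM(\Lambda)$, condition \eqref{requirePlanA} says exactly that $\bh\mapsto\bh\cdot\bz\bmod n$ is injective on $\cM(\Lambda)$, while \eqref{requirePlanB} says that no $\bg\in\cM(\Lambda)$ and $\bh'\in\Lambda$ with $\bg\neq\bh'$ satisfy $\bg\cdot\bz\equiv_n\bh'\cdot\bz$. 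Thus $(n,\bz)$ is admissible for plan A iff $K\cap(\cM(\Lambda)-\cM(\Lambda))=\{\bzero\}$, and for plan B iff $K\cap(\cM(\Lambda)-\Lambda)=\{\bzero\}$.

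Next I would compute the two difference sets for $\Lambda=\cS_{\bone,u}$. The triangle inequality gives $\cM(\Lambda)-\cM(\Lambda)\subset\{\bd:\|\bd\|_1\le 2u\}$, and a coordinate-wise splitting shows the converse, so equality holds. For the mixed difference I claim
\be
\cM(\Lambda)-\Lambda=\{\bd\in\Z^d:\ \|\bd_+\|_1\le u\ \text{ and }\ \|\bd\|_1\le 2u\}.
\ee
The inclusion $\subset$ is clear ($\bff\ge\bd$ forces $\bff_+\ge\bd_+$, so $\|\bff\|_1\ge\|\bd_+\|_1$); for $\supset$, given such a $\bd$ start from $\bff=\bd_+\in\cM(\Lambda)$ and $\bh'=\bd_-$, and if $\|\bd_-\|_1>u$ shift mass $\|\bd\|_1-u\le u$ from $\bh'$ into $\bff$, which keeps $\bh'\in\Lambda$ and $\|\bff\|_1=\|\bd\|_1-u\le u$.

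With these descriptions, plan A $\Longleftrightarrow$ plan B is immediate. The forward direction is Proposition \ref{lemmaImplications}. Conversely, if plan A fails there is $\bzero\neq\bd\in K$ with $\|\bd\|_1\le 2u$; since $\|\bd_+\|_1+\|\bd_-\|_1=\|\bd\|_1\le 2u$ we have $\min(\|\bd_+\|_1,\|\bd_-\|_1)\le u$, and replacing $\bd$ by $-\bd\in K$ if necessary we may assume $\|\bd_+\|_1\le u$, whence $\bd\in\cM(\Lambda)-\Lambda$ and plan B fails too. I would stress that this uses only the $\ell_1$ geometry and \emph{not} the condition $2uz_j\not\equiv_n0$.

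Finally I would match plan B with ``plan C and $2uz_j\not\equiv_n0\ (j=1,\dots,d)$'' by invoking Lemma \ref{LemmaPlanBLower} in the sharpened form of the Remark that follows it (with $\cE(\Lambda)$ replaced by $\cE^{\pm}(\Lambda)$), after establishing $\cE^{\pm}(\cS_{\bone,u})\setminus\{\bzero\}=\{u\kr_j:j=1,\dots,d\}$; then $2\bh\cdot\bz\not\equiv_n0$ on $\cE^{\pm}(\Lambda)\setminus\{\bzero\}$ reads exactly as $2uz_j\not\equiv_n0$. To identify $\cE^{\pm}$: a non-maximal $\bh\neq\bzero$ is excluded via $2\bh=(\bh+\kr_\ell)+(\bh-\kr_\ell)$ for any $\ell$ with $h_\ell\ge1$; a maximal $\bh$ with two nonzero coordinates $h_i,h_j\ge1$ is excluded via $2\bh=(\bh+\kr_i-\kr_j)+(\bh-\kr_i+\kr_j)$, both summands lying in $\Lambda$ with distinct absolute values; and each $u\kr_j$ belongs to $\cE^{\pm}$ because $2u\kr_j=\bk+\bk'$ with $\bk\in\Lambda,\ \bk'\in\cM(\Lambda)$ forces $k_j=k'_j=u$ and hence $\bk=\bk'=u\kr_j$. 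The only genuine labor is the Minkowski-difference bookkeeping and this extremal-set computation; the conceptual crux is that the averaging bound $\min(\|\bd_+\|_1,\|\bd_-\|_1)\le\tfrac12\|\bd\|_1\le u$ collapses plan A onto plan B with no arithmetic hypothesis on $\bz$, so that the real gap between the plans sits entirely in the $2uz_j\not\equiv_n0$ conditions separating B from C.
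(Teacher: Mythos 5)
Your proof is correct, but it takes a genuinely different and more modular route than the paper's. The paper proves the hard implication in one shot: assuming plan C plus $2uz_j\not\equiv_n 0$ and a collision $\bh\cdot\bz\equiv_n\bh'\cdot\bz$ in $\cM(\cS_{\bone,u})$, it recombines positive and negative parts into nonnegative $\bk=\bh^++\bh'^-$, $\bk'=\bh'^++\bh^-$ with $\|\bk\|_{1}+\|\bk'\|_{1}\le 2u$ and then runs a two-case mass-shifting argument until it hits either a plan-C violation or the congruence $2uz_1\equiv_n 0$. You instead split the statement in two: (i) A $\iff$ B follows purely from the $\ell_1$ geometry via the kernel-subgroup reformulation and the averaging bound $\min(\|\bd_+\|_1,\|\bd_-\|_1)\le u$, with no arithmetic hypothesis on $\bz$; (ii) B $\iff$ C plus the $2uz_j$ condition is delegated to Lemma~\ref{LemmaPlanBLower} (in the sharpened $\cE^{\pm}$ form of the remark following it) together with the explicit computation $\cE^{\pm}(\cS_{\bone,u})\setminus\{\bzero\}=\{u\kr_j\}_{j=1}^d$, which I checked and which matches the paper's $\cK(\cS_{\bw,u})$ in its weighted generalization. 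Your factorization makes it transparent exactly where the condition $2uz_j\not\equiv_n0$ enters (only in the B-versus-C gap) and adapts almost verbatim to $\cS_{\bw,u}$; its one soft spot is the reliance on the remark after Lemma~\ref{LemmaPlanBLower}, which the paper asserts without proof --- though it is easily justified by rereading that lemma's proof, since a decomposition $2\bh^-=\bk+\bk'$ with $\bk\in\Lambda$, $\bk'\in\cM(\Lambda)$, $|\bk'|\neq\bk$ turns $2\bh^-\cdot\bz\equiv_n0$ into the plan-C violation $\bk\cdot\bz\equiv_n(-\bk')\cdot\bz$. The paper's direct argument is self-contained but entangles the two equivalences and obscures which hypotheses drive which step.
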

\begin{proof}
The directions $\Longrightarrow$ follows from proposition 
\ref{lemmaImplications}, and from $\bh\cdot\bz \not \equiv_n -\bh\cdot\bz$
for $\bh = u e_j$. We only need to prove directions 
$\Longleftarrow$. We assume $(n,\bz)$ is admissible for plan C and satisfies 
the identity. We assume $(n,\bz)$ is not admissible for plan A and let 
$\bh \neq \bh' \in \cM(\cS_{\bone,u})$ be s.t. $\bh\cdot \bz \equiv_n \bh'\cdot \bz$. 
We decompose  
$\bh = \bh^+ - \bh^-$ and $\bh' = \bh'^+ - \bh'^-$, leading to 
$\bk\cdot \bz \equiv_n \bk'\cdot \bz$ where $\bk = \bh^+ + \bh'^-$ 
and $\bk' =\bh'^+ + \bh^-$, which both belong to $\N_0^d$
and satisfy $\bk \neq \bk'$. Moreover, 
$$ 
\|\bk \|_{\ell1} +\| \bk'\|_{\ell1}  =  \| \bh \|_{\ell1} +\| \bh' \|_{\ell1}  \leq 2u.
$$
As a result, $\|\bk \|_{\ell1} \leq u$ or $\|\bk' \|_{\ell1} \leq u$. 
Without loss of generality, we assume $\bk \in \cS_{\bone,u}$.
If $\bk' \in \cS_{\bone,u}$, we reach a contradiction with admissibility for 
plan C. If $\bk' \not \in \cS_{\bone,u}$, we distinguish two cases:

\item $\bullet$ $\|\bk \|_{\ell1} +\| \bk'\|_{\ell1} \leq 2u-1$: 
we let $\bl \in \N_0^d$
satisfying $\bl \leq \bk'$ and $\| \bl \|_{\ell_1} = \|\bk' \|_{\ell_1} - u$. Such
$\bl$ does exist since all the values $0,\dots,\|\bk'\|_{\ell_1}$ are taken 
by $\| \bx \|_{\ell_1}$ for $\bx \in \N_0^d$ s.t. $\bx \leq \bk'$.
We then define $\by = \bk - \bl $ and $\by' = \bk' - \bl$.
We have $\|\by \|_{\ell_1} \leq \|\bk \|_{\ell_1} + \| \bk' \|_{\ell_1} - u \leq u-1$
and $\|\by' \|_{\ell_1}=\|\bk'\|_{\ell_1} - \|\bl \|_{\ell_1} = u$.
In particular $\by \in \cM(\cS_{\bone,u})$, $\by' \in \cS_{\bone,u}$, 
and $|\by| \neq \by'$. Since $ \by' \cdot \bz \equiv_n \by \cdot \bz$,
we reach a contradiction with plan C admissibility.
\item $\bullet$ $\|\bk \|_{\ell1} +\| \bk'\|_{\ell1}=2u$: 
we observe that if $k_j,k_j' \geq1$ for some $j$, 
then by considering $\bk - e_j$ and $\bk' - e_j$ we get back to 
the first case. We only need to handle the case where 
$\supp(\bk) \cap \supp(\bk')=\emptyset$. Without loss of generality, 
we assume $\supp(\bk') = \{1,\dots, J\}$. 
We then let $\by = \bk$ and $\by' = \bk'$, loop over $j=1,\dots,J$ 
and within every loop, while $\|\by' \|_{\ell_1} > u$ do $\by\gets \by - e_j$ and 
$\by'\gets \by' - e_j$. This algorithm produces $\by \in \cM (\cS_{\bone,u})$, $\by' \in \cS_{\bone,u}$ 
satisfying $\by \neq \by'$ and $\|\by \|_{\ell_1}=\|\by'\|_{\ell_1} = u$.
Since $\by \cdot \bz \equiv_n \by' \cdot \bz$, then if $|\by| \neq \by'$
we reach a contradiction. We observe that $|\by| \neq \by'$ will always hold 
unless $J=1$, $\bk = \bzero$ and $\bk' = 2u~e_1$, in which case 
$\bk'\cdot \bz \equiv_n \bk\cdot \bz$ implies $ 2uz_1 \equiv_n  0$, thus a 
contradiction. 
The proof is complete. 
\end{proof}


\begin{remark}
When $n$ is odd, the assumption $2 u z_1\not\equiv_n,\dots,2 u z_d \not\equiv_n  0$ 
is not needed in the lemma. Indeed, it reduces to $u z_j \not\equiv_n  0$ for $j=1,\dots,d$,
which is redundant since already incorporated in plan C admissibility. 
\end{remark}

We know that $n^*_C(\Lambda,\bz) \leq n^*_B(\Lambda,\bz) \leq n^*_A(\Lambda,\bz)$ and
$n^*_C(\Lambda) \leq n^*_B(\Lambda) \leq n^*_A(\Lambda)$ 
for any $\Lambda \subset \N_0^d$ and any $\bz$. 
For a simplex-type set $\Lambda = \cS_{\bone,u}$, as soon as $n$ ($n^*_C(\Lambda,\bz)$ or 
$n^*_C(\Lambda)$) is odd or fulfill with $\bz$ the condition given in the lemma, we obtain
\begin{itemize}
\item $n^*_A(\Lambda,\bz) = n^*_B(\Lambda,\bz) =  n^*_C(\Lambda,\bz)$, resp.
\item $n^*_A(\Lambda) = n^*_B(\Lambda) =  n^*_C(\Lambda)$.
\end{itemize}

Inspection of Lemma \ref{LemmaEquivSimplex} and its proof shows that
the result can be adapted to general simplex-type sets. 
For $w_1,\dots,w_d>0$, $\|\bx\|_{\bw,\ell_1} :=\sum_{j=1}^d w_j |x_j|$ defines a norm over 
$\R^d$, $\cS_{\bw,u} = \{\bh\in\N^d:\|\bh\|_{\bw,\ell_1} \leq u\}$
and $\cM(\cS_{\bw,u}) = \{\bh\in\Z^d:\|\bh\|_{\bw,\ell_1} \leq u\}$
with $u$ adjusted if necessary in order to be attained by $\|\bh\|_{\bw,\ell_1}$ for 
some $\bh\in \cS_{\bw,u}$. Reiterating the proof of Lemma \ref{LemmaEquivSimplex},
we can establish the following.  

\begin{lemma}
Let $w_1,\dots,w_d, u>0$, $n\in\N$, and $\bz \in \Z^d$. For $\Lambda = \cS_{\bw,u} \subset \N_0^d $,
\begin{itemize}
	\item[$$] $(n,\bz)$ is admissible for plan A 
	\item[$\iff$] $(n,\bz)$ is admissible for plan B
    	\item[$\iff$] $(n,\bz)$ is admissible for plan C and $2\bh \cdot \bz \not \equiv_n  0$ 
    for all $\bh \in \cK(\Lambda)$.
\end{itemize}
If $n$ is odd, the last assumption involving $\bw$, $u$, $n$, and $\bz$ is not needed. 
\end{lemma}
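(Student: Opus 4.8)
The plan is to reiterate the proof of Lemma~\ref{LemmaEquivSimplex} with the $\ell_1$-norm systematically replaced by the weighted norm $\|\bx\|_{\bw,\ell_1}=\sum_{j=1}^d w_j|x_j|$, and to isolate the few places where the weights genuinely change the bookkeeping. First I would normalise $u$ by replacing it with $\max\{\|\bh\|_{\bw,\ell_1}:\bh\in\cS_{\bw,u}\}$; this alters neither $\cS_{\bw,u}$ nor $\cM(\cS_{\bw,u})=\{\bh\in\Z^d:\|\bh\|_{\bw,\ell_1}\le u\}$, and it guarantees that the $\bw$-simplex has lattice points on its boundary, so that $\cK(\Lambda)=\{\lfloor u/w_j\rfloor\kr_j:w_j\le u\}$ is the genuine family of axis-maxima. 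The implications $\Longrightarrow$ are then formal: plan~A $\Rightarrow$ plan~B $\Rightarrow$ plan~C by Proposition~\ref{lemmaImplications}, while for each $\bh\in\cK(\Lambda)$ the pair $\bh\neq-\bh$ in $\cM(\Lambda)$ forces, under plan~A, $\bh\cdot\bz\not\equiv_n-\bh\cdot\bz$, i.e. $2\bh\cdot\bz\not\equiv_n 0$.

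For the direction $\Longleftarrow$ I would argue by contradiction as in Lemma~\ref{LemmaEquivSimplex}: assuming plan~C admissibility together with the condition on $\cK(\Lambda)$ but a failure of plan~A, I pick $\bh\neq\bh'\in\cM(\cS_{\bw,u})$ with $\bh\cdot\bz\equiv_n\bh'\cdot\bz$, split $\bh=\bh^+-\bh^-$ and $\bh'=(\bh')^+-(\bh')^-$, and form $\bk=\bh^++(\bh')^-$ and $\bk'=(\bh')^++\bh^-$. These lie in $\N_0^d$, are distinct, satisfy $\bk\cdot\bz\equiv_n\bk'\cdot\bz$, and obey $\|\bk\|_{\bw,\ell_1}+\|\bk'\|_{\bw,\ell_1}=\|\bh\|_{\bw,\ell_1}+\|\bh'\|_{\bw,\ell_1}\le 2u$. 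Relabelling so that $\|\bk\|_{\bw,\ell_1}\le u$, the vector $\bk$ lies in $\cS_{\bw,u}$, and if $\bk'$ does too we contradict plan~C at once. The whole content is therefore to manufacture, out of a pair $(\bk,\bk')$ with $\|\bk'\|_{\bw,\ell_1}>u$, a new pair $(\by,\by')$ with $\by'\in\cS_{\bw,u}$, $\by\in\cM(\cS_{\bw,u})$, $\by\cdot\bz\equiv_n\by'\cdot\bz$ and $|\by|\neq\by'$, which again violates plan~C via Remark~\ref{Rem: admissibility}.

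Producing $(\by,\by')$ is the main obstacle. In the isotropic proof it relied on $\|\bx\|_{\ell_1}$ running through \emph{every} integer of $[0,\|\bk'\|_{\ell_1}]$ as $\bx$ ranges over $\{\bx:\bzero\le\bx\le\bk'\}$, so one could subtract $\bl\le\bk'$ lowering $\|\by'\|$ to exactly $u$; for $\|\cdot\|_{\bw,\ell_1}$ the attainable values are gapped, and a single subtraction may overshoot by as much as $\max_j w_j$. My fix is to perform the reduction one unit vector at a time, subtracting $\kr_j$ from \emph{both} $\by$ and $\by'$: this preserves $\by\cdot\bz\equiv_n\by'\cdot\bz$, lowers $\|\by'\|_{\bw,\ell_1}$ by $w_j$ whenever $y_j'\ge1$, and decreases or increases $\|\by\|_{\bw,\ell_1}$ by $w_j$ according to whether $y_j\ge1$ or $y_j\le0$. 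Subtracting first along common-support indices lowers both norms and strictly decreases the sum, and then, once only disjoint support remains, subtracting along $\supp(\by')$ trades weight from $\|\by'\|$ into $\|\by\|$ at constant sum. The delicate point to verify is that the process never leaves $\|\by\|_{\bw,\ell_1}$ above $u$ at the moment $\|\by'\|_{\bw,\ell_1}$ first drops to $\le u$; here one exploits that $\bk'=(\bh')^++\bh^-$ is itself a sum of two sub-vectors of $\bw$-norm $\le u$, which supplies a balanced partition of its weight into two pieces each $\le u$ and makes the reduction terminate with both $\by$ and $\by'$ admissible.

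Finally I would isolate the unique configuration in which $|\by|=\by'$ survives. As in the isotropic degenerate case $\bk=\bzero$, $\bk'=2u\,\kr_1$, the constraints $\|\bh^+\|_{\bw,\ell_1},\|\bh^-\|_{\bw,\ell_1},\|(\bh')^+\|_{\bw,\ell_1},\|(\bh')^-\|_{\bw,\ell_1}\le u$ force, in this residual branch, $\bk'$ to be supported on a single axis $j$ and equal to $2\lfloor u/w_j\rfloor\kr_j=2\bh_j$ with $\bh_j=\lfloor u/w_j\rfloor\kr_j\in\cK(\Lambda)$, and $\bk=\bzero$; then $\bk\cdot\bz\equiv_n\bk'\cdot\bz$ reads $2\bh_j\cdot\bz\equiv_n 0$, contradicting the hypothesis on $\cK(\Lambda)$. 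For the odd-$n$ statement I would invoke the simplification already used above: when $n$ is odd, $2\bh\cdot\bz\equiv_n 0$ gives $\bh\cdot\bz\equiv_n 0$ for $\bh\in\cK(\Lambda)\subset\Lambda\setminus\{\bzero\}$, which already contradicts the injectivity of $\bh\mapsto\bh\cdot\bz$ built into plan~C, so the auxiliary condition becomes redundant. The step I expect to cost the most effort is the termination and overshoot control of the weighted reduction, i.e. checking that the balanced partition inherited from the $\bh^\pm,(\bh')^\pm$ decomposition keeps $\by$ inside $\cM(\cS_{\bw,u})$ in every non-degenerate branch.
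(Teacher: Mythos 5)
You have put your finger on exactly the right spot, and your own caveat at the end is warranted: the step you flag as ``the step I expect to cost the most effort'' is a genuine gap that your fix does not close. For the record, the paper itself gives no proof here --- it only asserts that the proof of Lemma~\ref{LemmaEquivSimplex} can be reiterated --- so you are right that the whole content lies in replacing the intermediate-value property of $\|\cdot\|_{\ell_1}$ on $\{\bzero\le\bl\le\bk'\}$. But the unit-by-unit reduction cannot be made to work in general. After reducing to disjoint supports, the quantity $\|\by\|_{\bw,\ell_1}+\|\by'\|_{\bw,\ell_1}$ is conserved, so you must land $\|\by'\|_{\bw,\ell_1}$ in a window of length $2u-\|\bk\|_{\bw,\ell_1}-\|\bk'\|_{\bw,\ell_1}$, while each step moves it by some $w_j$ with $j\in\supp(\by')$. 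When every such $w_j$ exceeds the window length, the reduction jumps over the window no matter the order of subtractions; the balanced partition $\bk'=(\bh')^++\bh^-$ only controls where the reduction of $\|\by'\|_{\bw,\ell_1}$ can stop, not the simultaneous constraint $\|\by\|_{\bw,\ell_1}\le u$ on the other vector.

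In fact no repair is possible, because the statement itself fails for general weights. Take $d=2$, $\bw=(5,1)$, $u=7$, so that $\Lambda=\cS_{\bw,u}=\{(0,j):0\le j\le 7\}\cup\{(1,j):0\le j\le 2\}$ and $\cK(\Lambda)=\{(1,0),(0,7)\}$, and take $n=25$, $\bz=(3,2)$. The values $\bh'\cdot\bz \bmod 25$ for $\bh'\in\Lambda$ are $\{0,2,4,6,8,10,12,14\}\cup\{3,5,7\}$, while the values over $\cM(\Lambda)$ add $\{11,13,15,17,19,21,23\}$ (from $(0,j)$, $j<0$) and $\{1,24,18,20,22\}$ (from $(\pm1,j)$, $|j|\le2$); one checks that the only coincidences between a value of $\bv\in\cM(\Lambda)$ and a value of $\bh'\in\Lambda$ occur for $\bv=\bh'$, so $(n,\bz)$ is admissible for plan B (hence for plan C), and moreover $2z_1=6\not\equiv_{25}0$ and $14z_2=28\not\equiv_{25}0$. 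Yet $(1,-1)$ and $(-1,2)$ both lie in $\cM(\cS_{\bw,u})$ and satisfy $(1,-1)\cdot\bz=1\equiv_{25}(-1,2)\cdot\bz$, so plan A fails. This is precisely the configuration your reduction cannot reach: here $\bk=(2,0)$, $\bk'=(0,3)$, the window for $\|\bl\|_{\bw,\ell_1}$ is $[3,4]$, and the attainable values $\{0,5,10\}$ miss it. So the lemma needs a stronger hypothesis than the $\cK(\Lambda)$ condition (or a restriction on the weights) before any proof along these lines can go through; your write-up should flag this rather than attempt to push the reduction.
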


We recall that $\cK(\cS_{\bw,u})$ is explicit, see \eqref{maximalSimplex2}.
In particular, the assumption in the lemma reduces to
$\lfloor  u /{w_j} \rfloor z_j \not\equiv_n 0$ for all $j$ s.t. $u \geq w_j$.

\textcolor{red}{For simplex-type sets, the same number of points is required for 
reconstruction for all three plans $A$, $B$, and $C$.}

\newpage
\section{Specialized CBC construction}
In this section, we present a highly efficient numerical algorithm for finding an optimal 
generator vector (\textbf{generator\_z}) and optimal value of \textit{n} for a given set 
$\Lambda$. Our algorithm significantly outperforms existing approaches in terms of 
computation time and memory usage, making it suitable for large-scale problems.

\subsection{Mathematical formulation}
\begin{definition}
We let $\Lambda \subset \mathbb{N}_0^d$ be an arbitrary index set of finite 
cardinality and $(n,\bz) \in  \N \times \Z^d$ a couple of an integer and a vector.
We say that 
\begin{itemize}
\item $(n,\bz)$ is IEx-admissible for $\Lambda$ if and only if
\be
\bh \cdot \bz \not\equiv_n 0 
\quad \text{for all} \quad \bh \in \cM(\Lambda)\setminus\{\bzero\}.
\ee
\item $(n,\bz)$ is planA-admissible for $\Lambda$ if and only if
\be
\bh \cdot \bz \not\equiv_n \bh' \cdot \bz 
\quad \text{for all} \quad \bh \neq \bh' \in \cM(\Lambda).
\ee
\item $(n,\bz)$ is planB-admissible for $\Lambda$ if and only if
\be
\sigma(\bh)\cdot\bz \not\equiv_n \bh'\cdot\bz \quad\quad \mbox{for all}\quad 
\bh,\bh'  \in \Lambda,~\sigma \in \cS_{\bh},~ \sigma (\bh)\neq \bh', 
\ee
\item $(n,\bz)$ is planC-admissible for $\Lambda$ if and only if
\be
\sigma(\bh)\cdot\bz \not\equiv_n \bh'\cdot\bz \quad\quad \mbox{for all}\quad 
\bh,\bh'  \in \Lambda,~\sigma \in \cS_{\bh},~ \bh \neq \bh', 
\ee
\end{itemize}
\end{definition}

We consider an arbitrary index set $\Lambda \subset \mathbb{N}_0^d$ with finite 
cardinality. The goal is to find a couple $(\bz,n)$ consisting in a generator vector 
$\bz \in \Z^d$ and a small integer $n$ s.t.  $(\bz,n) \in \Z^d\times \N$ is 
$\square$- admissible for $\Lambda$ with $\square$ is a placeholder for 
IEx, planA, planB or planC. Exhaustive search entails incrementally testing all possible 
combinations of $n$ and $\bz$ ($\in \{0,\dots, n-1\}^d$) until the admissibility 
condition is met. A unified implementation of the search is the following
\begin{algorithm}
\begin{algorithmic}
\STATE{ {\bf Inputs:} theoretical $n_{\rm lower}$ and $n_{\rm upper}$ lower and upper bound on $n$ }
\STATE{$n=n_{\rm lower}$}
\WHILE{$n\leq n_{\rm upper}$}
\FOR{$\bz\in \{0,\dots,n-1\}^d$}
\IF{ $(\bz,n)$ is $\square$-admissible for $\Lambda$}
\RETURN {$n,\bz$ }
\ENDIF
\ENDFOR
\STATE{Increment $n$ by 1}
\ENDWHILE
\end{algorithmic}
\caption{Exhaustive search for optimal $n$ and an associated generator $\bz$}
\label{exhaustive_search}
\end{algorithm}

In light of Theorem \ref{Th: cond necessary n}, whatever the geometry 
of $\Lambda$, a minimal possible value of $n$ is $\#(\cM({\Lambda})\setminus \{0\}) +1$.

\begin{center}
\begin{tabular}{||c| c| c|c| c||} 
 \hline
  & $n_{\rm lower}$ & $n_{\rm upper}$  \\ [0.5ex] 
 \hline\hline
{\bf IEx}  &&\\ \hline
{\bf planA}&&\\ \hline
{\bf planB}&&\\ \hline
{\bf planC}&&\\ \hline
\hline
\end{tabular}
\end{center}

Checking whether $(n,\bz)$ is $\square$-admissible for $\Lambda$ entails 
iterating through elements $\bh$ in $\cM(\Lambda)$ and making sure that the inner products
$\bh \cdot \bz$ satisfy some condition. More precisely
\begin{itemize}
\item for IEx: verify that $\bh \cdot \bz \not\equiv_n 0 $ 
\item for planA: verify that $\bh \cdot \bz ~{\rm mod}~ n $ are pairwise distinct. 
\item for planB/planC: first we list all $\bh \cdot \bz ~{\rm mod}~ n$ for $\bh$ in 
$\cM(\Lambda) \setminus \Lambda$, then 
\begin{itemize}
\item for planB: iterate through $\bh'$ in $\Lambda$ and verify that $\bh' \cdot \bz ~{\rm mod}~ n $ 
has not been listed. 
\item for planC: iterate through $\bh'$ in $\Lambda$ and verify that $\bh' \cdot \bz ~{\rm mod}~ n$
has not been listed, or else it was from $\{\sigma(\bh')\}_{\sigma \in\cS_{\bh'}} - \{\bh'\}$.
\end{itemize}
\end{itemize}
For lower sets $\Lambda$, the cost of the verification step can be optimized. 
We outline below how this is performed, but first introduce necessary notation
in this regard. 

Given a lower set $\Lambda$ of finite cardinality, we introduce the sets 
$\Lambda_{1},\dots,\Lambda_{d}$ by $\Lambda_{j} := \{\bh_{[j]}: \bh\in\Lambda\}$. The 
lower property of $\Lambda$ implies that $\Lambda_{[j]}$ are also lower with
$\Lambda_{[j]} \times \{0\} \subset \Lambda_{[j+1]}$ for any $j=1,\dots,d-1$.
It does also imply the explicit characterization 
$\Lambda_{j} = \{\bh \in \N_0^j: (\bh,\bzero)\in\Lambda\}$, where $(\bh,\bzero)$ 
means extending $\bh$ with $d-j$ zeros. The mirrored sets $\cM(\Lambda_{j})$
inherit the same properties, i.e. 
$\cM( \Lambda_{[j]}) \times \{0\} \subset \cM(\Lambda_{[j+1]})$ and 
$\cM(\Lambda_{j}) := \{\bk \in \Z^j: (\bk,\bzero)\in \cM(\Lambda)\}$
for any $j=1,\dots,d-1$. We note that 
\be
\Lambda_1 = \{0,\dots, q\},\quad\quad
\cM(\Lambda_1) = \{-q,\dots,q\},\quad\quad\quad
q = \max_{\bh\in\Lambda}(h_1),
\ee
and $\Lambda_d = \Lambda$. For the sake of convenience, we 
introduce the convention $\Lambda_{0} = \emptyset$.

We let $\Lambda$ be a lower set and $\bz\in \Z^d$ arbitrary. For any
$\bh \in \cM(\Lambda)$, there holds $\bh \cdot \bz = \bh_{[d-1]} \cdot \bz_{[d-1]}$ 
if $z_d=0$ which is also equal to $\bh_{[d-2]} \cdot \bz_{[d-2]}$ if in addition 
$z_{d-1}=0$, etc. In light of this observation and the above discussion, we derive 
the lemma below. Here again $\square$ is a placeholder for IEx, planA, planB or planC.
\begin{lemma}
Let $\Lambda \subset \mathbb{N}_0^d$ be lower set 
of finite cardinality and $(n,\bz) \in  \N \times \Z^d$. The following 
chain of implications holds 
\begin{itemize}
\item $(n,\bz)$ is $\square$-admissible for $\Lambda$ $\Longrightarrow$ 
\item $(n,\bz_{[d-1]})$ is $\square$-admissible for $\Lambda_{d-1}$ $\Longrightarrow $
\item \dots 
\item $(n,z_1)$ is $\square$-admissible for $\Lambda_{1}$.
\end{itemize}

\end{lemma}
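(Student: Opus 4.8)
Because $\Longrightarrow$ is transitive and every link of the chain is the very same assertion one dimension lower — with $\Lambda$ replaced by $\Lambda_{d-1}$, $\bz$ by $\bz_{[d-1]}$, and so on, noting that $(\Lambda_{d-1})_{d-2}=\Lambda_{d-2}$ and $(\bz_{[d-1]})_{[d-2]}=\bz_{[d-2]}$ — I would establish only the single implication that $(n,\bz)$ being $\square$-admissible for $\Lambda$ forces $(n,\bz_{[d-1]})$ to be $\square$-admissible for $\Lambda_{d-1}$, and then iterate. The engine is the zero-extension map $\iota:\bg\mapsto(\bg,\bzero)$ appending one zero coordinate. By the characterizations recalled above, $\Lambda_{d-1}=\{\bg\in\N_0^{d-1}:\iota(\bg)\in\Lambda\}$ and $\cM(\Lambda_{d-1})=\{\bg\in\Z^{d-1}:\iota(\bg)\in\cM(\Lambda)\}$, so $\iota$ injects $\Lambda_{d-1}$ into $\Lambda$ and $\cM(\Lambda_{d-1})$ into $\cM(\Lambda)$, with $\iota(\bg)=\bzero$ iff $\bg=\bzero$. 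The single identity I rely on throughout is that the appended coordinate is zero, hence $\iota(\bg)\cdot\bz=\bg\cdot\bz_{[d-1]}$ for every $\bg$; thus each inner product appearing in the $(d-1)$-dimensional condition equals that of its $\iota$-image in the $d$-dimensional one.

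For IEx and planA the translation is immediate. Given $\bg\in\cM(\Lambda_{d-1})\setminus\{\bzero\}$ (respectively $\bg\neq\bg'\in\cM(\Lambda_{d-1})$), injectivity yields $\iota(\bg)\in\cM(\Lambda)\setminus\{\bzero\}$ (respectively $\iota(\bg)\neq\iota(\bg')\in\cM(\Lambda)$), so the $d$-dimensional hypothesis $\iota(\bg)\cdot\bz\not\equiv_n 0$ (respectively $\iota(\bg)\cdot\bz\not\equiv_n\iota(\bg')\cdot\bz$) becomes exactly the desired $(d-1)$-dimensional conclusion after substituting $\iota(\cdot)\cdot\bz=(\cdot)\cdot\bz_{[d-1]}$.

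For planB and planC the extra ingredient is the compatibility of the sign-change symmetries with $\iota$. Since $\supp(\iota(\bg))=\supp(\bg)\subset\{1,\dots,d-1\}$, flipping the appended coordinate acts trivially, so each $\tau\in\cS_{\bg}$ lifts to a unique $\widetilde\tau\in\cS_{\iota(\bg)}$ with $\widetilde\tau(\iota(\bg))=\iota(\tau(\bg))$, and the correspondence is a bijection of orbits. For planC, given $\bg\neq\bg'\in\Lambda_{d-1}$ and $\tau\in\cS_{\bg}$, apply $d$-dimensional admissibility to $\iota(\bg)\neq\iota(\bg')$ and $\widetilde\tau$; the identity then yields $\tau(\bg)\cdot\bz_{[d-1]}\not\equiv_n\bg'\cdot\bz_{[d-1]}$. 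For planB the argument is identical, except that the active constraint is $\tau(\bg)\neq\bg'$, which transports to $\widetilde\tau(\iota(\bg))=\iota(\tau(\bg))\neq\iota(\bg')$ by injectivity of $\iota$.

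I expect the one delicate point — the main obstacle — to be precisely this faithful transport of the inequality constraints ($\bg\neq\bg'$ for planC, $\tau(\bg)\neq\bg'$ for planB) through $\iota$: it succeeds only because the support of $\iota(\bg)$ omits the appended coordinate, so that $\cS_{\iota(\bg)}$ contributes no symmetry beyond those already in $\cS_{\bg}$ and the orbit sizes $2^{|\iota(\bg)|_0}=2^{|\bg|_0}$ match. Everything else reduces to the bookkeeping substitution $\iota(\cdot)\cdot\bz=(\cdot)\cdot\bz_{[d-1]}$.
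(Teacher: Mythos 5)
Your proposal is correct and follows essentially the same route as the paper, which derives the lemma from exactly the two facts you isolate: the inclusion $\cM(\Lambda_{[j]})\times\{0\}\subset\cM(\Lambda_{[j+1]})$ (with the characterization $\Lambda_j=\{\bh\in\N_0^j:(\bh,\bzero)\in\Lambda\}$) and the identity that the inner product of a zero-extended index depends only on the leading coordinates. Your write-up is in fact more detailed than the paper's, which leaves the transport of the plan B/C symmetry constraints through the zero-extension implicit.
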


The above lemma has an obvious consequence, checking whether 
$(n,\bz)$ is $\square$-admissible for $\Lambda$ can be broken into an iterative scheme, 
as follows
\begin{itemize}
\item $1^{st}$ iteration: check whether $(n,z_1)$ is $\square$-admissible for $\Lambda_{1}$;
\item $2^{nd}$ iteration: check whether $(n,\bz_{[2]})$ is $\square$-admissible for $\Lambda_2$;   
\item \dots;
\item $d^{th}$ iteration: finally check whether $(\bz,n)$ is $\square$-admissible for $\Lambda$.  
\end{itemize}

The first iteration is reduced to checking $(n,z_1)$ is $\square$-admissible
for $\{1,\dots,q \}$, where $q = \max_{\bh\in\Lambda}(h_1)$. For instance, for 
planIEx this reduces to checking $a z_1 \not\equiv_n 0$ for all $a \in \{-q ,\dots,q \} - \{0\}$ 
while for planA/planB it reduces to checking $a z_1 ~{\rm mod}~ n$ are pairwise distinct 
for all $a \in \{-q ,\dots,q \}$, which is equivalent to verifying $a z_1 \not\equiv_n 0$ for 
$a \in \{1 ,\dots,2q\}$, and clearly more restricting that planIEx. As for planC,

In order to complete the scheme, we only need to outline how 
to check efficiently whether $(n,\bz_{[j+1]})$ is $\square$-admissible for 
$\Lambda_{j+1}$ once we have already established that $(n,\bz_{[j]})$ is 
$\square$-admissible for $\Lambda_j$. 
When considering any $\bh$ in $\cM(\Lambda_{j+1})$, we 
necessarily have that $\bh$ can be expressed as $\bh = (\widehat \bh, h_{j+1})$ with 
$\widehat \bh \in \cM(\Lambda_{j})$ and $h_{j+1} = \pm p$ with $p$ being a non-negative 
integer. Furthermore, we have that $\bh\cdot \bz_{[j+1]} =  \widehat \bh\cdot \bz_{[j]} \pm p z_{j+1} $. Assuming we have already computed all $\widehat \bh\cdot \bz_{[j]}$ for 
$\widehat \bh \in \cM(\Lambda_{j})$ and these values modulo $n$ are pairwise distinct, 
we simply iterate through the elements $\bh$ in $\cM(\Lambda_{j+1})$, calculate 
$\bh\cdot \bz_{[j+1]}$ as prescribed, and check whether the result has not already been 
computed (modulo $n$).

\begin{algorithm}
\begin{algorithmic}
\STATE{Let $n_{min}=\#(\cM(\Lambda))$ and $n_{max}=2\#(\cM(\Lambda)\oplus \cM(\Lambda))$}
\STATE{$n=n_{min}$}
\WHILE{$n< n_{max}$}
\FOR{$\bz\in \{1,\dots,n-1\}^d$}
\IF{ $(\bz,n)$ is admissible for $\Lambda$}
\RETURN {$(\bz,n)$ }
\ENDIF
\ENDFOR
\STATE{Increment $n$ by 1}
\ENDWHILE
\end{algorithmic}
\caption{Exhaustive search for $n$ and $\bz$}
\end{algorithm}

\newpage
In contrast, our approach 
make the requirement independent of $n$, i.e.
\be
\bh \cdot \bz \neq \bh' \cdot \bz 
\quad \text{for all} \quad \bh \neq \bh' \in \cM(\Lambda).
\ee
We search for a candidate $\bz$, once found, it is straightforward to determine $n$ that 
yields the above condition. We will particularly exploit the lower structure of $\Lambda$.

Given $\Lambda$ lower, we introduce $\widehat\Lambda\subset\N^{d-1}$ by
$\widehat\Lambda = \{\widehat \bh := (h_1,\dots,h_{d-1})^\top: \bh \in \Lambda\}$. 
By construction, $\widehat\Lambda$ is lower and 
$\widehat\Lambda \times \{0\}:= \{(\widehat \bh,0): \widehat \bh\in \widehat\Lambda\}$
is contained in $\Lambda$. In addition 
$\cM(\widehat\Lambda \otimes \{0\}):= \{(\widehat \bh,0): \widehat \bh\in \cM(\widehat\Lambda)\}$
is contained in $\cM(\Lambda)$. Given any index $\bh \in \Z^d$, we have that 
$\bh \cdot \bz = \widehat \bh \cdot \widehat \bz$ where $\bz = (z_1,\dots,z_{d-1})^\top$.
In particular
%
requiring $\bh \cdot \bz \neq \bh' \cdot \bz $ for all $\bh\neq \bh' \in \cM(\Lambda)$ is 
equivalent to requiring that 
$\widehat \bh \cdot \widehat \bz \neq \widehat \bh' \cdot \widehat \bz$ for any 
$\widehat \bh \neq \widehat \bh' \in \cM(\widehat\Lambda) $.

$$
z_{k+1} = 2 |max \{\bh\cdot \bz: \bh \in \Lambda_k\}| + 1
$$

$$
z_{k+1} = z_k + 1, \dots
$$
 
To enhance the algorithm's efficiency, we employ an optimal approach for storing information 
to minimize computations. This is achieved through a dictionary $\mathcal{D}(\Lambda)$ with keys $\bh \in \Lambda$, and
associated sets $\mathcal{D}(\bh) := \{\sigma(\bh) \cdot \bz \, | \, \sigma \in S(d)\}$, 
considering a given candidate $\bz$.

This dictionary structure proves valuable when working with lower sets. We generate a suitable $\bz$ component by component, focusing on the sequence of sets $\Lambda_i$ for $1 \leq i \leq d$, defined as:
\begin{equation}
    \Lambda_i = \{P_i(\bh) \, | \, \bh \in \Lambda\},
\end{equation}
where $P_i$ is the projector over the first $i$ dimensions. In particular,we note that since $\Lambda$ is lower then $\Lambda_1 \subset \Lambda_2 \subset \cdots \subset \Lambda_d = \Lambda$.

Our component-by-component algorithm advances by finding 
$\bz_i := P_i(\bz)$ satisfying 
\begin{equation}
    \bh \cdot \bz_i \neq \bh' \cdot \bz_i \quad \text{for all} \quad \bh \neq \bh' \in \cM(\Lambda_i).
\end{equation}
To optimize computations, we inductively build dictionaries  
$\mathcal{D}(\Lambda_i) $ for $i=1,\dots,d$.
We note in particular that computing the set $\cD (\bh)$ for 
$\bh \in \Lambda_{i+1}$ is fast given that $\cD (\widehat\bh)$ is 
already computed.
This approach enhances the efficiency of the algorithm and is particularly useful when working with large sets $\Lambda$ in high-dimensional spaces.

\subsection{Building Dictionaries}

The algorithm starts by building dictionaries to efficiently store and manipulate scalar products of elements in $\Lambda$ with the generator vectors. We define the following functions:

\begin{algorithm}
\caption{check\_z\_good}
\label{alg:check_z_good}
\textbf{Inputs} \\
\hspace*{\algorithmicindent} $\Lambda$: a set of indices, \\
\hspace*{\algorithmicindent} $\bz$: a generator \\ 
\hspace*{\algorithmicindent} $i$: an integer in $\{1,\dots,d-1\}$, \\
\hspace*{\algorithmicindent} $\cD_{\bz}(\Lambda_i)$: the associated dictionary \\
 \textbf{Outputs} \\
\hspace*{\algorithmicindent} $\text{goodness\_check}$: True if no repetition in scalar products, False otherwise\\
\hspace*{\algorithmicindent} $\text{new\_dic}$: Updated dictionary of scalar products.
\end{algorithm}

\begin{algorithm}
\caption{good\_z\_finder\_previous}
\label{alg:good_z_finder_previous}
\textbf{Inputs} \\
\hspace*{\algorithmicindent} $\Lambda$: a set of indices \\
\hspace*{\algorithmicindent} $\text{maximals}$: List of maximal elements
\hspace*{\algorithmicindent} $\text{generator\_z}$: generator \\
\hspace*{\algorithmicindent} $\text{previous\_lam\_dic}$: dictionary of scalar products \\
\textbf{Outputs} \\
\hspace*{\algorithmicindent} $\text{elapsed\_time}$: Time taken to compute 
$\text{generator\_z}$
\end{algorithm}

\begin{algorithm}
\caption{find\_n}
\label{alg:find_n}
\textbf{Inputs} \\
\hspace*{\algorithmicindent} $\text{previous\_lam\_dic}$: Dictionary of scalar products \\
\hspace*{\algorithmicindent} $\text{n\_max}$: Maximum value of \text{n} to be checked \\
\textbf{Outputs} \\
\hspace*{\algorithmicindent} $\text{n}$: Optimal value of \text{n}
\end{algorithm}

%

\newpage

\subsection{Building Dictionaries for Efficient Scalar Product Storage}

The algorithm starts by building dictionaries to efficiently store and manipulate scalar products of elements in $\Lambda$ with the generator vectors. By using dictionaries, we can minimize the memory usage by representing the scalar products in a compressed form, while still allowing for fast access and manipulation. This is particularly important when dealing with large sets of vectors.

\subsection{Efficient Generator Search}

The core of the algorithm is the function \texttt{good\_z\_finder\_previous}. It efficiently searches for a good set of generator vectors (\textbf{generator\_z}) dimension by dimension. Unlike traditional approaches, our algorithm does not depend on the value of \textit{n}, allowing us to perform this step only once and save significant computation time. This key insight is the main contributor to our algorithm's efficiency.

\subsection{Efficient \textit{n} Search}

The function \texttt{find\_n} efficiently finds the optimal value of \textit{n} by iterating through all possible values up to a given maximum (\textit{n\_max}), for good practice, since the  (\textit{n\_max}) given, is theoretically the biggest $n$ possible($n$ for the case of the block that contains $\Lambda$). By carefully checking for repetitions in the scalar products modulo \textit{n} for each element in the dictionary, we avoid unnecessary computations and minimize the search space. This optimization results in substantial time savings, especially when dealing with large dictionaries.

\subsection{Algorithm Overview}

The entire algorithm proceeds as follows:


\begin{enumerate}
    \item Initialize variables $\text{maximals}$ and compute $\#\mathcal{M}(\Lambda)$.
    \item Call \texttt{good\_z\_finder\_previous} to find the optimal set of generator vectors (\textbf{generator\_z}).
    \item Call \texttt{find\_n} to find the optimal value of \textit{n} given the dictionary of scalar products (\textbf{previous\_lam\_dic}).
    \item Return \textbf{generator\_z}, \textbf{previous\_lam\_dic}, and the total computation time.
\end{enumerate}

Our numerical algorithm for finding the optimal set of generator vectors and the optimal value of \textit{n} presents a significant advancement in efficiency over the classic approach. By employing dictionaries for scalar product storage and carefully optimizing the search space for \textit{n}, we achieve remarkable time and memory savings. Our algorithm is highly suitable for large-scale problems and can efficiently handle large sets of vectors, making it a valuable contribution to the field of numerical mathematics.

\subsection{Algorithm \texttt{check\_z\_good} for Checking Generator Validity}

The presented algorithm, named \texttt{check\_z\_good}, aims to check the validity of a given generator value for a list of lambda elements $\text{lam}$ and a dictionary of previous lambda elements $\text{previous\_lam\_dic}$. The algorithm takes a generator value $\text{generator\_z}$ as input and checks whether it leads to any repetitions in the elements of $\Lambda$.

The algorithm starts by initializing an empty dictionary called $\text{new\_dic}$, which will store the new lambda elements generated using the given generator value.

The algorithm first handles the case where $\text{previous\_lam\_dic}$ is empty, i.e., for the initial dimension ($d = 1$). In this case, the algorithm initializes a set called $seen$ to store the scalar products modulo $n$ for each element in $\Lambda$. It then iterates over each element $k$ in $\Lambda$ and checks if the first component of $k$ is zero. If it is, the algorithm adds 0 to the $seen$ set and sets the new lambda element for $k$ to be $0$.Otherwise, it calculates the scalar products $\alpha$ using the generator value and checks if any of the scalar products are already present in $seen$ or if there are any repetitions in $\alpha$. If either of these conditions is true, the algorithm immediately returns \texttt{False} along with the original $\text{previous\_lam\_dic}$. 

If no repetitions are found for the initial dimension, the algorithm proceeds to handle the case where $\text{previous\_lam\_dic}$ is not empty. In this case, the algorithm initializes the $seen$ set with the scalar products modulo $n$ for all elements in $\text{previous\_lam\_dic}$. It then iterates over each element $k$ in $\Lambda$ and checks if the last component of $k$ is zero. If it is, the algorithm sets the new lambda element for $k$ to be the same as the corresponding element in $\text{previous\_lam\_dic}$. Otherwise, it calculates the scalar products $\alpha$ using the generator value and the scalar products from $\text{previous\_lam\_dic}$ and checks for repetitions as in the previous case. If repetitions are found, the algorithm returns \texttt{False} along with the original $\text{previous\_lam\_dic}$.

If no repetitions are found for both cases, the algorithm returns \texttt{True} along with the new dictionary $\text{new\_dic}$ containing the updated lambda elements.

In terms of time complexity, the algorithm's performance depends on the size of the list $\text{lam}$ and the number of elements in $\text{previous\_lam\_dic}$. The use of sets to efficiently store and check for scalar product repetitions contributes to the algorithm's overall efficiency, making it suitable for practical use.

\subsection{Algorithm \texttt{good\_z\_finder\_previous} for Finding Generator Values}

The presented algorithm, named \texttt{good\_z\_finder\_previous}, aims to find the generator values for a given list of points in $\Lambda$ and a list of maximal elements $\text{maximals}$. The algorithm iteratively determines the generator values $\text{generator\_z}$ and stores the corresponding elements in a dictionary called $\text{previous\_lam\_dic}$.

The algorithm starts by calculating the cardinality of the set $\mathcal{M}(\Lambda)$ and storing it in the variable $m$. It also determines the maximum value along each direction from the list $\text{maximals}$ and stores it in the variable $\text{max\_values}$.

The algorithm then initializes $\text{generator\_z}$ as a list containing a single value, which starts with 1. It iterates over the dimensions of the given points in $\Lambda$, starting with $d = 0$. Within this loop, the algorithm checks if there are any repetitions in the elements of $\Lambda$ for the current dimension. If there are no repetitions, the algorithm increments the dimension and the corresponding generator value, and the loop continues.

The main loop continues until $d\text{dim}$. In each iteration, the algorithm uses a helper function called $\text{check\_z\_good}$ to check if there are any repetitions in the elements of $\Lambda$ for the current dimension and the updated generator values. If no repetitions are found, the algorithm updates the $\text{previous\_lam\_dic}$ dictionary with the current elements and increments $d$ to move to the next dimension.

The algorithm efficiently detects repetitions by utilizing the helper function $\text{check\_z\_good}$, which efficiently checks for scalar product repetitions using a set called $seen$ to store the scalar products modulo $n$ for each element in $\Lambda$. By storing the previously calculated scalar products in $seen$, the algorithm avoids unnecessary computations and minimizes the number of iterations.

The efficiency of the algorithm is further enhanced by breaking inner loops and continuing to the next iteration when repetitions are detected. This approach reduces computational overhead and improves overall performance.

The algorithm also records the start time before the main loop and calculates the elapsed time after the loop to measure its runtime performance. Finally, it prints the generator values $\text{generator\_z}$ and the elapsed time in seconds.

In terms of time complexity, the algorithm's performance mainly depends on the number of dimensions and the size of the given points in $\Lambda$. The use of efficient checks for repetitions and the avoidance of unnecessary computations contribute to its reasonable time complexity for practical use.

\subsection{Algorithm \texttt{find\_n} for finding a good candidate $n$}


The presented algorithm, named \texttt{find\_n}, aims to find the smallest value of $n$ that ensures there are no repetitions in the elements of the given dictionary $\text{previous\_lam\_dic}$. This dictionary contains elements of the set $\Lambda$, which are key-value pairs representing unique elements $k$ and their corresponding scalar products $\alpha$ modulo $n$. The maximum value of $n$ to be checked is provided as $\text{n\_max}$.

The algorithm starts by extracting the elements of $\text{previous\_lam\_dic}$ and storing them in a list called $\text{lam}$. It calculates the cardinality of the set $\mathcal{M}(\Lambda)$ and initializes $n$ with this value as a theoretical lower bound.

The main loop of the algorithm iterates while $\text{found\_n}$ is \texttt{False} and $n$ is less than or equal to $\text{n\_max}$. In each iteration, the algorithm checks for repetitions by maintaining a set called $seen$, which stores the scalar products modulo $n$ for all elements $k$ in $\text{lam}$. If a repetition is found, $n$ is incremented by one, and the loop restarts to check the next value of $n$.

The efficiency of this algorithm lies in its ability to efficiently detect repetitions using the $seen$ set without the need to iterate through all the elements of the dictionary for each value of $n$. It takes advantage of the fact that for each $k$ in $\text{lam}$, the scalar products $\alpha$ for the previous $n$ are already stored in the $seen$ set. Therefore, it only needs to calculate the scalar products modulo $n$ for each $k$ and check if any of them already exist in $seen$.

The algorithm also avoids unnecessary computations by breaking the inner loop and continuing to the next iteration if a repetition is found. This way, it minimizes the number of iterations and computational overhead.

In terms of time complexity, the algorithm's performance is largely determined by the number of iterations in the main loop. The number of iterations depends on the size of $\text{lam}$ and the value of $\text{n\_max}$. In the best-case scenario, where there are no repetitions and the algorithm finds the smallest $n$ quickly, it can terminate in a few iterations. However, in the worst-case scenario, where $n$ needs to be incremented significantly, the number of iterations can be larger. Nonetheless, due to its efficient approach in using the $seen$ set, the algorithm's overall time complexity is considered to be reasonable for practical use.

\newpage


\section{Enumeration of lattices points in $\ell_1$-balls}
We consider the simplex notation 
$\cS_{l}^d= \{\bk \in \N_0^d: k_1+\dots+k_d \leq l \}$
for $d\geq1$ and $l\geq0$. We also recall  
$\cM(\cS_l^{d})= \{\bk \in \Z^j: |k_1|+\dots+|k_d| \leq l \}$
is associated symmetrized set. 
  
\begin{proposition}
Let $d>0$ and $k\in\N$ a positive integer. Then 
\begin{equation}
\left\{
\begin{array}{rl}
\#({\cM}(\cS_{k}^{1}))&=2k+1 \\
\displaystyle
\#({\cM}(\cS_{k}^{d+1}) & \displaystyle =
\#({\cM}(\cS_{k}^{d})) + 2\sum_{l=0}^{k-1}\#({\cM}(\cS_{l}^{d})) 
\end{array}\right.
\label{eq M_triangle}
\end{equation}
\end{proposition}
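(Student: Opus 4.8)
The plan is to prove the base case directly and obtain the recurrence by slicing the $\ell_1$-ball along its last coordinate. For the base case $d=1$, the set $\cM(\cS_k^1)=\{j\in\Z:|j|\leq k\}$ is simply the block of integers $\{-k,\dots,k\}$, which has exactly $2k+1$ elements; this gives the first line of \eqref{eq M_triangle} immediately.

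For the recurrence, I would write a generic point of $\cM(\cS_k^{d+1})$ as $(\bh,m)$ with $\bh\in\Z^d$ and $m\in\Z$ the last coordinate, and use the defining inequality $\|\bh\|_1+|m|\leq k$, i.e.\ $\|\bh\|_1\leq k-|m|$. The key observation is that once $m$ is fixed, the admissible $\bh$ are precisely those in $\cM(\cS^d_{k-|m|})$ (and the slice is empty when $|m|>k$). Partitioning $\cM(\cS_k^{d+1})$ into the disjoint slices indexed by $m\in\{-k,\dots,k\}$ therefore yields
\be
\#(\cM(\cS_k^{d+1})) = \sum_{m=-k}^{k} \#(\cM(\cS^d_{k-|m|})).
\ee
Next I would isolate the $m=0$ term and pair each $m=l>0$ with $m=-l$, which contribute equally by the symmetry $|m|=|-m|$, giving
\be
\#(\cM(\cS_k^{d+1})) = \#(\cM(\cS^d_{k})) + 2\sum_{l=1}^{k}\#(\cM(\cS^d_{k-l})).
\ee
Finally, the substitution $l\mapsto k-l$ sends the index range $l\in\{1,\dots,k\}$ to $\{0,\dots,k-1\}$ and turns $\cS^d_{k-l}$ into $\cS^d_{l}$, producing exactly the second line of \eqref{eq M_triangle}.

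There is essentially no hard part here: the only place requiring a moment of care is the reindexing in the final step, where one must check that the symmetric-pair summation over $|m|\in\{1,\dots,k\}$ matches the stated sum over $l\in\{0,\dots,k-1\}$ after the shift. The disjointness of the slices (each point has a unique last coordinate) and their exhaustiveness are immediate, so no inclusion--exclusion or overcounting argument is needed, and the identity follows purely by bookkeeping on the coordinate $m$.
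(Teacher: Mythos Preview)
Your proof is correct and follows essentially the same approach as the paper: both slice $\cM(\cS_k^{d+1})$ along the last coordinate into the disjoint pieces $\cM(\cS_{k-|j|}^d)\times\{j\}$ for $j\in\{-k,\dots,k\}$, then use the symmetry $j\leftrightarrow -j$ to obtain the factor of $2$ in the sum. Your write-up is slightly more explicit about the final reindexing $l\mapsto k-l$, but the argument is the same.
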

\begin{proof}
We have that $\cM(\cS_{k}^{1}) = \{ h \in \Z : |h|\leq k \}$ is indeed of cardinality $2k+1$.
Then $\cM(\cS_{k}^{d+1}) = \{ \bh \in \Z^{d+1} :  |h_1|+\dots+|h_d|+|h_{d+1}| \leq k \}$
is the disjoint union of 
$\cM(\cS_{k-|j|}^{d})\times \{j\} $ for $j$ in $\{-k,\dots,0,\dots,k\}$. This yields the 
identity and completes the proof.
\end{proof}

Using recurrence \eqref{eq M_triangle}, we can show that 
$M_{d}(k):=\#({\cM}(\cS_{k}^{d}))$ is a polynomial of degree 
at most $d$ in $k$. We will formulate its coefficients using 
Bernoulli numbers. We write
$$
M_{d}(k)=\sum_{p=0}^d c_{p,d} k^p.
$$
From the identified recurrence \eqref{eq M_triangle}, $M_1(k)=1+2k$
hence $c_{0,1}=1$ and $c_{1,1}=2$. Otherwise 
\begin{equation*}
\begin{array}{rl}
M_{d+1}(k) 
 = \displaystyle M_{d}(k)+ 2  \sum_{l=0}^{k-1} M_{d}(l)    
&= \displaystyle \sum_{p=0}^d c_{p,d}k^p + 2 \sum_{l=0}^{k-1} \sum_{p=0}^d c_{p,d}l^p\\
& = \displaystyle \sum_{p=0}^d c_{p,d} k^p  + 2 \sum_{p=0}^d c_{p,d} \sum_{l=0}^{k-1} l^p,\end{array}
\end{equation*}
with $0^0=1$. We make use of the identity relating the 
sums of powers to Bernoulli numbers,
$$
\sum_{l=0}^{k-1} l^p = \sum_{i=0}^p \binom{p}{i} B_i  \frac{k^{p+1-i}}{p+1-i}
= \sum_{j=1}^{p+1} \binom{p}{j-1} B_{p-j+1}  \frac{k^j}{j},
$$
where $\binom{p}{i}$ are binomial coefficients, and $B_i$ is the $i$-th 
Bernoulli number. Therefore 
\begin{equation*}
\begin{array}{rl}
M_{d+1}(k) 
& = \displaystyle 
\sum_{p=0}^d c_{p,d} k^p  + 2 \sum_{p=0}^d c_{p,d}
\sum_{j=1}^{p+1} \binom{p}{j-1} B_{p-j+1}  \frac{k^j}{j} \\
& = \displaystyle \sum_{p=0}^d c_{p,d} k^p  + 
2 \sum_{j=1}^{d+1} \frac{k^j}{j}
\sum_{p=j-1}^d  c_{p,d} \binom{p}{j-1} B_{p-j+1}
\end{array}
\end{equation*} 
Using identification, we obtain
\begin{equation}
\left\{
\begin{array}{rl}
c_{0,{d+1}} & =c_{0,d} \\ 
c_{p,d+1} &= \displaystyle c_{p,d} +\frac 2p \sum_{i=p-1}^{d} c_{i,d} 
\binom{i}{p-1} B_{i-(p-1)} ,\quad p=1,\dots,d\\
c_{d+1,d+1} & = \dfrac{2}{d+1} c_{d,d}  \\
\end{array}\right.
\end{equation}
In the following table we give the coefficients of the polynomials for $0\leq d \leq 7 $:
$$
\begin{bmatrix}
1 &   &   &   &   &   &   \\
1 & 2 &   &   &   &   &   \\
1 & 2 & 2 &   &   &   &  \\
1 & \frac83 & 2 & \frac43 &   &   &   \\
1 & \frac83 & \frac{10}{3} & \frac43 & \frac23 &   &   \\
1 & \frac{46}{15} & \frac{10}{3} & \frac83 & \frac23 & \frac{4}{15} &  \\
1 & \frac{46}{15} & \frac{196}{45} & \frac83 & \frac{14}{9} & \frac{4}{15} & \frac{4}{45} \\
\end{bmatrix}$$
For example, in dimension $d=6$ one has
$$
\#({\cM}(\cS_{k}^{6}))= 1 + \frac{46}{15}k + \frac{196}{45} k^2 + \frac83 k^3+ \frac{14}{9}k^4 + \frac{4}{15}k^5 + \frac{4}{45}k^6.
$$
Explicit formulas can be identified for some of the coefficients. 
It is easily verified that
\be
c_{0,d} = 1,\quad\quad\quad
c_{d,d} = \frac {2^d}{d!} ,\quad\quad d\geq0.
\ee   
$$
c_{d,d+1} = c_{d,d} + c_{d-1,d} - c_{d,d} d B_1  ,\quad\quad\quad
$$

\begin{corollary}
We can prove using induction:
\begin{itemize}
    \item   $ c_{dd}=\dfrac{2^d}{d!}$
    \item  $c_{d,d+1}=c_{d,d}=\dfrac{2^{d}}{d!}$
    \item $c_{d,d+2}=\dfrac{2^{d}}{d!}+\dfrac{2^{d+1}}{(d-1)!}B_2$
    \item  $c_{d,d+3}=c_{d,d+2}=\dfrac{2^{d}}{d!}+\dfrac{2^{d+1}}{(d-1)!}B_2$
\end{itemize}
\end{corollary}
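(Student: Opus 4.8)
The plan is to prove the four identities by induction on $d$, using the coefficient recurrence derived above together with the elementary facts $B_0=1$, $B_1=-\tfrac12$, $B_2=\tfrac16$, and $B_{2j+1}=0$ for $j\ge 1$ (so that $B_3=0$). Specializing that recurrence to the slot $p=d$ with second index $m\ge d$ gives
\be
c_{d,m+1} = c_{d,m} + \frac{2}{d}\sum_{i=d-1}^{m} c_{i,m}\binom{i}{d-1}B_{i-d+1},\qquad m\ge d,
\ee
so each identity is obtained by advancing the second subscript by one and feeding in identities already established at level $d-1$. I also note that items 2 and 4 both assert the pattern $c_{d,m}=c_{d,m-1}$ whenever $m-d$ is odd; conceptually this is the shadow of the functional equation $M_d(-k-1)=(-1)^d M_d(k)$ (Ehrhart reciprocity for the cross-polytope), which forces the shifted polynomial $M_d(k-\tfrac12)$ to have the parity of $d$ and thereby kills the relevant cross terms.

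For item 1 I would take the diagonal relation $c_{d+1,d+1}=\tfrac{2}{d+1}c_{d,d}$ with $c_{1,1}=2$ as the base of a trivial induction giving $c_{d,d}=2^d/d!$ (this is already recorded in the text). For item 2 I set $m=d$ in the displayed recurrence: only $i=d-1$ and $i=d$ survive, producing $c_{d,d+1}=c_{d,d}+\tfrac{2}{d}\big(c_{d-1,d}-\tfrac{d}{2}c_{d,d}\big)$. The correction vanishes exactly when $c_{d-1,d}=\tfrac{d}{2}c_{d,d}=\tfrac{2^{d-1}}{(d-1)!}=c_{d-1,d-1}$, which is item 2 at level $d-1$; hence item 2 propagates upward from the base case $c_{0,1}=c_{0,0}=1$.

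Items 3 and 4 follow the same mechanism. For item 3 I advance from $c_{d,d+1}$ to $c_{d,d+2}$: the surviving indices are $i=d-1,d,d+1$, the two contributions proportional to $c_{d,d+1}$ cancel automatically, and one is left with $c_{d,d+2}=\tfrac{2}{d}c_{d-1,d+1}+\tfrac{2}{d}\binom{d+1}{2}c_{d+1,d+1}B_2$. Substituting item 3 at level $d-1$ for $c_{d-1,d+1}$ and item 1 for $c_{d+1,d+1}$, the factorial bookkeeping collapses the two $B_2$-terms into $\tfrac{2^{d+1}}{(d-1)!}B_2$ and yields the claimed value. For item 4 I advance from $c_{d,d+2}$ to $c_{d,d+3}$; now the index $i=d+2$ would carry $B_3=0$ and drops out, so again only $i=d-1,d,d+1$ remain. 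Rewriting $c_{d-1,d+2}=c_{d-1,d+1}$ by item 4 at level $d-1$, then inserting item 3 (for $c_{d-1,d+1}$ and $c_{d,d+2}$) and item 2 (for $c_{d+1,d+2}=c_{d+1,d+1}$), the bracketed correction is seen to vanish identically, i.e. $c_{d,d+3}=c_{d,d+2}$. Base cases at small $d$ can be read directly off the displayed table of coefficients.

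The main obstacle is not conceptual but the careful algebra: at each step one must check that the non-Bernoulli part cancels against the previous-level identity and that the several $B_2$-terms, each carrying a different factorial and binomial, recombine into a single clean coefficient; the vanishing of $B_3$ is precisely what makes item 4 possible, mirroring the parity explanation above. A cleaner but less self-contained alternative would be to first establish the closed form $M_d(k)=\sum_{i=0}^{d}2^i\binom{d}{i}\binom{k}{i}$ by a direct lattice-point count (choose the support of $\bh$, assign $2^i$ signs, and count the compositions of the absolute values), and then extract $[k^d]M_{d+j}(k)$ through the top Stirling numbers of the first kind, which are exactly the power sums governed by the same Bernoulli numbers.
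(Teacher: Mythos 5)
Your induction is correct and is exactly the argument the paper gestures at but never writes out (the corollary is stated with only the remark ``we can prove using induction''): specializing the coefficient recurrence to $p=d$, the $B_1$-term cancels the leading $c_{d,m}$, the $B_0$-term feeds in the identity one level down in $d$, and $B_3=0$ is what makes the fourth item close up; I checked the factorial bookkeeping in your item~3 step ($\tfrac{1}{d\,(d-2)!}+\tfrac{1}{d!}=\tfrac{1}{(d-1)!}$) and it goes through, and the base cases agree with the displayed table. Your closing remark about the closed form $M_d(k)=\sum_{i=0}^{d}2^i\binom{d}{i}\binom{k}{i}$ is a genuinely cleaner alternative route the paper does not take, but as written your inductive proof already suffices.
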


\newpage

\section{Introduction}
The introduction introduces the context and summarizes the
manuscript. It is importantly to clearly state the contributions of
this piece of work. The next two paragraphs are text filler,
generated by the \texttt{lipsum} package.

The paper is organized as follows. Our main results are in
\cref{sec:main}, our new algorithm is in \cref{sec:alg}, experimental
results are in \cref{sec:experiments}, and the conclusions follow in
\cref{sec:conclusions}.

\section{Main results}
\label{sec:main}

We interleave text filler with some example theorems and theorem-like
items.

Here we state our main result as \cref{thm:bigthm}; the proof is
deferred to \cref{sec:proof}.

\begin{theorem}[$LDL^T$ Factorization \cite{GoVa13}]
  If $A \in \mathbb{R}^{n \times n}$ is symmetric and the principal
  submatrix $A(1:k,1:k)$ is nonsingular for $k=1:n-1$, then there
  exists a unit lower triangular matrix $L$ and a diagonal matrix
\begin{displaymath}
D = \diag(d_1,\dots,d_n)
\end{displaymath}
such that $A=LDL^T$. The factorization is unique.
\label{thm:bigthm}
\end{theorem}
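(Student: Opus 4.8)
The plan is to argue by induction on the size $n$, exploiting a block partition that peels off the last row and column; existence and uniqueness will be carried through the induction simultaneously. The base case $n=1$ is immediate: the scalar $A=(a_{11})$ factors as $1\cdot a_{11}\cdot 1$, this is the only such factorization, and the hypothesis on principal submatrices is vacuous.

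For the inductive step I would write $A$ in the block form
\[
A=\begin{pmatrix} B & b \\ b^\top & a_{nn}\end{pmatrix},
\]
where $B=A(1:n-1,1:n-1)$, using the symmetry of $A$ to place $b^\top$ in the lower-left block. The matrix $B$ is symmetric and its own leading principal submatrices $A(1:k,1:k)$ are nonsingular for $k=1,\dots,n-2$ by hypothesis, so the induction hypothesis supplies a unique factorization $B=\widetilde L\,\widetilde D\,\widetilde L^\top$ with $\widetilde L$ unit lower triangular and $\widetilde D=\diag(d_1,\dots,d_{n-1})$. I then look for $L,D$ of the form
\[
L=\begin{pmatrix}\widetilde L & 0 \\ \ell^\top & 1\end{pmatrix},\qquad
D=\begin{pmatrix}\widetilde D & 0 \\ 0 & d_n\end{pmatrix},
\]
and multiply out $LDL^\top$. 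The $(1,1)$ block reproduces $B$ automatically, the $(1,2)$ and $(2,1)$ blocks yield the single equation $\widetilde L\,\widetilde D\,\ell=b$ (the two being transposes of one another, consistent with symmetry), and the $(2,2)$ block yields $d_n=a_{nn}-\ell^\top\widetilde D\,\ell$.

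The key step, and the only place the hypothesis is genuinely consumed, is solving $\widetilde L\,\widetilde D\,\ell=b$ for $\ell$. Since $\widetilde L$ is unit lower triangular it is always invertible, but to isolate $\ell$ I also need $\widetilde D$ invertible. Here I would invoke the determinant identity $\det A(1:k,1:k)=\det(\widetilde L_k\widetilde D_k\widetilde L_k^\top)=d_1\cdots d_k$, valid because unit triangular factors have determinant $1$; nonsingularity of $A(1:k,1:k)$ for $k\le n-1$ then forces $d_1,\dots,d_{n-1}\neq 0$, so $\widetilde D$ is nonsingular. Consequently $\ell=\widetilde D^{-1}\widetilde L^{-1}b$ is uniquely determined, and $d_n$ is then pinned down by the $(2,2)$ equation. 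Crucially, no assumption is placed on $A(1:n,1:n)=A$ itself, so $d_n$ may vanish; this does not affect either existence or the solvability for $\ell$, which is why the theorem holds even when $A$ is singular.

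Uniqueness rides the same induction: given any two factorizations of $A$, their $(1,1)$ blocks are factorizations of $B$ and hence coincide by the inductive uniqueness ($\widetilde L,\widetilde D$ agree); the displayed equations then determine $\ell$ uniquely (using that $\widetilde D$ is nonsingular) and finally $d_n$ uniquely, so the two factorizations are identical. The main obstacle to watch is precisely this nonsingularity of $\widetilde D$: the whole argument must never require inverting $d_n$, and keeping the determinant bookkeeping straight so that exactly the first $n-1$ pivots are guaranteed nonzero is the delicate point.
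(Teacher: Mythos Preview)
Your proof is correct and follows the standard inductive argument for the $LDL^T$ factorization, essentially as presented in the cited reference \cite{GoVa13}. There is no comparison to be made with the paper, however: the statement in question is template filler from the SIAM article class example, the ``proof'' is deferred to a \verb|\cref{sec:proof}| that does not exist in the document, and the surrounding section consists of \texttt{lipsum} placeholder text. So the paper contains no proof of this theorem at all.

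On the mathematics itself, your argument is clean and the identified ``delicate point'' is exactly right: the hypothesis on $A(1{:}k,1{:}k)$ for $k\le n-1$ is consumed precisely to guarantee $d_1,\dots,d_{n-1}\neq 0$ so that $\widetilde D$ is invertible when solving for $\ell$, while $d_n$ is never inverted. One cosmetic remark: when you invoke $\det A(1{:}k,1{:}k)=d_1\cdots d_k$ you are implicitly using that the leading $k\times k$ block of the factorization $\widetilde L\widetilde D\widetilde L^\top$ of $B$ is itself the $LDL^\top$ factorization of $A(1{:}k,1{:}k)$; this is immediate from the triangular shape of $\widetilde L$, but it is worth saying explicitly since it is what makes the determinant bookkeeping work at every intermediate $k$, not just $k=n-1$.
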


\lipsum[6]

\begin{theorem}[Mean Value Theorem]\label{thm:mvt}
  Suppose $f$ is a function that is continuous on the closed interval
  $[a,b]$.  and differentiable on the open interval $(a,b)$.
  Then there exists a number $c$ such that $a < c < b$ and
  \begin{displaymath}
    f'(c) = \frac{f(b)-f(a)}{b-a}.
  \end{displaymath}
  In other words,
  \begin{displaymath}
    f(b)-f(a) = f'(c)(b-a).
  \end{displaymath}
\end{theorem}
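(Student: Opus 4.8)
The plan is to reduce the statement to Rolle's theorem by subtracting off the secant line, so that the two endpoint values coincide. First I would introduce the auxiliary function
\[
g(x) := f(x) - f(a) - \frac{f(b)-f(a)}{b-a}(x-a),
\]
which inherits continuity on $[a,b]$ and differentiability on $(a,b)$ from $f$, since it differs from $f$ only by an affine function (and $b \neq a$, so the quotient is well defined). A direct evaluation gives $g(a) = 0$ and $g(b) = f(b) - f(a) - (f(b)-f(a)) = 0$, so in particular $g(a) = g(b)$.

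Next I would apply Rolle's theorem to $g$: a function continuous on $[a,b]$, differentiable on $(a,b)$, with equal endpoint values must have a vanishing derivative at some interior point $c \in (a,b)$. Differentiating the definition of $g$ yields $g'(x) = f'(x) - \frac{f(b)-f(a)}{b-a}$, so the relation $g'(c) = 0$ rearranges immediately to $f'(c) = \frac{f(b)-f(a)}{b-a}$, which is the claim in both of its stated forms.

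If Rolle's theorem may not be assumed, the real work lies in establishing it from first principles, which I would do as follows. By the Extreme Value Theorem, $g$ attains a maximum and a minimum on the compact interval $[a,b]$. If both extrema are attained only at the endpoints, then since $g(a)=g(b)$ the function is constant, whence $g' \equiv 0$ on $(a,b)$ and any interior point serves as $c$. Otherwise at least one extremum is attained at an interior point $c$, and Fermat's interior-extremum criterion, an elementary one-sided-limit argument, forces $g'(c)=0$.

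The main obstacle is therefore not the algebraic reduction, which is routine, but the analytic foundation beneath Rolle's theorem, and specifically the Extreme Value Theorem, whose proof rests on the completeness of $\R$ (via Bolzano--Weierstrass or the least-upper-bound property). The crux is thus to verify that the stated hypotheses are exactly what is needed: continuity on the \emph{closed} interval to invoke the Extreme Value Theorem, and differentiability on the \emph{open} interval to apply Fermat's criterion at the interior point produced.
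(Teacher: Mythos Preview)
Your argument is correct and is the standard classical proof: subtract the secant line to reduce to Rolle's theorem, then justify Rolle via the Extreme Value Theorem and Fermat's interior-extremum criterion. There is nothing to compare against, however: the paper states \cref{thm:mvt} only as template filler from the SIAM article class and provides no proof of it (the proof environment that follows belongs to \cref{cor:a}, not to the Mean Value Theorem).
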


Observe that \cref{thm:bigthm,thm:mvt,cor:a} correctly mix references
to multiple labels.

\begin{corollary}\label{cor:a}
  Let $f(x)$ be continuous and differentiable everywhere. If $f(x)$
  has at least two roots, then $f'(x)$ must have at least one root.
\end{corollary}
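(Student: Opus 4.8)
The plan is to derive the statement as a direct corollary of the Mean Value Theorem (Theorem~\ref{thm:mvt}), which is precisely the tool stated immediately beforehand. First I would name two of the guaranteed roots: since $f$ has at least two roots, I can choose two distinct ones and label them so that $a < b$, with $f(a) = f(b) = 0$. The ordering is harmless and costs nothing, but it matters because the Mean Value Theorem is phrased on an interval $[a,b]$ with $a < b$.

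Next I would verify the hypotheses of Theorem~\ref{thm:mvt} on this specific interval. The assumption that $f$ is continuous and differentiable \emph{everywhere} is stronger than what is needed: it immediately yields that $f$ is continuous on the closed interval $[a,b]$ and differentiable on the open interval $(a,b)$. Hence Theorem~\ref{thm:mvt} applies and produces a number $c$ with $a < c < b$ such that
\begin{displaymath}
f'(c) = \frac{f(b)-f(a)}{b-a} = \frac{0-0}{b-a} = 0.
\end{displaymath}

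Finally I would conclude: the point $c$ is a root of $f'$, so $f'$ has at least one root, which is exactly the claim. I do not anticipate any genuine obstacle here; the only things to be careful about are ensuring the two chosen roots are distinct (so that $b - a \neq 0$ and the quotient is well defined) and correctly matching the ``everywhere'' hypothesis to the closed/open interval requirements of Theorem~\ref{thm:mvt}. Both are routine, so the proof is essentially a one-line invocation of the Mean Value Theorem with $f(a) = f(b) = 0$.
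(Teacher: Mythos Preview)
Your proposal is correct and matches the paper's proof essentially line for line: pick two distinct roots $a$ and $b$, apply Theorem~\ref{thm:mvt}, and compute $f'(c) = \frac{0-0}{b-a} = 0$. The only cosmetic difference is that you explicitly order $a<b$ and spell out the hypothesis check, which the paper leaves implicit.
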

\begin{proof}
  Let $a$ and $b$ be two distinct roots of $f$.
  By \cref{thm:mvt}, there exists a number $c$ such that
  \begin{displaymath}
    f'(c) = \frac{f(b)-f(a)}{b-a} = \frac{0-0}{b-a} = 0.
  \end{displaymath}
\end{proof}

Note that it may require two \LaTeX\ compilations for the proof marks
to show.

Display matrices can be rendered using environments from \texttt{amsmath}:
\begin{equation}\label{eq:matrices}
S=\begin{bmatrix}1&0\\0&0\end{bmatrix}
\quad\text{and}\quad
C=\begin{pmatrix}1&1&0\\1&1&0\\0&0&0\end{pmatrix}.
\end{equation}
Equation \cref{eq:matrices} shows some example matrices.

We calculate the Fr\'{e}chet derivative of $F$ as follows:
\begin{subequations}
\begin{align}
  F'(U,V)(H,K) 
  &= \langle R(U,V),H\Sigma V^{T} + U\Sigma K^{T} -
  P(H\Sigma V^{T} + U\Sigma K^{T})\rangle 
  \\
  &= \langle R(U,V),H\Sigma V^{T} + U\Sigma K^{T}\rangle 
  \nonumber \\
  &= \langle R(U,V)V\Sigma^{T},H\rangle + 
  \langle \Sigma^{T}U^{T}R(U,V),K^{T}\rangle. 
\end{align}
\end{subequations}
\Cref{eq:aa} is the first line, and \cref{eq:bb} is the last line.

\section{Algorithm}
\label{sec:alg}

\lipsum[40]

Our analysis leads to the algorithm in \cref{alg:buildtree}.

\begin{algorithm}
\caption{Build tree}
\label{alg:buildtree}
\begin{algorithmic}
\STATE{Define $P:=T:=\{ \{1\},\ldots,\{d\}$\}}
\WHILE{$\#P > 1$}
\STATE{Choose $C^\prime\in\mathcal{C}_p(P)$ with $C^\prime := \operatorname{argmin}_{C\in\mathcal{C}_p(P)} \varrho(C)$}
\STATE{Find an optimal partition tree $T_{C^\prime}$ }
\STATE{Update $P := (P{\setminus} C^\prime) \cup \{ \bigcup_{t\in C^\prime} t \}$}
\STATE{Update $T := T \cup \{ \bigcup_{t\in\tau} t : \tau\in T_{C^\prime}{\setminus} \mathcal{L}(T_{C^\prime})\}$}
\ENDWHILE
\RETURN $T$
\end{algorithmic}
\end{algorithm}

\lipsum[41]

\section{Experimental results}
\label{sec:experiments}

\lipsum[50]

\Cref{fig:testfig} shows some example results. Additional results are
available in the supplement in \cref{tab:foo}.


\Cref{tab:foo} shows additional
supporting evidence. 

\begin{table}[htbp]
\footnotesize
\caption{Example table.}\label{tab:foo}
\begin{center}
  \begin{tabular}{|c|c|c|} \hline
   Species & \bf Mean & \bf Std.~Dev. \\ \hline
    1 & 3.4 & 1.2 \\
    2 & 5.4 & 0.6 \\ 
    3 & 7.4 & 2.4 \\ 
    4 & 9.4 & 1.8 \\ \hline
  \end{tabular}
\end{center}
\end{table}

\lipsum[51]

\section{Discussion of \texorpdfstring{{\boldmath$Z=X \cup Y$}}{Z = X union Y}}

\lipsum[76]

\section{Conclusions}
\label{sec:conclusions}

Some conclusions here.

\appendix
\section{An example appendix} 
\lipsum[71]

\begin{lemma}
Test Lemma.
\end{lemma}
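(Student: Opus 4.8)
The plan is to observe first that the statement labeled ``Test Lemma'' carries no mathematical assertion whatsoever: it consists solely of the placeholder string \emph{Test Lemma}, with no hypotheses, no quantifiers, and no conclusion to establish. This is the demonstration lemma supplied by the \texttt{siamart} template to illustrate the \texttt{lemma} environment; it is not a genuine claim belonging to the paper's development on rank-1 lattices, lower sets, and the admissibility plans A/B/C. Accordingly, any honest ``proof proposal'' must begin by recording that there is, as worded, nothing to prove.

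Granting that, I would proceed by the vacuous route. First I would confirm that the surrounding context introduces no implicit standing hypotheses that the lemma might silently inherit: the preceding \texttt{lipsum[71]} paragraph is filler text and fixes no objects, definitions, or assumptions, so the statement cannot be reinterpreted as quantifying over anything previously established. Having ruled that out, the conclusion is immediate — a statement that posits no property of any object is satisfied under every interpretation, hence it holds trivially, and the correct formal argument is the empty proof.

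The single genuine obstacle here is interpretive rather than mathematical. One must decide whether the placeholder is intended to be replaced by an actual statement before the paper reaches its final form. If it is, the appropriate action is to substitute a concrete lemma with explicit hypotheses and conclusion — for instance one of the admissibility equivalences (compare Lemma~\ref{LemmaPlanBLower} or Lemma~\ref{LemmaEquivSimplex}) or one of the cardinality identities for $\#(\cM(\Lambda))$ — and then import the matching argument from the main body. Absent any such content, no nontrivial proof strategy applies, and the only defensible output is to flag the claim as a template artifact that should be removed or instantiated.
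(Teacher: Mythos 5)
Your reading matches the paper's own treatment: the ``Test Lemma'' is a bare template placeholder in the example appendix, and the paper supplies no proof for it because there is nothing to prove. Your vacuous/empty-proof resolution, together with flagging it as a template artifact to be removed or instantiated, is exactly the right disposition.
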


\section*{Acknowledgments}
We would like to acknowledge the assistance of volunteers in putting
together this example manuscript and supplement.

\bibliographystyle{siamplain}
\bibliography{references}

\begin{bibdiv}
\begin{biblist}

\bib{Padua}{article}{
      author={Bos, Len~P.},
      author={Marchi, Stefano~De},
      author={Vianello, Marco},
      author={Xu, Yuan},
       title={{Bivariate Lagrange Interpolation at the Padua Points: The Ideal Theory Approach}},
        date={2006},
     journal={Numerische Mathematik},
      volume={108},
       pages={43\ndash 57},
}

\bib{caflisch1998monte}{article}{
      author={Caflisch, Russel~E},
       title={{Monte Carlo and Quasi-Monte Carlo Methods}},
        date={1998},
     journal={Acta Numerica},
      volume={7},
       pages={1\ndash 49},
}

\bib{ChkifaLower}{article}{
      author={Chkifa, Abdellah},
      author={Cohen, Albert},
      author={Migliorati, Giovanni},
      author={Nobile, Fabio},
      author={Tempone, Ra{\'u}l},
       title={{Discrete Least Squares Polynomial Approximation with Random Evaluations: Application to Parametric and Stochastic Elliptic PDEs}},
        date={2015},
     journal={Mathematical Modelling and Numerical Analysis},
      volume={49},
       pages={815\ndash 837},
}

\bib{Chkifa2016PolynomialAV}{article}{
      author={Chkifa, Abdellah},
      author={Dexter, Nick~C.},
      author={Tran, Hoang},
      author={Webster, C.},
       title={{Polynomial Approximation via Compressed Sensing of High-Dimensional Functions on Lower Sets}},
        date={2016},
     journal={Math. Comput.},
      volume={87},
       pages={1415\ndash 1450},
}

\bib{cools2010constructing}{article}{
      author={Cools, Ronald},
      author={Kuo, Frances~Y.},
      author={Nuyens, Dirk},
       title={{Constructing Lattice Rules Based on Weighted Degree of Exactness and Worst Case Error}},
        date={2010},
     journal={Computing},
      volume={87},
       pages={63\ndash 89},
}

\bib{Cools2019FastCC}{article}{
      author={Cools, Ronald},
      author={Kuo, Frances~Y.},
      author={Nuyens, Dirk},
      author={Sloan, Ian~H.},
       title={{Fast Component-by-Component Construction of Lattice Algorithms for Multivariate Approximation with POD and SPOD Weights}},
        date={2019},
     journal={Math. Comput.},
      volume={90},
       pages={787\ndash 812},
}

\bib{Cools2016TenttransformedLR}{article}{
      author={Cools, Ronald},
      author={Kuo, Frances~Y.},
      author={Nuyens, Dirk},
      author={Suryanarayana, Gowri},
       title={{Tent-Transformed Lattice Rules for Integration and Approximation of Multivariate Non-Periodic Functions}},
        date={2016},
     journal={J. Complex.},
      volume={36},
       pages={166\ndash 181},
}

\bib{gross2021deterministic}{article}{
      author={Gross, Craig},
      author={Iwen, Mark~A.},
      author={K{\"a}mmerer, Lutz},
      author={Volkmer, Toni},
       title={{A Deterministic Algorithm for Constructing Multiple Rank-1 Lattices of Near-Optimal Size}},
        date={2021},
     journal={Advances in Comput. Math.},
      volume={47},
       pages={86},
}

\bib{Kmmerer2014ReconstructingMT}{inproceedings}{
      author={K{\"a}mmerer, Lutz},
       title={{Reconstructing Multivariate Trigonometric Polynomials from Samples Along Rank-1 Lattices}},
        date={2014},
   booktitle={{Approximation Theory XIV: San Antonio 2013}},
      editor={Fasshauer, Gregory~E.},
      editor={Schumaker, Larry~L.},
   publisher={Springer International Publishing},
     address={Cham},
       pages={255\ndash 271},
}

\bib{rank1}{article}{
      author={Kuo, Frances~Y.},
      author={Migliorati, Giovanni},
      author={Nobile, Fabio},
      author={Nuyens, Dirk},
       title={{Function Integration, Reconstruction and Approximation Using Rank-1 Lattices}},
        date={2019},
     journal={Math. Comput.},
      volume={90},
       pages={1861\ndash 1897},
         url={https://api.semanticscholar.org/CorpusID:263793689},
}

\bib{kaemmerer2015}{thesis}{
      author={Kämmerer, L.},
       title={{High Dimensional Fast Fourier Transform Based on Rank-1 Lattice Sampling}},
        type={Ph.D. Thesis},
        date={2015},
         url={https://www-user.tu-chemnitz.de/~lkae/paper/kaemmerer_diss.pdf},
}

\bib{Kammerer2020AFP}{article}{
      author={Kämmerer, Lutz},
       title={{A fast probabilistic component-by-component construction of exactly integrating rank-1 lattices and applications}},
        date={2020},
     journal={arXiv},
      eprint={2012.14263},
         url={https://arxiv.org/abs/2012.14263},
}

\bib{Potts2016SparseHF}{article}{
      author={Potts, Daniel},
      author={Volkmer, Toni},
       title={{Sparse High-Dimensional FFT Based on Rank-1 Lattice Sampling}},
        date={2016},
     journal={Applied and Computational Harmonic Analysis},
      volume={41},
       pages={713\ndash 748},
}

\end{biblist}
\end{bibdiv}


\begin{thebibliography}{1}

\bibitem{rank1}
{\sc F.~Kuo, G.~Migliorati, F.~Nobile, and D.~Nuyens}, {\em Function
  integration, reconstruction and approximation using rank-1 lattices},
  American Mathematical Society,  (2019).

\end{thebibliography}
\end{document}